\documentclass[12pt,regno]{amsart}
\usepackage{epsf,epsfig,amsmath}
\usepackage{amssymb,latexsym}
\usepackage{tikz}
\usepackage{amsthm} 
\usepackage{amsfonts}
\usepackage{graphicx,psfrag}

\numberwithin{equation}{section}

\textwidth15.6cm 
\textheight22cm 
\hoffset-1.7cm 
\voffset-.5cm

\newtheorem{thm}{Theorem}[section]
\newtheorem*{thm*}{Theorem}
\newtheorem{pro}[thm]{Proposition}
\newtheorem{lem}[thm]{Lemma}

\newtheorem{cor}[thm]{Corollary}

\theoremstyle{definition}
\newtheorem{defi}[thm]{Definition}
\newtheorem{exa}[thm]{Example}
\theoremstyle{remark}

\newtheorem{rem}[thm]{Remark}
\title[A characterization of special matchings]{A simple characterization of special matchings in lower Bruhat intervals}

\author{Fabrizio Caselli}\author{Mario Marietti}

\address{Fabrizio Caselli, Dipartimento di matematica, Universit\`a di Bologna, Piazza di Porta San Donato 5, 40126 Bologna, Italy}
\address{Mario Marietti, Dipartimento  di Ingegneria Industriale e Scienze Matematiche, Universit\`a Politecnica delle Marche, Via Brecce Bianche, 60131 Ancona,  Italy}

\email{fabrizio.caselli@unibo.it}
\email{m.marietti@univpm.it}

\subjclass[2010]{05E99 (primary), 20F55 (secondary)}
\keywords{Bruhat order, Coxeter group, Special matching}

\begin{document}

\maketitle

\begin{abstract}
We give  a simple characterization of special matchings in lower Bruhat intervals (that is, intervals starting from the identity element) of a Coxeter group. As a byproduct, we obtain some results on the action of special matchings. 
\end{abstract}

\section{Introduction}

Over the last few years, special matchings have shown many applications in Coxeter group theory in general (see \cite{MJaco}, \cite{MSiam}) and in the computation of  Kazhdan--Lusztig polynomials in particular (see \cite{Brepr}, \cite{BCM1}, \cite{BCM2}, \cite{Mtrans}, \cite{Mprepr}, \cite{Tel}).

The main achievements obtained by using special matchings are in the problem of the combinatorial invariance of Kazhdan--Lusztig polynomials. These famous polynomials, introduced by Kazhdan and Lusztig in \cite{K-L}, are polynomials indexed by pairs of elements $u,v$ in a Coxeter group, with $u\leq v$ under Bruhat order. In the 80's, Lusztig in private and, independently,  Dyer \cite{Dyeth} have conjectured that the Kazhdan--Lusztig polynomial $P_{u,v}(q)$ only depends on the combinatorial structure of the interval $[u,v]$ (that is, on the isomorphism class of $[u,v]$ as a poset). This conjecture is often referred to as the Combinatorial Invariance Conjecture of Kazhdan--Lusztig polynomials. 

Over the last 15 years, new results about the Combinatorial Invariance Conjecture (and its generalization to the parabolic setting) for lower intervals, that is, intervals starting from the identity element, were obtained by proving a recursive formula for Kazhdan--Lusztig polynomials which depends on special matchings. This result was first obtained for the ordinary Kazhdan--Lusztig polynomials of the symmetric group \cite{Brepr}, then for the ordinary Kazhdan--Lusztig polynomials of an arbitrary Coxeter group \cite{BCM1} (see also \cite{Del}), then for the parabolic  Kazhdan--Lusztig polynomials of a doubly-laced Coxeter group \cite{Mtrans}, and very recently for the parabolic  Kazhdan--Lusztig polynomials of an arbitrary Coxeter group \cite{Mprepr}. Each time, these improvements were made possible by a better control on the special matchings of Coxeter groups. The impression is that a further understanding of special matchings of Coxeter groups might bring to other results on the combinatorial invariance of Kazhdan--Lusztig polynomials.

The purpose of this paper is to give a simple characterization of special matchings of lower intervals in any arbitrary Coxeter group.

With respect to the classification given in \cite{CM}, the present classification has the advantages of being more compact and simpler. It is also aesthetically pleasing to have only one self-dual type of system instead of two types of systems, one dual to the other. Furthermore, we provide a simpler  description of the action of a special matching, which is easy to work with. More precisely, given a special matching $M$ of an element $w\in W$, we  show that, for all $u \in W$ with $u\leq w$, the element $M(u)$ can be computed from any factorization of $u$ satisfying certain hypotheses, while in \cite{CM} we could compute $M(u)$ only starting from  one specific factorization.

As an application of the classification in this paper, we give a new proof of the Combinatorial Invariance Conjecture for lower Bruhat intervals in any Coxeter group, which is shorter then the original one in \cite{BCM1}.

\section{Statements of main results}\label{sor}
We fix an arbitrary Coxeter system $(W,S)$. 

For $w\in W$,  the \emph{support} of $w$, denoted  $\textrm{Supp}(w)$, is $\{s\in S \colon\,  s\leq w\}$. For $H\subseteq S$, we let $\textrm{Supp}_H(w)$ denote the intersection $\textrm{Supp}(w)\cap H$. For $s\in S$, we denote by $C_s$ the set of generators commuting with $s$, that is $\{c\in S \colon\, sc=cs\}$. For $w\in W$ and $H\subseteq S$, we denote by $w_0(H)$ the longest element of $[e,w]\cap W_H$.
\begin{defi}
Let $s\in S$ and $J,K\subseteq S$. We say that $J$ and $K$ are \emph{$s$-complementary} if $J\cup K=S$ and $J\cap K=C_s$. We also say that $K$ is the \emph{$s$-complement} of $J$, and vice versa.
\end{defi}

\begin{defi}
\label{sistema}
A \emph{system} $\mathcal S$ for $w\in W$ is a triple $(J,H,M)$ with $H\subseteq S$, $|H| \in \{1,2\}$, $M$  a matching of the Hasse diagram of $[e,w_0(H)]$, and $C_{M(e)}\subseteq J\subseteq S$, such that
 
\begin{itemize}
 \item [[S0]] $M$ is a multiplication matching (if and) only if $|H|=1$;
 \item [[S1]] $w^J\in W_K$ (equivalently $^Kw\in W_J$), where $K$ is the $M(e)$-complement of $J$;
 \item [[S2]]  
 \begin{itemize}
 \item $|\textrm{Supp}_H((w^J)^H)|\leq 1$ and if $\alpha\in \textrm{Supp}_H((w^J)^H)$ then $M$ commutes with $\lambda_\alpha$,
 \item $|\textrm{Supp}_H(^H(^Kw))|\leq 1$ and if $\beta \in  \textrm{Supp}_H(^H(^Kw))$ then $M$ commutes with $\rho_\beta$.
\end{itemize}
\end{itemize}
\end{defi}

\begin{defi} 
 Let $\mathcal S$ be a system $(J,H,M)$ for $w$, and $u\in W$. An \emph{$\mathcal S$-factorization} of $u$ is a factorization of $u=a \cdot b  \cdot c$, for some $a,b,c\in W$ satisfying the following properties:
 \begin{itemize}
  \item $\ell(u)=\ell(a)+\ell(b)+\ell(c)$;
  \item $a \in W_{K}\cap W^H$, $|\textrm{Supp}_H(a)|\leq 1$, and if $\alpha \in \textrm{Supp}_H(a)$ then $M$ commutes with $\lambda_{\alpha}$;
  \item $b\in W_H$;
  \item $c\in W_J \cap  \,^HW$, $|\textrm{Supp}_H(c)|\leq 1$, and if $\beta \in \textrm{Supp}_H(c)$ then $M$ commutes with $\rho_{\beta}$.
\end{itemize}
\end{defi}

In the sequel, we prove that, to each system $\mathcal S=(J,H,M)$, we can attach a special matching $M_{\mathcal S}$ of $w$ by letting, for all $u\in W$ with $u\leq w$
$$M_{\mathcal S}(u)= a \cdot M(b)  \cdot c$$
where $u=a\cdot b \cdot c$ is an arbitrary $\mathcal S$-factorization. While it is easy to see that $\mathcal S$-factorizations exist for all $u\in W$ with $u\leq w$, the fact that $M_{\mathcal S}$ does not depend on the chosen $\mathcal S$-factorization and is indeed a matching of $w$ (when $M_{\mathcal S}(w)\lhd w$) is a key point in the sequel. 

The main results of this work are collected in the following.
\begin{thm*}
\begin{itemize}
Let $(W,S)$ be an arbitrary Coxeter system and $w$ be an arbitrary element of $ W$.
\item
If $M$ is a special matching of $w$, then there exists a system $\mathcal S$ for $w$ such that $M=M_{\mathcal S}$. 
\item  
 Vice versa, if $\mathcal S$ is a system for $w$ such that $M_\mathcal S(w)\lhd w$, then $M_\mathcal S$ is a special matching for $w$.
 \item 
If $M$ is a special matching of $w$ associated with  a system $\mathcal S$, $w=a\cdot b\cdot c$ is an $\mathcal S$-factorization of $w$, and $u$ is an element  smaller than $w$, then 
$$M(u)=a' M_\mathcal S(b') c',$$
for all factorizations $u=a' \cdot b' \cdot c'$ of  $u$ such that $a'\leq a$, $b'\leq b$, $c'\leq c$, and $\ell(u)=\ell(a')+\ell(b')+\ell(c')$.

\item  
Moreover,  let $SM_w$ be the set of systems $\mathcal S=(J,H,M)$ for $w$ such that
\begin{itemize}
\item  $M_\mathcal S(w)\lhd w$, 
\item for all $r\in S$,  
$$ M_{\mathcal S}(r)=sr \text{ if and only if } r\in J.$$
\end{itemize}
  Then
the set 
 \[
 \{ M_{\mathcal S}: \mathcal S\in SM_w\}
 \]
is a complete list of all distinct special matchings of $w$.
\end{itemize}
\end{thm*}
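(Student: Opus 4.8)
The plan is to prove the four bullets of the Theorem in order, building each on the previous ones. For the first bullet, let $M$ be a special matching of $w$. Following the strategy of the classification in \cite{CM}, I would first analyze how $M$ acts on the atoms of $[e,w]$: set $s=M(e)$, let $J=\{r\in S\colon M(r)=sr\}$ (so in particular $s\notin J$ and every $r\in S$ with $sr=rs$ lies in $J$, because for such $r$ the matching $M$ restricted to $[e,sr]$ is forced, giving $C_s\subseteq J$), and let $K$ be the $s$-complement of $J$. The set $H$ and the small matching $M$ (reusing the letter, as the Definition does) recording the ``local'' behaviour near the top are extracted by examining the maximal parabolic pieces on which $M$ fails to be a multiplication: one shows $|H|\le 2$ and that $M|_{[e,w_0(H)]}$ is a matching, with [S0] holding by the dichotomy multiplication/non-multiplication. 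Properties [S1] and [S2] are then consequences of the compatibility of $M$ with left and right quotient decompositions; this is essentially a repackaging of the structural lemmas that must have been proved in the body of the paper before this section, so I would cite them. Once $(J,H,M)$ is shown to be a system, one verifies $M=M_{\mathcal S}$ by checking the defining formula $M(u)=a\cdot M(b)\cdot c$ on $\mathcal S$-factorizations; this uses that $M$, being special, commutes with the relevant $\lambda_\alpha,\rho_\beta$ and lifts the action on $[e,w_0(H)]$ to all of $[e,w]$ via the factorization.

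For the second bullet, assume $\mathcal S=(J,H,M)$ is a system with $M_{\mathcal S}(w)\lhd w$. The content here is: (a) $M_{\mathcal S}$ is well defined, i.e.\ independent of the chosen $\mathcal S$-factorization; (b) $M_{\mathcal S}$ is a matching of the Hasse diagram of $[e,w]$; (c) it is special, i.e.\ for every edge $\{u,v\}$ of $[e,w]$ with $u\lhd v$ one has $M_{\mathcal S}(u)\ne M_{\mathcal S}(v)$ and $M_{\mathcal S}(u)\le M_{\mathcal S}(v)$. For (a) I would argue that any two $\mathcal S$-factorizations of $u$ are connected by ``elementary moves'' that slide commuting generators across the three blocks $W_K\cap W^H$, $W_H$, $W_J\cap{}^HW$; the hypotheses on supports in $H$ and the commutations with $\lambda_\alpha,\rho_\beta$ guarantee each move leaves $a\cdot M(b)\cdot c$ unchanged — here property [S1], which forces $w^J\in W_K$, is what makes the three-block decomposition rigid enough. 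For (b), length-additivity in the factorization plus the fact that $M$ is a matching of $[e,w_0(H)]$ give that $M_{\mathcal S}$ is a fixed-point-free involution; that it respects Hasse edges (changes length by one) follows since $M(b)$ differs from $b$ by exactly one in length. Part (c), the ``lifting'' property, is where [S0] and [S2] do their work: for a multiplication matching ($|H|=1$) the claim reduces to a known fact about left/right multiplication matchings, while for $|H|=2$ one reduces, via the factorization, to verifying the special-matching axiom for $M$ itself on the dihedral (or near-dihedral) interval $[e,w_0(H)]$, which is a finite check.

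The third bullet is really a strengthening of the formula $M(u)=a\cdot M(b)\cdot c$: it says $M=M_{\mathcal S}$ may be computed not only from an $\mathcal S$-factorization of $u$ but from \emph{any} factorization $u=a'\cdot b'\cdot c'$ dominated block-by-block by a fixed $\mathcal S$-factorization $w=a\cdot b\cdot c$ of $w$, with $a'\le a$, $b'\le b$, $c'\le c$ and lengths adding up. The plan is: first observe that such a factorization \emph{is} an $\mathcal S$-factorization of $u$ — one must check $a'\in W_K\cap W^H$ with $|\mathrm{Supp}_H(a')|\le 1$ (and the $\lambda_\alpha$-commutation), $b'\in W_H$, $c'\in W_J\cap{}^HW$ (with the $\rho_\beta$-commutation); each of these is inherited from the corresponding property of $a$, $b$, $c$ because the relevant parabolic subgroups and support conditions are closed under passing to Bruhat-smaller elements, and because $\mathrm{Supp}_H(a')\subseteq\mathrm{Supp}_H(a)$. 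Then apply the formula from the second bullet, noting $M_{\mathcal S}(b')$ makes sense as $b'\le b\le w_0(H)$. So the third bullet follows once the first two are in place, essentially for free; the only subtlety is confirming that domination of blocks really does imply the support and parabolic constraints transfer, which I would state as a short lemma.

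For the fourth bullet, one must show $\{M_{\mathcal S}\colon\mathcal S\in SM_w\}$ is \emph{complete} and \emph{without repetition}. Completeness is immediate from the first bullet: every special matching $M$ equals $M_{\mathcal S}$ for some system $\mathcal S$, and by definition of $J$ in the first-bullet construction one has $M(r)=sr\iff r\in J$, while $M_{\mathcal S}(w)=M(w)\lhd w$ since $M$ is a matching of $w$; hence $\mathcal S\in SM_w$. For ``without repetition'', suppose $\mathcal S=(J,H,M)$ and $\mathcal S'=(J',H',M')$ lie in $SM_w$ with $M_{\mathcal S}=M_{\mathcal S'}=:\widetilde M$. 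Evaluating on atoms, $s=M_{\mathcal S}(e)=M_{\mathcal S'}(e)=s'$, and the condition $M_{\mathcal S}(r)=sr\iff r\in J$ forces $J=J'$ (both equal $\{r\colon\widetilde M(r)=sr\}$), hence also $K=K'$. It remains to recover $H$ and $M$ from $\widetilde M$. I expect this to be the main obstacle: one must show that the ``local'' data $H$ and the matching $M$ on $[e,w_0(H)]$ are uniquely determined by the global matching $\widetilde M$. The idea is to identify $w_0(H)$ intrinsically — e.g.\ as a specific element built from $\widetilde M$-orbits of atoms and from where $\widetilde M$ stops behaving like a multiplication — and then recover $M=\widetilde M|_{[e,w_0(H)]}$ by restriction, using [S0] to separate the $|H|=1$ from the $|H|=2$ case. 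Once $H=H'$ and $M=M'$ on $[e,w_0(H)]$, the systems coincide. I would isolate this uniqueness argument as the one genuinely new ingredient, since the rest of the fourth bullet is bookkeeping on top of the first three.
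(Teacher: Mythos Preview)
Your approach diverges substantially from the paper's, and there is a concrete error in the third bullet.

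For bullets one, two, and four, the paper does \emph{not} argue directly from the axioms of a system. It bootstraps from the prior classification in \cite{CM} via right and left systems (recorded here as Theorem~\ref{caratteri}). Given a special matching $M$, one already knows from \cite{CM} that $M=M_{\mathcal R}$ or $M={}_{\mathcal L}M$ for some right or left system, and one simply repackages that quadruple as a triple $\mathcal S=(J,H,M)$; conversely, given a system $\mathcal S$ with $M_{\mathcal S}(w)\lhd w$, one uses Proposition~\ref{elba} to normalize whether $t\in J$, converts back to a right or left system, and invokes \cite{CM} for specialness. The distinctness in bullet four likewise comes from parts (3)--(4) of Theorem~\ref{caratteri}. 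The technical work in Section~4 of the paper (Lemma~\ref{capraia1}, Proposition~\ref{weakdefiniscematching}, Theorem~\ref{fa22}) establishes well-definedness of $M_{\mathcal S}$ and independence of the $\mathcal S$-factorization, but \emph{not} specialness directly. Your direct attack on specialness (``reduce via the factorization to $M$ on the dihedral interval'') would require showing that a covering $u\lhd v$ in $[e,w]$ yields compatible $\mathcal S$-factorizations of $u$ and $v$ differing only in the middle block; you have not indicated how to do this, and it is not obvious.

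The genuine gap is in your third bullet. You assert that if $a'\le a$ with $a\in W_K\cap W^H$ then $a'\in W_K\cap W^H$, so that $u=a'\cdot b'\cdot c'$ is already an $\mathcal S$-factorization. Membership in $W_K$ is indeed inherited (supports shrink under Bruhat order), and so is $|\mathrm{Supp}_H(a')|\le 1$; but membership in $W^H$ is \emph{not} closed downward in Bruhat order. For example, with $H=\{s\}$ and $m_{r,s}\ge 4$ one has $rsr\in W^{\{s\}}$ yet $rs\le rsr$ and $rs\notin W^{\{s\}}$. The same failure afflicts $c'\in{}^HW$. The paper's argument (Theorem~\ref{montecristo}) repairs this: one sets $\eta$ to be the unique element of $H\cap D_R(a')$ if nonempty (else $\eta=e$), and similarly $\varepsilon$ from $H\cap D_L(c')$; then $(a'\eta)\cdot(\eta b'\varepsilon)\cdot(\varepsilon c')$ \emph{is} an $\mathcal S$-factorization (here the support bound $|\mathrm{Supp}_H(a')|\le 1$ is what makes a single correction suffice), and the commutation of $M$ with $\lambda_\eta$ and $\rho_\varepsilon$, which is inherited from the hypotheses on $a$ and $c$, gives $a'\eta\cdot M(\eta b'\varepsilon)\cdot\varepsilon c'=a'\cdot M(b')\cdot c'$. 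That correction step is the missing idea in your sketch.
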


\section{Notation, definitions and background}

In this section, we collect  some notation, definitions,
 and results that are used in the rest of this work.

We  follow \cite[Chapter 3]{StaEC1} for undefined notation and 
terminology concerning partially ordered sets. In particular,
 given $x$ and $y$ in a partially ordered set  $P$, we say that $y$ {\em covers} $x$ and we write $x \lhd y$ if the interval $ \{ z \in P\colon\, x \leq z \leq y \}$, denoted $[x,y]$, has two elements,  $x$ and $y$. We say that a poset $P$ is {\em graded} if $P$ has a minimum and there is a function
$\rho : P \rightarrow \mathbb N$ such that $\rho (\hat{0})=0$ and $\rho (y) =\rho (x)
+1$ for all $x,y \in P$ with $x \lhd y$. 
(This definition is slightly different from the one given in \cite{StaEC1}, but is
more convenient for our purposes.) 
We  call $\rho $ the {\em rank function}
of $P$.
The {\em Hasse diagram} of $P$ is any drawing of the graph having $P$ as vertex set and $ \{ \{ x,y \} \in \binom {P}{2} \colon\,  \text{ either $x \lhd y$ or $y \lhd x$} \}$ as edge set, with the convention that, if $x \lhd y$, then the edge $\{x,y\}$ goes upward from $x$ to $y$. When no confusion arises, we make no distinction between the Hasse diagram and its underlying graph.

A {\em matching} of a poset \( P \) is an involution
\( M \colon\, P\rightarrow P \) such that \( \{v,M(v)\}\) is an edge in the Hasse diagram of $P$, for all \( v\in P \).
A matching \( M \) of \( P \) is {\em special} if\[
u\lhd v\Longrightarrow M(u)\leq M(v),\]
 for all \( u,v\in P \) such that \( M(u)\neq v \).

The following result (see \cite[Lemma~2.6]{MJaco}) is an immediate generalization  of the well-known Lifting Property for Coxeter groups (see, for example, \cite[Proposition~2.2.7]{BB} or \cite[Proposition~5.9]{Hum}).
\begin{lem}
\label{lpfsm}
 (Lifting Property for special matchings) Let $M$ be a special matching of
a locally finite ranked poset $P$, and let $u, v \in P$, with $u \leq  v$. Then
\begin{enumerate}
\item if $M(v) \lhd v$ and $M(u) \lhd u$ then $M(u) \leq M(v)$,
\item if  $M(v) \rhd v$ and $M(u) \rhd u$ then $M(u) \leq M(v)$,
\item if $M(v) \lhd v$ and $M(u) \rhd u$ then $M(u) \leq v$ and $u \leq M(v)$.
\end{enumerate}
\end{lem}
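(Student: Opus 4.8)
The plan is to prove the three statements \emph{together}, by induction on $d:=\rho(v)-\rho(u)$, where $\rho$ is the rank function of $P$ (note that $u\le v$ forces $d\ge 0$, and that $[u,v]$ is finite by local finiteness). The whole content of the lemma is to bootstrap from the single instance $u\lhd v$, which is essentially the definition of a special matching, up to an arbitrary pair $u\le v$.

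\emph{Base cases.} For $d=0$ we have $u=v$ and there is nothing to prove. For $d=1$ we have $u\lhd v$, and in each of the three cases a glance at ranks shows either that $M(u)\ne v$ (in which case the defining property of a special matching applied to the cover $u\lhd v$ gives the conclusion) or that $M(u)=v$ is forced: for example, in case (3), if $M(u)\ne v$ then specialness would give $M(u)\le M(v)$, which is absurd because $\rho(M(u))=\rho(u)+1=\rho(v)>\rho(v)-1=\rho(M(v))$; hence $M(u)=v$, so $u=M(v)$, and both conclusions follow at once.

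\emph{Inductive step $(d\ge 2)$.} Pick a minimal element $z$ of the nonempty finite poset $(u,v]$, so that $u\lhd z\le v$ and both pairs $(u,z)$ and $(z,v)$ have rank difference strictly smaller than $d$; the inductive hypothesis (in all three forms) is therefore available for both pairs. Now split according to whether $M(z)\lhd z$ or $M(z)\rhd z$. When $M(z)\rhd z$, one combines the inductive hypothesis on $(z,v)$ with the defining property of $M$ applied to the cover $u\lhd z$ and a short chain of inequalities (e.g.\ for case (1): the inductive hypothesis (3) on $(z,v)$ gives $z\le M(v)$, whence $M(u)\lhd u\le z\le M(v)$). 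When $M(z)\lhd z$, a rank count forces $M(u)=z$ — otherwise specialness on $u\lhd z$ would give $M(u)\le M(z)$ while $\rho(M(u))=\rho(u)+1>\rho(z)-1=\rho(M(z))$ — and then the conclusion drops out of the inductive hypothesis on $(z,v)$ together with $v\lhd M(v)$ or $M(v)\lhd v$ as appropriate. Carrying this out in the six resulting sub-cases is routine inequality chasing; the point to watch is which of the three statements one invokes — proving (1) uses statement (3) for $(z,v)$, proving (3) uses statement (1) for $(z,v)$, and so on — so the three statements genuinely have to be run through the induction at the same time.

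The main obstacle, then, is not any single computation but organizing the simultaneous induction so that the three cases feed each other correctly, together with spotting the rank-forcing argument that pins down $M(u)=z$ whenever $M$ lowers $z$; everything else is of the same flavour as the proof of the Lifting Property for Coxeter groups.
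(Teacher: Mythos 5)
Your proof is correct, and it is essentially the standard argument: the paper itself gives no proof of this lemma but cites \cite[Lemma~2.6]{MJaco}, and the simultaneous induction on $\rho(v)-\rho(u)$, passing through a cover $u\lhd z\le v$ and splitting on whether $M$ raises or lowers $z$, is exactly the route behind that reference (and behind the classical Lifting Property it generalizes). One small imprecision to fix when writing out the six sub-cases: the rank count forcing $M(u)=z$ when $M(z)\lhd z$ uses $\rho(M(u))=\rho(u)+1$ and hence only applies when $M(u)\rhd u$ (cases (2) and (3)); in case (1) one instead has $M(u)\neq z$ for rank reasons and concludes via $M(u)\le M(z)\le M(v)$, using specialness on the cover $u\lhd z$ and statement (1) for the pair $(z,v)$ --- which is indeed the routine chasing you allude to.
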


We follow \cite{BB} for undefined Coxeter groups notation
and terminology. 

Given a Coxeter system $(W,S)$ and $s,r\in S$, we denote by $m_{s,r}$ the order of the product $sr$. Given $w \in W$, we denote by $\ell
(w )$ the length of $w $ with respect to $S$, and by $  D_{R}(w )$ and $  D_{L}(w )$ the sets  $ \{ s \in S \colon\, 
\ell(w s) < \ell(w ) \}$ and 
$ \{ s \in S \colon\,  \ell(sw)<\ell(w)\}$, respectively. We call the elements of $ D_R(w )$ and $D_{L}(w )$, respectively, the {\em right descents} and the 
{\em left descents} of $w $.
We denote by $e$
the identity of $W$, and we let $T =\{ w s w ^{-1} \colon\,  w \in W, \; s \in S \}$ be the set of {\em reflections} of $W$.

The Coxeter group $W$ is partially ordered by {\em Bruhat order} (see, for example,  \cite[\S 2.1]{BB} or \cite[\S 5.9]{Hum}), which is denoted by $\leq$. The Bruhat order is the partial order whose  covering relation $\lhd$ is as follows: if $u,v \in W$, then $u \lhd v$ if and only if $u^{-1}v \in T$ and $\ell (u)=\ell (v)-1$. 
There is a well-known characterization of Bruhat order
on a Coxeter group (usually referred to as the {\em Subword Property})
that we  use repeatedly in this work,
often without explicit mention. We recall it here for the reader's
convenience. 

By a {\em subword} of a word $s_{1}\textrm{-}s_{2} \textrm{-} \cdots \textrm{-} s_{q}$ (where we use the symbol ``$\textrm{-}$'' to separate letters in a word in the alphabet $S$) we mean
a word of the form
$s_{i_{1}}\textrm{-} s_{i_{2}}\textrm{-} \cdots \textrm{-} s_{i_{k}}$, where $1 \leq i_{1}< \cdots
< i_{k} \leq q$.
If $w\in W$ then a {\em reduced expression} for $w$ is a word $s_1\textrm{-}s_2\textrm{-}\cdots \textrm{-}s_q$ such that $w=s_1s_2\cdots s_q$ and $\ell(w)=q$. When no confusion arises, we also say in this case that $s_1s_2\cdots s_q$ is a reduced expression for $w$. 
\begin{thm}[Subword Property]
\label{subword}
Let $u,w \in W$. Then the following are equivalent:
\begin{itemize}
\item $u \leq w$ in the Bruhat order,
\item  every reduced expression for $w$ has a subword that is 
a reduced expression for $u$,
\item there exists a  reduced expression for $w$ having a subword that is 
a reduced expression for $u$.
\end{itemize}
\end{thm}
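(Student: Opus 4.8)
This statement is the classical Subword Property of the Bruhat order (see \cite[Ch.~2]{BB} or \cite[\S5.10]{Hum}), so it would suffice to cite it; since a proof is wanted, I sketch a self-contained one. The common mechanism is an induction on $\ell(w)$: one peels the last letter off a reduced expression $s_1\cdots s_q$ of $w$ and applies the inductive hypothesis to the shorter element $w'=s_1\cdots s_{q-1}=ws_q$. Observe at the outset that $w^{-1}w'=s_q$ is a reflection and $\ell(w')=\ell(w)-1$, so $w'\lhd w$; that ``second bullet $\Rightarrow$ third bullet'' is immediate because every element of $W$ has a reduced expression; and that the base case $q=0$ of each induction is trivial.

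For ``third bullet $\Rightarrow$ first bullet'', I take a reduced expression $s_1\cdots s_q$ of $w$ with a subword $s_{i_1}\cdots s_{i_k}$ that is a reduced expression of $u$, write $s=s_q$, and split according to whether the last letter $s_q$ is used. If $i_k<q$, the subword lies inside $s_1\cdots s_{q-1}$, so by induction $u\le w'$, whence $u\le w'\lhd w$. If $i_k=q$, then $s_{i_1}\cdots s_{i_{k-1}}$ is a reduced expression of $us$ (of length $k-1$, since the longer word is reduced) contained in $s_1\cdots s_{q-1}$; hence $us\le w'$ by induction, and since $us\le w'\lhd w$ and $s\in D_R(w)$, the first assertion of the Lifting Property \cite[Proposition~2.2.7]{BB} gives $u=(us)s\le w$.

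For ``first bullet $\Rightarrow$ second bullet'' — the only substantial step, since a suitable subword must be exhibited inside \emph{every} reduced expression of $w$ — I assume $u<w$ (the case $u=w$ being trivial), fix an arbitrary reduced expression $s_1\cdots s_q$ of $w$, and keep $s=s_q$, $w'=ws$. If $s\in D_R(u)$, the Lifting Property gives $us\le ws=w'$, so by the inductive hypothesis $s_1\cdots s_{q-1}$ contains a subword that is a reduced expression of $us$; appending the letter $s_q$ to it yields a subword of $s_1\cdots s_q$ of length $\ell(us)+1=\ell(u)$, i.e. a reduced expression of $(us)s=u$. If $s\notin D_R(u)$, then from $u<w$ and $s\in D_R(w)$ the Lifting Property gives $u\le ws=w'$ (its first assertion yields $us\le w$, then its second yields $u=(us)s\le ws$); by the inductive hypothesis $s_1\cdots s_{q-1}$ already contains a subword that is a reduced expression of $u$, and the same subword sits inside $s_1\cdots s_q$.

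The main obstacle is this last implication, whose only genuinely nontrivial ingredient is the Lifting Property \cite[Proposition~2.2.7]{BB}; since it is used as a black box, one need only observe that it is available independently of the Subword Property, being a consequence of the strong exchange property \cite[Ch.~1]{BB}.
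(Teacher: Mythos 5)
Your proof is correct and is essentially the standard argument that the paper itself defers to: the paper gives no proof of Theorem~\ref{subword}, citing \cite[\S 2.2]{BB} and \cite[\S 5.10]{Hum} instead, and your induction on $\ell(w)$ via the Lifting Property is precisely the proof in \cite[\S 5.10]{Hum}. Your closing remark is the one point that genuinely needs making --- namely that the Lifting Property must be obtained independently of the Subword Property (from the strong exchange condition, as in \cite{Hum}), since \cite{BB} derives its Proposition~2.2.7 \emph{from} the Subword Property and quoting it from there as a black box would be circular.
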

A proof of the preceding result can be found in \cite[\S 2.2]{BB} or \cite[\S 5.10]{Hum}. 
It is well known that $W$, partially ordered by Bruhat order, is a graded poset having $\ell$ as its rank function.

We recall that two reduced expressions of an element are always linked by a sequence of {\em braid moves}, where a braid move consists in substituting a factor $s\textrm{-}t\textrm{-}s\textrm{-} \cdots$ ($m_{s,t}$ letters) with a factor $t\textrm{-}s\textrm{-}t\textrm{-} \cdots$ ($m_{s,t}$ letters), for some $s,t\in S$. We also recall that, if $w\in W$ and $s,t\in D_R(w)$, then there exists a reduced expression for $w$ of the form $s_1\textrm{-}\cdots \textrm{-} s_k\textrm{-}\underbrace{s\textrm{-}t\textrm{-}s\textrm{-} \cdots}_{m_{s,t}\textrm { letters}}$.

Given $u,v\in W$, we write $u\cdot v$ instead of simply $uv$ when $\ell(uv)=\ell(u)+\ell(v)$ and we want to stress this additivity. On the other hand, when we write $uv$, we mean that $\ell(uv)$ can be either $ \ell(u)+\ell(v)$ or smaller.

For each subset $J\subseteq S$, we denote by  $W_J $ the parabolic subgroup of $W$ generated by $J$, and by  $W^{J}$ the set of minimal coset representatives:
$$W^{J} =\{ w \in W \colon\,  D_{R}(w)\subseteq S\setminus J \}.$$
The following is a useful factorization of $W$ (see, for example,  \cite[\S 2.4]{BB} or \cite[\S 1.10]{Hum}).
\begin{pro}
\label{fattorizzo}
If  $J \subseteq S$, then
\begin{enumerate}
\item[(i)] 
every $w \in W$ has a unique factorization $w=w^{J}  w_{J}$ 
with $w^{J} \in W^{J}$ and $w_{J} \in W_{J}$;
\item[(ii)] for this factorization, $\ell(w)=\ell(w^{J})+\ell(w_{J})$.
\end{enumerate}
\end{pro}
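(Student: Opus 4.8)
\medskip
\noindent\textit{Proof proposal.} The plan is to run everything --- existence, length-additivity \emph{and} uniqueness --- through a single object: the element of minimal length of the right coset $wW_J$. So, first I would fix $w\in W$, note that $\{\ell(g)\colon g\in wW_J\}$ is a nonempty subset of $\mathbb N$ and hence has a least element $\ell_0$, let $w^J$ be an element of $wW_J$ with $\ell(w^J)=\ell_0$, and put $w_J:=(w^J)^{-1}w\in W_J$, so that $w=w^Jw_J$. A one-line argument then shows $w^J\in W^J$: if $s\in D_R(w^J)\cap J$, then $w^Js\in wW_J$ (as $s\in W_J$) and $\ell(w^Js)<\ell(w^J)$, against minimality. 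This already produces a factorization of the type required in (i). The whole of (i) and (ii) will be deduced from the following no-cancellation statement, which I will call $(\star)$:
\[
(\star)\qquad\text{if }u\text{ has minimal length in }uW_J,\text{ then }\ell(uv)=\ell(u)+\ell(v)\text{ for all }v\in W_J.
\]

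\smallskip
\noindent I would establish $(\star)$ by contradiction. Assume $v\in W_J\setminus\{e\}$ has $\ell(uv)<\ell(u)+\ell(v)$, and fix reduced expressions $u=r_1\cdots r_p$ and $v=s_1\cdots s_q$ with every $s_j\in J$. The word $r_1\cdots r_p\,s_1\cdots s_q$ is then not reduced; let $m\le q$ be smallest with $r_1\cdots r_p\,s_1\cdots s_m$ not reduced, and put $z:=u\,s_1\cdots s_{m-1}$, whose reduced expression $r_1\cdots r_p\,s_1\cdots s_{m-1}$ becomes non-reduced upon appending $s_m$, forcing $\ell(zs_m)=\ell(z)-1$, i.e. $zs_m\lhd z$. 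By the Subword Property (Theorem \ref{subword}), $zs_m$ is obtained from this reduced expression of $z$ by deleting a single letter; that letter must be one of $r_1,\dots,r_p$, since deleting some $s_j$ ($j<m$) instead would make $s_1\cdots s_m$ --- a prefix of the reduced expression $s_1\cdots s_q$ of $v$, hence reduced --- non-reduced. Say $r_i$ is deleted: then $zs_m=r_1\cdots\widehat{r_i}\cdots r_p\,s_1\cdots s_{m-1}$, and right-multiplying by $s_{m-1}\cdots s_1$ and using $z=u\,s_1\cdots s_{m-1}$ gives
\[
u\,\bigl(s_1\cdots s_{m-1}\,s_m\,s_{m-1}\cdots s_1\bigr)=r_1\cdots\widehat{r_i}\cdots r_p.
\]
The left side lies in $uW_J$ (its second factor is a product of elements of $J$), while the right side has length $\le p-1<\ell(u)$: this contradicts the minimality of $\ell(u)$ in $uW_J$, and $(\star)$ follows.

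\smallskip
\noindent The rest is routine. Applying $(\star)$ to $u=w^J$ and $v=w_J$ gives $\ell(w)=\ell(w^J)+\ell(w_J)$, which is (ii). For the uniqueness in (i), suppose $w=xy$ with $x\in W^J$ and $y\in W_J$. Then $x=wy^{-1}\in wW_J=w^JW_J$, so $x=w^Jv'$ with $v':=(w^J)^{-1}x\in W_J$, and $(\star)$ gives $\ell(x)=\ell(w^J)+\ell(v')$. If $v'\ne e$, pick $s\in J$ with $\ell(v's)<\ell(v')$ (possible since $v'\in W_J\setminus\{e\}$, whose support lies in $J$); then a further application of $(\star)$ yields $\ell(xs)=\ell(w^J)+\ell(v's)=\ell(x)-1$, whence $s\in D_R(x)\cap J$, contradicting $x\in W^J$. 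So $v'=e$, i.e. $x=w^J$ and $y=w_J$. Taking $x$ to be an arbitrary minimal-length element of $wW_J$ in this paragraph also shows such an element is unique, so $w^J$ and $w_J$ were unambiguously defined at the outset.

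\smallskip
\noindent The one genuine obstacle is $(\star)$. The delicate point there is that, once the first position at which the concatenated word ceases to be reduced has been located, the letter the Subword Property lets us delete is \emph{forced} to come from the $W^J$-factor --- the $W_J$-factor being shielded by the fact that prefixes of a reduced expression are themselves reduced --- and deleting it manufactures a shorter element of the coset, which is exactly the contradiction sought.
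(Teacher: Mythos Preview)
Your proof is correct. Note, however, that the paper does not actually prove this proposition: it is quoted as a standard fact with a pointer to \cite[\S 2.4]{BB} and \cite[\S 1.10]{Hum}. Your argument is essentially the classical one found in those references --- pick a minimal-length coset representative, use the Exchange/Deletion phenomenon (which you extract from the Subword Property) to get length-additivity, and deduce uniqueness from that --- so there is nothing to contrast.

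One small remark on presentation: in the proof of $(\star)$, the step ``$zs_m$ is obtained from the reduced expression of $z$ by deleting a single letter'' is really the Strong Exchange Condition; invoking the Subword Property as you do is fine (since $zs_m\lhd z$ forces a subword of length $\ell(z)-1$), but most readers will recognize this as Exchange, and citing it that way would be slightly cleaner.
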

There are, of course, left versions of the above definition and
result. Namely, if we let  
\begin{equation*}
^{J}  W = \{ w \in W \colon\,  D_{L}(w)\subseteq S\setminus J \} =(W^{J})^{-1}, 
\end{equation*}
then every $w \in W$ can be uniquely factorized $
w={_{J} w} \cdot  {^{J}\! w}$, where ${_{J} w} \in W_{J}$, ${^{J} \!
w} \in \,  ^{J} W$,
and $\ell(w)=\ell({_{J} w })+\ell({^{J} \! w})$.

We also need the two following well-known results (a proof of the first can be found in \cite[Lemma~7]{Hom74}, while the second is easy to prove).
\begin{pro}
\label{unicomax}
Let \( J\subseteq S \) and $w \in W$.  The set 
$ W_{J}\cap [e,w] $ has a unique  maximal element  \( w_0(J) \),
so  \( W_{J}\cap [e,w]$ is the interval $[e,w_0(J)] \). 
\end{pro}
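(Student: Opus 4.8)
The plan is to prove the existence of a unique maximal element of $W_J\cap[e,w]$ by induction on $\ell(w)$, using only the Subword Property and the Lifting Property for Bruhat order (Lemma~\ref{lpfsm} applied to left or right multiplication, or \cite[Proposition~2.2.7]{BB}). The base case $\ell(w)=0$ is immediate, since then $W_J\cap[e,w]=\{e\}$.

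For the inductive step, with $\ell(w)\geq 1$, I would fix a left descent $s\in D_L(w)$ and set $v=sw$, so that $v\lhd w$ and, by the inductive hypothesis, $W_J\cap[e,v]$ has a unique maximal element $z$. The basic dichotomy I would extract from the Lifting Property is that, for every $x\leq w$, either $x\leq v$, or else $s\in D_L(x)$ and $sx\leq v$. If $s\notin J$, then no element of $W_J$ has $s$ as a left descent, so this dichotomy forces $W_J\cap[e,w]=W_J\cap[e,v]$, and $z$ remains the maximum. If $s\in J$, I would set $\widehat z=z$ when $s\in D_L(z)$ and $\widehat z=sz$ otherwise; in either case the Lifting Property shows that $\widehat z\in W_J\cap[e,w]$, that $z\leq\widehat z$, and that $s\in D_L(\widehat z)$. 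Then, for an arbitrary $x\in W_J\cap[e,w]$: if $x\leq v$ then $x\leq z\leq\widehat z$; and if $x\not\leq v$, the dichotomy gives $sx\in W_J\cap[e,v]$, hence $sx\leq z\leq\widehat z$, and, since $s\in D_L(\widehat z)$, the Lifting Property upgrades this to $x=s(sx)\leq\widehat z$. So $\widehat z$ is the unique maximal element; I would name it $w_0(J)$.

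It then remains to deduce $W_J\cap[e,w]=[e,w_0(J)]$. The inclusion from left to right is just the defining property of a maximum; for the reverse inclusion, one uses that $W_J$ is an order ideal for Bruhat order: if $y\leq w_0(J)$ then $y\leq w_0(J)\leq w$, and by the Subword Property a reduced expression for $y$ is a subword of one for $w_0(J)\in W_J$, hence uses only generators in $J$, so $y\in W_J$.

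The step I expect to be the main obstacle is the case $s\in J$: one must pick the correct instances of the Lifting Property so as to check simultaneously that the corrected candidate $\widehat z$ still lies below $w$ and that it dominates every element of $W_J\cap[e,w]$ that is not already below $v$. The remaining verifications are routine applications of the Subword Property.
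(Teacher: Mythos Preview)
Your proof is correct. The paper itself does not prove this proposition; it merely cites \cite[Lemma~7]{Hom74} for a proof, so there is no in-paper argument to compare against. Your induction on $\ell(w)$ via a left descent $s$, splitting into the cases $s\notin J$ and $s\in J$ and using the Lifting Property to build the corrected maximum $\widehat z$, is a clean and standard way to establish the result. The only minor remark is that the dichotomy you state (either $x\leq v$, or $s\in D_L(x)$ and $sx\leq v$) is not exclusive, but you only use it in the non-exclusive sense, so this causes no problem.
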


We note that the term $w_0(J)$ denotes something different in \cite{BB} and that, if $J=\{s,t\}$, we adopt the lighter notation $w_0(s,t)$ to  mean $w_0(\{s,t\})$.

\begin{pro}
\label{mantiene}
If \( J\subseteq S \) and $v,w \in W$, with $v\leq w$, then $v^J\leq w^J$ and ${^J\! v}\leq \,{^J\!w}$.
\end{pro}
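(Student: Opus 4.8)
The plan is to prove the right-hand statement $v^J\le w^J$; the left-hand statement ${}^J v\le {}^J w$ then follows formally by passing to inverses. Indeed $v\le w$ if and only if $v^{-1}\le w^{-1}$, and comparing the two factorizations in Proposition~\ref{fattorizzo} with the relation ${}^JW=(W^J)^{-1}$ one gets $({}^Jx)^{-1}=(x^{-1})^J$ for every $x\in W$ (take inverses in $x={}_Jx\cdot{}^Jx$ and invoke uniqueness of the right factorization of $x^{-1}$); hence $v^J\le w^J$ applied to $v^{-1},w^{-1}$ yields $({}^Jv)^{-1}\le({}^Jw)^{-1}$, i.e. ${}^Jv\le{}^Jw$. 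Since $W$ is graded by $\ell$ under Bruhat order, any $v\le w$ is joined to $w$ by a saturated chain, so it suffices to prove $v^J\le w^J$ under the extra hypothesis $v\lhd w$ and then compose along the chain.

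So assume $v\lhd w$. By Proposition~\ref{fattorizzo}(ii) we may fix a reduced expression $s_1\textrm{-}\cdots\textrm{-}s_p$ of $w^J$ (so $p=\ell(w^J)$) and a reduced expression $s_{p+1}\textrm{-}\cdots\textrm{-}s_n$ of $w_J$ (so $s_{p+1},\dots,s_n\in J$); their concatenation $s_1\textrm{-}\cdots\textrm{-}s_n$ is a reduced expression of $w$. Since $\ell(v)=\ell(w)-1$ and $v\le w$, the Subword Property (in the form that \emph{every} reduced expression of $w$ has a subword reduced for $v$) shows that $v$ is obtained by deleting exactly one letter from this word, say the letter in position $i$, and the resulting word is reduced.

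Now split according to the position $i$. If $i>p$, let $y$ be the element spelled by $s_{p+1}\textrm{-}\cdots\textrm{-}s_n$ with the $i$-th letter removed; a length count (the remaining word has $n-p-1$ letters, $\ell(w^J)=p$, $\ell(v)=n-1$, and $\ell(w^Jy)\le\ell(w^J)+\ell(y)$) forces $v=w^J\cdot y$ to be length-additive, and since $w^J\in W^J$ and $y\in W_J$, uniqueness in Proposition~\ref{fattorizzo}(i) gives $v^J=w^J$, so $v^J\le w^J$ trivially. If $i\le p$, let $x$ be the element spelled by $s_1\textrm{-}\cdots\textrm{-}s_p$ with the $i$-th letter removed; the analogous count shows $x$ is reduced as written and $v=x\cdot(s_{p+1}\cdots s_n)$ is length-additive. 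Since $s_{p+1}\cdots s_n\in W_J$ we get $vW_J=xW_J$, hence $v^J=x^J$. Finally $x$ is a subword of the reduced expression $s_1\textrm{-}\cdots\textrm{-}s_p$ of $w^J$, so $x\le w^J$ by the Subword Property; and concatenating reduced expressions of $x^J$ and $x_J$ exhibits $x^J$ as a subword of a reduced expression of $x$, so $x^J\le x$. Chaining, $v^J=x^J\le x\le w^J$, as desired.

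The whole argument is a careful choice of reduced word followed by bookkeeping, and no step is genuinely hard; the one place that needs attention is the case $i\le p$, where one cannot claim $x\in W^J$ (this may fail) and must instead route through the inequalities $x^J\le x\le w^J$. Everything else reduces to the length-additivity recorded in Proposition~\ref{fattorizzo} and to the Subword Property of Theorem~\ref{subword}.
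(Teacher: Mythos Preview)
The paper does not actually supply a proof of this proposition: it is listed among the ``well-known results'' with the remark that it ``is easy to prove,'' and no argument is given. So there is nothing in the paper to compare your proof against.

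That said, your proof is correct. The reduction to the covering case via a saturated chain is legitimate since Bruhat order is graded by $\ell$; the left--right symmetry via inverses is valid (the identity $({}^Jx)^{-1}=(x^{-1})^J$ follows exactly as you say from uniqueness in Proposition~\ref{fattorizzo}); and the core argument for $v\lhd w$ is sound. In particular, your length-count to force $\ell(x)=p-1$ and length-additivity of $v=x\cdot w_J$ in the case $i\le p$ is correct (from $\ell(v)=n-1\le \ell(x)+\ell(w_J)\le (p-1)+(n-p)$), and you rightly flag that $x$ need not lie in $W^J$, circumventing this via $v^J=x^J\le x\le w^J$. This is essentially the standard elementary proof (cf.\ \cite[Proposition~2.5.1]{BB}), and nothing more is needed.
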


The following elementary results are needed later (see \cite[Lemmas~4.6, 4.7, and~4.8]{CM} for a proof). 
\begin{lem}
\label{lemma0cap}
Fix $H \subseteq S$ and $u=u^H\cdot u_H\in W$. If $j \in D_R(u) \setminus H$, then  $j \in D_R(u^H)$.
\end{lem}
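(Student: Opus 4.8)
The plan is to argue by contradiction: assume $j\in D_R(u)\setminus H$ but $j\notin D_R(u^H)$, and enlarge $H$ to $H':=H\cup\{j\}$, which is a proper superset of $H$ since $j\in S\setminus H$. The idea is that the failure of $j$ to be a right descent of $u^H$ is exactly what is needed to make the given factorization $u=u^H\cdot u_H$ coincide with the parabolic factorization of $u$ relative to the larger set $H'$; then appending $j$ on the right stays inside the $W_{H'}$-part, which forces $\ell(uj)=\ell(u)+1$ and contradicts $j\in D_R(u)$.

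First I would check that $u^H\in W^{H'}$: indeed $u^H\in W^H$ gives $D_R(u^H)\subseteq S\setminus H$, and adding the hypothesis $j\notin D_R(u^H)$ yields $D_R(u^H)\subseteq S\setminus H'$. Since also $u_H\in W_H\subseteq W_{H'}$, the equality $u=u^H u_H$ exhibits a factorization of $u$ of the type described in Proposition~\ref{fattorizzo} relative to $H'$; by the uniqueness in that proposition, $u^{H'}=u^H$ and $u_{H'}=u_H$.

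Next I would analyze $uj$. Because $j\in H'$, the product $u_H j$ lies in $W_{H'}$; moreover $j\notin D_R(u_H)$ (as $u_H\in W_H$ has all its right descents in $H$, while $j\notin H$), so $\ell(u_H j)=\ell(u_H)+1$. Thus $uj=u^{H'}\cdot(u_H j)$ is again a factorization of the type in Proposition~\ref{fattorizzo} relative to $H'$, whence $(uj)^{H'}=u^{H'}$ and $(uj)_{H'}=u_H j$; applying part~(ii) gives $\ell(uj)=\ell(u^{H'})+\ell(u_H j)=\ell(u^H)+\ell(u_H)+1=\ell(u)+1$, contradicting $j\in D_R(u)$.

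There is essentially no hard obstacle here; the only point requiring a little care is the bookkeeping with the two parabolic factorizations — making sure that any factorization $w=xy$ with $x\in W^{H'}$ and $y\in W_{H'}$ really is \emph{the} factorization of Proposition~\ref{fattorizzo} (so that uniqueness applies and its lengths add), together with the elementary fact that $D_R(v)\subseteq H$ for every $v\in W_H$. Alternatively, one can avoid the contradiction and prove the contrapositive directly: if $j\notin D_R(u^H)$, then the very same computation shows $\ell(uj)=\ell(u)+1$, so $j\notin D_R(u)$.
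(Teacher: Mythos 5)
Your argument is correct. Note that the paper itself gives no proof of this lemma --- it is quoted from \cite[Lemma~4.6]{CM} --- so there is no internal proof to compare against; judged on its own, your contrapositive via the enlarged parabolic subset $H'=H\cup\{j\}$ is a clean and complete argument. The two points that need care are exactly the ones you flag, and both are fine: the uniqueness clause of Proposition~\ref{fattorizzo}(i) is uniqueness among \emph{all} factorizations $w=xy$ with $x\in W^{H'}$, $y\in W_{H'}$, so exhibiting $uj=u^H\cdot(u_Hj)$ with $u^H\in W^{H'}$ (which uses both $u^H\in W^H$ and the assumption $j\notin D_R(u^H)$) and $u_Hj\in W_{H'}$ does identify the parabolic components of $uj$ and lets you invoke the length additivity of part~(ii); and $D_R(u_H)\subseteq H$ holds because any right descent of $v$ lies in $\mathrm{Supp}(v)$ by the Subword Property, so $\ell(u_Hj)=\ell(u_H)+1$. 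This gives $\ell(uj)=\ell(u)+1$, contradicting $j\in D_R(u)$. An alternative standard route is via the Exchange Property (deleting a letter from a concatenated reduced word $u^H\cdot u_H$ and ruling out that the deleted letter lies in the $u_H$-part, since that would force $j\in W_H$); your approach avoids reduced-word manipulations entirely and rests only on the uniqueness of the parabolic decomposition, which is arguably tidier.
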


\begin{lem}
\label{lemma00cap}
Fix $u\in W$ and $t,j \in S$, with $t\leq u$ and $j \in D_R(u)$.  If there exists a reduced expression  $X$ for $u$ such that $t \textrm{-}j$ is not a subword of $X$, then $t$ and $j$ commute.
\end{lem}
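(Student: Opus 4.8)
The plan is to derive the statement from the Subword Property (Theorem~\ref{subword}) alone, with no recourse to induction. The guiding idea is that, under the hypotheses, the element $tj$ must lie below $u$, whereas the assumed reduced expression $X$ rules this out as soon as $t$ and $j$ fail to commute. In other words, the whole statement boils down to the standard fact that $t\textrm{-}j$ is the unique reduced expression of the element $tj$ precisely when $m_{t,j}\geq 3$.

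First I would dispose of the case $t=j$, where there is nothing to prove, and assume $t\neq j$ from now on. Then I would establish two consequences of the Subword Property. \emph{(a) $tj\leq u$.} Since $j\in D_R(u)$, the element $u$ has a reduced expression ending in $j$, say $Y\textrm{-}j$; since $t\leq u$ and $t\neq j$, the letter $t$ occurs somewhere in $Y$, so $t\textrm{-}j$ is a subword of $Y\textrm{-}j$; and as $t\textrm{-}j$ is itself a reduced expression of the length-two element $tj$, the Subword Property yields $tj\leq u$. \emph{(b) If $m_{t,j}\geq 3$, then every reduced expression of $u$ has $t\textrm{-}j$ as a subword.} Indeed, by (a) and the Subword Property, every reduced expression of $u$ contains, as a subword, some reduced expression of $tj$; but a reduced expression of $tj$ uses exactly the letters $t$ and $j$, hence equals $t\textrm{-}j$ or $j\textrm{-}t$, and the word $j\textrm{-}t$ represents $tj$ only when $jt=tj$, i.e.\ only when $m_{t,j}=2$. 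So, when $m_{t,j}\geq 3$, the word $t\textrm{-}j$ is the unique reduced expression of $tj$, and the claim follows.

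Granting (a) and (b), the conclusion is immediate: if $t$ and $j$ did not commute, then $m_{t,j}\geq 3$, and (b) would force the given reduced expression $X$ of $u$ to contain $t\textrm{-}j$ as a subword, contradicting the hypothesis on $X$; hence $m_{t,j}=2$, that is, $t$ and $j$ commute. I do not expect a genuine obstacle here: the only real point is to spot the reformulation above, and the remaining verifications (existence of a reduced expression of $u$ ending in $j$, the fact that the letters of a reduced expression are exactly the support of the element, and the description of the reduced expressions of $tj$) are all routine. One could alternatively argue by induction on $\ell(u)$, removing the last letter $s_q$ of $X$ and checking that $j$ stays a right descent and $t$ stays $\leq$ the shortened element (using the standard fact on reduced expressions terminating in a braid block), but the argument sketched above avoids that bookkeeping entirely.
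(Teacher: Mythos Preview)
Your argument is correct. The key observation---that when $m_{t,j}\geq 3$ the word $t\textrm{-}j$ is the \emph{only} reduced expression of the length-two element $tj$, so by the Subword Property it must occur as a subword of every reduced expression of any element above $tj$---is exactly what is needed, and your verification that $tj\leq u$ (using a reduced expression of $u$ ending in $j$ together with $t\in\mathrm{Supp}(u)\setminus\{j\}$) is clean.

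As for comparison with the paper: this lemma is not proved here; the paper simply cites \cite[Lemmas~4.6--4.8]{CM}. Your proof is therefore a self-contained substitute for that citation. It is essentially the natural argument one would expect in \cite{CM} as well, so there is no meaningful methodological divergence to report.
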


\begin{lem}
\label{lemma2cap}
Let $u\in W$, $s,t \in S$ with $m_{s,t}\geq 3$, $t\nleq u^{\{s,t\}}$ and $ts\leq u_{\{s,t\}}$. If $j\in  D_{R}(u)\setminus \{s,t\}$, then  $j$ commutes with $s$ and $t$.
\end{lem}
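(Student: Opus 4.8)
\textbf{Plan of proof for Lemma~\ref{lemma2cap}.}
The hypotheses give a dichotomy to exploit: writing $u = u^{\{s,t\}}\cdot u_{\{s,t\}}$, the element $u_{\{s,t\}}$ lies in $W_{\{s,t\}}$, which is a dihedral group of order $2m_{s,t}$. Since $ts\leq u_{\{s,t\}}$ and $\ell(ts)=2$, the element $u_{\{s,t\}}$ has length at least $2$; and since $t\nleq u^{\{s,t\}}$, all occurrences of $t$ in a reduced expression for $u$ must come from the $u_{\{s,t\}}$ part. I will first pin down $D_R(u_{\{s,t\}})$: the only elements of the dihedral group $W_{\{s,t\}}$ of length $\geq 2$ whose set of right descents does \emph{not} contain $t$ are the words ending in $s$; but $ts\leq u_{\{s,t\}}$ forces, via the Subword Property inside $W_{\{s,t\}}$, that $u_{\{s,t\}}$ has $t$ among its left descents, hence (by the alternating structure of reduced words in a dihedral group) that any reduced expression of $u_{\{s,t\}}$ begins with $t$. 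Combining these observations, I expect to show that $u_{\{s,t\}}$ admits a reduced expression that both begins and ends with $t$ — equivalently $u_{\{s,t\}}=t\cdot s\cdot t\cdots t$ with an odd number $\geq 3$ of letters — \emph{or} that we may at least arrange a reduced expression for $u$ of the form $(\text{reduced word for }u^{\{s,t\}})\textrm{-}t\textrm{-}s\textrm{-}t\textrm{-}\cdots$, with the tail lying entirely in the $\{s,t\}$-block and starting with $t$.

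Next I bring in the descent $j\in D_R(u)\setminus\{s,t\}$. By Lemma~\ref{lemma0cap} (applied with $H=\{s,t\}$), from $j\in D_R(u)\setminus H$ we get $j\in D_R(u^{\{s,t\}})$, so $j$ already appears as a right descent of the $H$-free part. The strategy is then to produce, starting from the reduced expression of $u$ assembled above, a reduced expression $X$ for $u$ in which the subword $t\textrm{-}j$ does \emph{not} occur: concretely, take a reduced expression for $u^{\{s,t\}}$ ending in $j$ (possible since $j\in D_R(u^{\{s,t\}})$) and append the dihedral tail $t\textrm{-}s\textrm{-}t\textrm{-}\cdots$; because $j$ does not belong to $\{s,t\}$, it does not reappear after the junction, so the only place a ``$t$'' sits to the left of a ``$j$'' would require a $t$ inside $u^{\{s,t\}}$, which is impossible since $t\nleq u^{\{s,t\}}$. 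Hence $t\textrm{-}j$ is not a subword of $X$, and Lemma~\ref{lemma00cap} (with $t$ and $j$ in the roles of its $t$ and $j$) yields that $t$ and $j$ commute, i.e.\ $m_{t,j}=2$.

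It remains to show $j$ commutes with $s$ as well. For this I will symmetrize: once $tj=jt$ is known, the letter $t$ can be slid past $j$, and I will manufacture a second reduced expression $X'$ for $u$ in which $s\textrm{-}j$ is not a subword. The idea is to rewrite the dihedral tail using the braid relation so that it \emph{begins with $s$} instead of $t$ — but $s$ must then be pushed to the left across the $u^{\{s,t\}}$ part; this is where one has to be careful, since $s$ might genuinely appear in $u^{\{s,t\}}$. The cleaner route is to use that $j\in D_R(u)$ together with $tj=jt$: choose a reduced expression for $u$ ending in $\cdots t\textrm{-}s\textrm{-}j$ or $\cdots s\textrm{-}t\textrm{-}j$ (legitimate because $j$ is a right descent and we may commute $j$ leftward past $t$), and in the former case apply the fact that $t,j\in D_R$ of the truncation to reorganize into a reduced expression avoiding $s\textrm{-}j$. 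Applying Lemma~\ref{lemma00cap} once more gives $sj=js$.

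\textbf{Main obstacle.} The genuinely delicate point is the very last step — proving that $j$ commutes with $s$. Commutation with $t$ falls out almost immediately from the positional argument (no ``$t$'' can sit to the left of ``$j$'', since $t$ is confined to the dihedral block and that block is to the right of $j$'s occurrence in $u^{\{s,t\}}$). But $s$ \emph{can} legitimately occur inside $u^{\{s,t\}}$, so one cannot simply say ``no $s$ sits to the left of $j$''. The fix will have to use the specific alternating shape of $u_{\{s,t\}}$ (forced to start and end with $t$, by the first paragraph) so that the $s$'s in the tail are flanked by $t$'s, combined with the already-established relation $tj=jt$ to migrate an $s$ into a position where a braid move or a commutation removes every $s\textrm{-}j$ pattern; making this bookkeeping rigorous, rather than hand-wavy, is the crux. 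An alternative, possibly slicker, finish is to apply the whole argument with the roles of $s$ and $t$ interchanged after first checking that the hypotheses are symmetric enough once $tj=jt$ is in hand — but verifying that symmetry (in particular that one still has a ``$t\nleq$'' statement guaranteeing $s$'s confinement) is itself the obstacle in disguise.
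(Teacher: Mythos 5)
Your first half is fine: concatenating a reduced word for $u^{\{s,t\}}$ with one for $u_{\{s,t\}}$ gives a reduced expression in which every $t$ stands to the right of every $j$ (because $t\not\leq u^{\{s,t\}}$ and $j\notin\{s,t\}$), so Lemma~\ref{lemma00cap} yields $tj=jt$; note that the paper itself offers no proof of Lemma~\ref{lemma2cap} (it refers to \cite[Lemma~4.8]{CM}), so I judge the attempt on its own. However, already in that paragraph you assert that $ts\leq u_{\{s,t\}}$ forces $t\in D_L(u_{\{s,t\}})$, hence a dihedral tail beginning (indeed beginning and ending) with $t$. This is false: for $m_{s,t}\geq 4$ the element $u_{\{s,t\}}=sts$ satisfies $ts\leq sts$ but $D_L(sts)=\{s\}$, and similarly $u_{\{s,t\}}=stst$ for $m_{s,t}\geq 5$. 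Since your later ``bookkeeping'' for the $s$-part is built precisely on this forced shape, that part of the plan rests on a false premise.

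The genuine gap is the commutation of $j$ with $s$, which you acknowledge you do not prove; worse, the route you sketch cannot be completed. Your plan is to produce a reduced expression of $u$ containing no $s\textrm{-}j$ as a subword (in the paper's scattered sense) and then apply Lemma~\ref{lemma00cap} again. Such an expression need not exist under the hypotheses: take $W$ of type $A_4$ with simple reflections $j,r,s,t$ along the path (so $m_{j,r}=m_{r,s}=m_{s,t}=3$, all other orders $2$) and $u=srjts$. Then $u^{\{s,t\}}=srj$, $u_{\{s,t\}}=ts$, $t\not\leq u^{\{s,t\}}$, $ts\leq u_{\{s,t\}}$, and $j\in D_R(u)$, so all hypotheses of Lemma~\ref{lemma2cap} hold; but $D_L(u)=\{s\}$, so every reduced expression of $u$ begins with $s$ and contains a $j$, hence contains $s\textrm{-}j$ as a subword. (Here $j$ does commute with $s$, as the lemma predicts, but your method is structurally unable to detect it.) Your fallback of interchanging the roles of $s$ and $t$ also fails, as you suspected, because the hypotheses are not symmetric: nothing prevents $s\leq u^{\{s,t\}}$. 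So the $s$-commutation needs a genuinely different argument, one that uses the precise order of the letters in $ts\leq u_{\{s,t\}}$ together with $t\not\leq u^{\{s,t\}}$ and $j\in D_R(u)$ (for instance via the exchange property, as in \cite[Lemma~4.8]{CM}), rather than a second application of Lemma~\ref{lemma00cap}.
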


We are interested in the special matchings of a Coxeter group $W$ (to be precise, of intervals in $W$)  partially ordered by  Bruhat order. Given $w\in W$, we say that $M$ is a matching of $w$ if $M$ is a matching of the lower Bruhat interval $[e,w]$.
If \( s\in D_{R}(w) \) (respectively, $s\in D_{L}(w)$), we define a matching 
\(\rho_{s}  \) (respectively, $\lambda_{s}$) of $w$ 
by \( \rho_{s}(u)=us \) (respectively, 
$\lambda_{s}(u)=su$) for all \( u\leq w \). From 
the ``Lifting Property'' (see, for example, \cite[Proposition~2.2.7]{BB} or \cite[Proposition~5.9]{Hum}), it easily follows that $\rho_s$ 
(respectively, $\lambda_s$) is a special matching of $w$. 
We call a matching $M$ of $w$ a \emph{left multiplication
matching} if there exists \( s \in S \) such that  \( M=\lambda _{s} \)
on $[e,w]$, and we call it a  \emph{right multiplication
matching} if there exists \( s \in S \) such that  \( M=\rho _{s} \)
on $[e,w]$.

\begin{defi}
\label{destro}
A  right system for $w\in W$ is a quadruple $\mathcal R=(J,s,t,M_{st})$ such that:
\begin{enumerate}

\item[R1.] $J\subseteq S$, $s\in J$, $t\in S\setminus J$, and $M_{st}$ is a special matching of $w_0(s,t)$ such that  $M_{st}(e)=s$ and  $M_{st}(t)=ts$;

\item[R2.] $(u^{J})^{\{s,t\}}\, \cdot \, M_{st} \Big ((u^{J})_{\{s,t\}} \, \cdot \, _{\{s\}} (u_{J})\Big )\, \cdot \,
 ^{\{s\}}(u_{J}) \leq w$,  for all $u\leq w$;

 \item[R3.] 
if $r\in J$ and $r \leq w^J$, then $r$ and $s$ commute;

\item[R4.]
\label{ddddxxxx}
\begin{enumerate}
\item  if $s\leq (w^J)^{\{s,t\}}$ and  $t\leq (w^J)^{\{s,t\}}$, then $M_{st}= \rho_s$,
\item   if $s\leq (w^J)^{\{s,t\}}$ and  $t\not \leq (w^J)^{\{s,t\}}$, then $M_{st}$ commutes with $\lambda_s$,
\item   if $s\not\leq (w^J)^{\{s,t\}}$ and  $t \leq (w^J)^{\{s,t\}}$, then $M_{st}$ commutes with $\lambda_t$;
\end{enumerate}

\item[R5.] if $s\leq \, ^{\{s\}} (w_{J})$, then $M_{st}$ commutes with $\rho_s$ on $[e,w_0(s,t)]$.

\end{enumerate} 

\end{defi}

\begin{defi}
\label{sinistro}
A left system for $w\in W$ is a quadruple $\mathcal L= (J,s,t,M_{st})$ such that:

\begin{enumerate}
\item[L1.] $J\subseteq S$, $s\in J$, $t\in S\setminus J$, and $M_{st}$  is a special matching of $w_0(s,t)$ such that $M_{st}(e)=s$ and  $M_{st}(t)=st$;

\item[L2.] $    (_Ju)^{\{s\}} \, \cdot \, M_{st} \Big( \, (_Ju)_{\{s\}}  \,  \cdot  \,  _{\{s,t\}} (^J  u) \Big) \, 
\cdot \, ^{\{s,t\}}(^J u)    \leq w$,  for all $u\leq w$;
 \item[L3.] 
if $r\in J$ and $r \leq \, ^Jw$, then $r$ and $s$ commute;

\item[L4.]
\label{puresx}
\begin{enumerate}
\item  if $s\leq \, ^{\{s,t\}}(^Jw)$ and  $t\leq \, ^{\{s,t\}}(^Jw)$, then $M_{st}= \lambda_s$,
\item   if $s\leq \, ^{\{s,t\}}(^Jw)$ and  $t\not \leq \, ^{\{s,t\}}(^Jw)$, then $M_{st}$ commutes with $\rho_s$,
\item   if $s\not\leq \, ^{\{s,t\}}(^Jw)$ and  $t \leq \, ^{\{s,t\}}(^Jw)$, then $M_{st}$ commutes with $\rho_t$;
\end{enumerate}

\item[L5.] if $s\leq \,  (_{J}w)^{\{s\}}$, then $M_{st}$ commutes with $\lambda_s$ on $[e,w_0(s,t)]$.

\end{enumerate} 
\end{defi}
As shown in \cite[Lemma~4.3]{CM}, Properties R5 and L5 are equivalent to the, a priori, more restrictive Properties R5 and L5 appearing in \cite{Mtrans}. We also note that Properties R4 (a) and L4 (a) do not appear in the definitions of right and left systems in \cite{CM} since they appear there in Lemma 4.2 as a consequence of the other properties. Nevertheless we realized that there are a few (irrelevant) exceptions to the statement of \cite[Lemma 4.2]{CM}, so it is more correct to add Properties R4 (a) and L4 (a) in the present definition of right and left systems.

With a right system $\mathcal R=(J,s,t,M_{st})$ for $w$, we associate the matching $M_{\mathcal R}$ on $[e,w]$ sending $u\in [e,w]$ to  
 $$M_\mathcal R (u) = (u^{J})^{\{s,t\}}\, \cdot \, M_{st} \Big( (u^{J})_{\{s,t\}} \,  \cdot  \,  _{\{s\}} (u_{J}) \Big) \, 
\cdot \, ^{\{s\}}(u_{J}).$$
Symmetrically, we associate with any  left system $\mathcal L$ for $w$  the matching $_\mathcal L M$ sending 
$u\in [e, w]$ to 
 $_{\mathcal L}M(u)=\big(M_\mathcal L(u^{-1})\big)^{-1}$, where $M_\mathcal L$ is the matching on $[e,w^{-1}]$ associated to $\mathcal L$ as a right system for $w^{-1}$.

The maps $M_\mathcal R$ and $_{\mathcal L}M$ are actually  matchings, as shown in \cite[Corollary~4.10]{CM}.

The following is the main result of \cite{CM} (see \cite[Theorem~5.9]{CM}).
\begin{thm}
\label{caratteri}
 Let $(W,S)$ be a Coxeter system. If $w\in W$ then
 \begin{enumerate}
 \item the matching associated with a right or left system of $w$ is special;
  \item a special matching of $w$ is the matching associated with a right or left system of $w$;
  \item if $\mathcal R=(J,s,t,M_{st})$ and $\mathcal R'=(J',s',t',M_{s't'})$ are right systems, then $M_\mathcal R=M_{\mathcal R'}$ if and only if $s=s'$, $J\cup C_s=J'\cup C_s$ and one of the following conditions is satisfied:
  \begin{itemize}
   \item $M_{st}(u)=us$ for all $u\leq w_0(s,t)$ and $M_{s't'}(u)=us$ for all $u\leq w_0(s,t')$,
   \item $t=t'$ and $M_{st}=M_{s't'}$;
  \end{itemize}
  \item if $\mathcal R=(J,s,t,M_{st})$ is a right system and $\mathcal L=(K,s',t',M_{s't'})$ is a left system, then $M_\mathcal R=\,_\mathcal L M$ if and only if $s=s'$, $J\cap K \subseteq C_s$, $J\cup K \subseteq S\setminus C_s$, $M_{st}=\rho_s$, $M_{s't'}=\lambda_s$.
\end{enumerate}
\end{thm}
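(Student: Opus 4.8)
The plan is to prove the four bullet points by translating back and forth between the new ``system'' formalism $\mathcal S=(J,H,M)$ and the ``right/left system'' formalism of \cite{CM}, for which the analogous classification (Theorem~\ref{caratteri}) is already available. The bridge is the following dictionary: a system $\mathcal S=(J,H,M)$ with $|H|=1$, say $H=\{s\}$ and $M$ a multiplication matching (forced by [S0]), should correspond to a right or left multiplication matching; a system with $H=\{s,t\}$, $m_{s,t}\geq 3$, should correspond to a genuinely nontrivial right system $\mathcal R=(J',s',t',M_{st})$ or left system $\mathcal L$, where the ``$M(e)$'' of the new setup plays the role of $s'$ and $w_0(H)=w_0(s,t)$ carries the matching $M_{st}$. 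The first task is to make this correspondence precise at the level of the defining axioms: one checks that [S1] matches R3/L3 (the support of $w^J$ avoids the ``active'' generators in the right way), that [S2] repackages R4(b)(c) and L4(b)(c) together with R5/L5 (the commutation of $M$ with $\lambda_\alpha$ or $\rho_\beta$ encodes exactly the commutation requirements on $M_{st}$), and that the self-dual formulation $(w^J)^H$ versus $^H({}^Kw)$ is what unifies the ``right'' and ``left'' cases of \cite{CM} into one.

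\textbf{Well-definedness and the matching property.} Before any of the classification statements, I would establish that $M_{\mathcal S}(u)=a\cdot M(b)\cdot c$ is independent of the chosen $\mathcal S$-factorization $u=a\cdot b\cdot c$, and that $M_{\mathcal S}$ is a matching of $[e,w]$ whenever $M_{\mathcal S}(w)\lhd w$. For independence, I would argue that any two $\mathcal S$-factorizations of $u$ differ by moving letters between $a,b,c$ that commute with $M$ (this is where the hypotheses $|\mathrm{Supp}_H(a)|\le 1$, ``$M$ commutes with $\lambda_\alpha$'', etc., are used), so applying $M$ to the middle factor gives the same result; the key lemmas here are Lemma~\ref{lemma0cap}, Lemma~\ref{lemma00cap}, and Lemma~\ref{lemma2cap}, which control how right/left descents outside $H$ interact with the parabolic factorizations. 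Once $M_{\mathcal S}$ is well defined, one identifies it with $M_{\mathcal R}$ (or $_{\mathcal L}M$) for the corresponding right (or left) system under the dictionary above; since those maps are matchings by \cite[Corollary~4.10]{CM}, so is $M_{\mathcal S}$. The second bullet (if $M_{\mathcal S}(w)\lhd w$ then $M_{\mathcal S}$ is special) then follows from Theorem~\ref{caratteri}(1), and the first bullet (every special matching arises as some $M_{\mathcal S}$) from Theorem~\ref{caratteri}(2) together with the surjectivity half of the dictionary — every right and every left system of \cite{CM} is hit by some new system $\mathcal S$.

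\textbf{The Bruhat-compatibility bullet.} For the third bullet, I would proceed by induction on $\ell(a')+\ell(c')$, i.e.\ on how far the factorization $u=a'\cdot b'\cdot c'$ is from being ``concentrated in the middle''. The base case $a'=c'=e$ says $M(u)=M_{\mathcal S}(b')$ when $b'\le b$, which is essentially the definition combined with the fact that $b'$ admits the trivial $\mathcal S$-factorization $e\cdot b'\cdot e$. For the inductive step, suppose $a'\ne e$, pick $\alpha\in D_L(a')$; since $a'\le a\in W_K\cap W^H$, one has $\alpha\in K$ and $\alpha$ interacts with $M$ as prescribed by the $\mathcal S$-factorization axioms, so $\lambda_\alpha$ commutes with $M$ on the relevant interval and $M(\alpha u)=\alpha M(u)$; applying the inductive hypothesis to $\alpha u = (\alpha a')\cdot b'\cdot c'$ (a valid factorization with $\alpha a'\le a$) finishes the step, and symmetrically for $c'\ne e$ using a right descent. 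The one subtlety is that $M(u)$ here denotes the special matching of $w$, not $M_{\mathcal S}$ a priori — but by the first two bullets $M=M_{\mathcal S'}$ for \emph{some} system $\mathcal S'$, and a short argument (using the last bullet's normalization, or directly comparing actions on generators) shows one may take $\mathcal S'=\mathcal S$, so $M=M_{\mathcal S}$ and the commutation statements above apply verbatim.

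\textbf{The enumeration bullet and the main obstacle.} For the last bullet I would first observe that every special matching $M$ of $w$ determines, for each $r\in S$, whether $M(r)=sr$ with $s=M(e)$; grouping the systems by this data and by $s$ shows the map $\mathcal S\mapsto M_{\mathcal S}$ restricted to $SM_w$ is still surjective onto the special matchings (any $M_{\mathcal S}$ can be replaced by one in $SM_w$ by adjusting $J$ within its allowed range $C_s\subseteq J\subseteq S$ without changing $M_{\mathcal S}$ — this adjustment is exactly the content of ``$J\cup C_s=J'\cup C_s$'' in Theorem~\ref{caratteri}(3)). Injectivity on $SM_w$ is then the translation of Theorem~\ref{caratteri}(3)(4): two systems in $SM_w$ giving the same matching must have the same $s=M(e)$, the same $J$ (pinned down now by the condition ``$M_{\mathcal S}(r)=sr$ iff $r\in J$'', which removes the $C_s$-ambiguity), the same $H$, and the same $M$ on $[e,w_0(H)]$, hence are equal. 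I expect the main obstacle to be the bookkeeping in the dictionary itself: verifying cleanly that the \emph{single} self-dual family of systems $\mathcal S$ accounts for \emph{both} the right systems and the left systems of \cite{CM}, including the degenerate overlaps where a matching is simultaneously of right-multiplication and left-multiplication type (the case $M_{st}=\rho_s=\lambda_s$ in Theorem~\ref{caratteri}(4)), and matching up the normalizations so that no special matching is counted twice and none is missed. Getting [S0]–[S2] to exactly carve out the image of the \cite{CM} systems, with the right treatment of the boundary cases flagged in the remark after Definition~\ref{sinistro}, is where the real work lies.
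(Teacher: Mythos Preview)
You are proving the wrong theorem. Theorem~\ref{caratteri} is the classification of special matchings via \emph{right and left systems} $(J,s,t,M_{st})$ from \cite{CM}; the present paper does not prove it at all but simply quotes it as \cite[Theorem~5.9]{CM} and uses it as a black box. Your proposal, by contrast, is a proof sketch for the paper's unnumbered Main Theorem in Section~\ref{sor}, which concerns the \emph{new} systems $\mathcal S=(J,H,M)$.

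This is visible throughout your write-up: you say ``the analogous classification (Theorem~\ref{caratteri}) is already available'' and then invoke it to conclude that $M_{\mathcal S}$ is special and that every special matching arises as some $M_{\mathcal S}$. Those are the first two bullets of the Main Theorem, not parts (1)--(2) of Theorem~\ref{caratteri}. Likewise your ``Bruhat-compatibility bullet'' is the third item of the Main Theorem (the formula $M(u)=a'M_{\mathcal S}(b')c'$), and your ``enumeration bullet'' is the fourth (the description via $SM_w$); none of these appear in Theorem~\ref{caratteri}, whose items (3)--(4) are instead about when $M_{\mathcal R}=M_{\mathcal R'}$ or $M_{\mathcal R}={}_{\mathcal L}M$ for right/left systems. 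Taken as an attempt at Theorem~\ref{caratteri}, your argument is therefore circular: it assumes the result it is meant to establish. If instead the intended target was the Main Theorem, your overall strategy does match what the paper does (build a dictionary between $\mathcal S$-systems and right/left systems, then appeal to Theorem~\ref{caratteri}), though the paper carries this out via Theorem~\ref{fa22}, Proposition~\ref{elba}, and Corollary~\ref{giannutri} rather than by the inductive descent-peeling you sketch for the third bullet.
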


\section{Preliminary results}

We fix an arbitrary Coxeter system $(W,S)$.

For $u,v\in W$, we say that $u$ is a \emph{prefix} of $v$ if $\ell(u)+\ell(u^{-1}v)=\ell(v)$. We similarly define a \emph{suffix}. The proof of the next result is easy and is left to the reader.
\begin{lem}\label{prefix}Let $u,w\in W$ be such that $u$ is a prefix of $w$. If  $u\in W_J$, then $u$ is a prefix of $_Jw$ and $\,^Jw$ is a suffix of $u^{-1}w$. 
\end{lem}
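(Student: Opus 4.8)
The statement to be proved is Lemma~\ref{prefix}: if $u$ is a prefix of $w$ and $u\in W_J$, then $u$ is a prefix of $_Jw$, and $\,^Jw$ is a suffix of $u^{-1}w$. My plan is to extract a reduced expression for $w$ compatible with the prefix assumption and then refactor it through the $W_J$-decomposition. First I would write $w = u\cdot v$ where $v=u^{-1}w$, so that $\ell(w)=\ell(u)+\ell(v)$; concatenating a reduced word for $u$ with a reduced word for $v$ gives a reduced word for $w$. Since $u\in W_J$, every letter appearing in the reduced word for $u$ lies in $J$.

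The next step is to bring in the factorization $w = {_Jw}\cdot{^Jw}$ from Proposition~\ref{fattorizzo} (in its left form), which is length-additive. The key observation is that $_Jw = w_0$ of the analogous statement is not needed; rather, I would argue that since $u\le w$ with $u\in W_J$, Proposition~\ref{mantiene} (left version) gives $u = {_Ju} \le {_Jw}$, but more to the point I want prefix, not just Bruhat order. For this I would use that $u$ is a prefix of $w$ and that left multiplication by $W_J$ "commutes" with taking the $^JW$-part: concretely, from $w=u\cdot v$ one has $^Jw = {^J(uv)}$, and since $u\in W_J$ one checks $^J(uv) = {^Jv}$ — indeed $uv = u\cdot{_Jv}\cdot{^Jv}$ is a length-additive factorization with $u\cdot{_Jv}\in W_J$ and $^Jv\in{^JW}$, so by uniqueness $^Jw = {^Jv}$ and $_Jw = u\cdot{_Jv}$. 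The first of these equalities immediately shows $^Jw$ is a suffix of $v=u^{-1}w$ (with length additivity $\ell(v)=\ell(_Jv)+\ell(^Jw)$), and the second shows $u$ is a prefix of $_Jw$ (with $\ell(_Jw)=\ell(u)+\ell(_Jv)$).

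So the whole argument reduces to justifying the two displayed equalities, i.e. that $uv = (u\cdot{_Jv})\cdot{^Jv}$ is a \emph{reduced} (length-additive) factorization with the first factor in $W_J$ and the second in $^JW$. Length-additivity $\ell(uv)=\ell(u)+\ell({_Jv})+\ell({^Jv})$ follows from $\ell(uv)=\ell(u)+\ell(v)$ (prefix hypothesis) together with $\ell(v)=\ell({_Jv})+\ell({^Jv})$ (Proposition~\ref{fattorizzo}); in particular $\ell(u\cdot{_Jv})=\ell(u)+\ell({_Jv})$, so $u\cdot{_Jv}$ is a genuine $\cdot$-product lying in $W_J$. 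Then $^Jv\in{^JW}$ already, so uniqueness in the left version of Proposition~\ref{fattorizzo} applied to $w=uv$ forces ${_Jw}=u\cdot{_Jv}$ and ${^Jw}={^Jv}$.

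\textbf{Main obstacle.} The only subtle point is ensuring the factorization $w = (u\cdot{_Jv})\cdot{^Jv}$ really is the \emph{canonical} one, i.e. that $u\cdot{_Jv}\in W_J$ (clear, since both factors are in $W_J$) and $^Jv\in{^JW}$ (clear by definition), and that the total length adds up — this last is where the prefix hypothesis is used essentially, and without it the claim is false. Everything else is bookkeeping with Proposition~\ref{fattorizzo}; no braid-move or Subword-Property argument is needed, which is why the paper leaves it to the reader.
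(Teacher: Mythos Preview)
Your argument is correct and is precisely the elementary verification the paper has in mind (it leaves the proof to the reader). The only step worth making explicit, which you handle, is that from $\ell(w)=\ell(u)+\ell({_Jv})+\ell({^Jv})$ and subadditivity one gets $\ell(u\,{_Jv})=\ell(u)+\ell({_Jv})$ and $\ell(w)=\ell(u\,{_Jv})+\ell({^Jv})$, so uniqueness in the left version of Proposition~\ref{fattorizzo} applies.
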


\begin{lem}
\label{ciao}
 Let $w\in W$ and $J,K\subset S$. 
 Then $w^J\in W_K$ if and only if $^Kw\in W_J$, and in this case $w=w^J \cdot v \cdot \,^Kw$ with $\ell(w)=\ell(w^J)+\ell(v)+\ell(\,^Kw)$ and $v\in W_{J\cap K}$.
\end{lem}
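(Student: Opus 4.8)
The plan is to reduce everything to a single application of Lemma~\ref{prefix} to the element $w^J$, which is always a prefix of $w$ because $w=w^J\cdot w_J$, and then to recover the left/right dual implication by passing to $w^{-1}$.

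First I would treat the implication $w^J\in W_K\Rightarrow{}^Kw\in W_J$ together with the factorization. Assuming $w^J\in W_K$, Lemma~\ref{prefix} (with the subset $K$ in place of $J$, applied to the prefix $w^J$ of $w$) gives that $w^J$ is a prefix of ${}_Kw$ and that ${}^Kw$ is a suffix of $(w^J)^{-1}w=w_J$. The latter yields ${}^Kw\in W_J$ at once, since a reduced word for ${}^Kw$ is a terminal segment of a reduced word for $w_J\in W_J$, hence uses only letters of $J$. Writing ${}_Kw=w^J\cdot v$ with $v=(w^J)^{-1}\,{}_Kw$ (length-additively) and substituting into $w={}_Kw\cdot{}^Kw$, I get $w=w^J\cdot v\cdot{}^Kw$ with $\ell(w)=\ell(w^J)+\ell(v)+\ell({}^Kw)$. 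Now $v$ is a product of two elements of $W_K$, so $v\in W_K$; comparing $w=w^J\cdot v\cdot{}^Kw$ with $w=w^J\cdot w_J$ forces $w_J=v\cdot{}^Kw$ length-additively, so $v=w_J\,({}^Kw)^{-1}\in W_J$ as well. Hence $v\in W_J\cap W_K=W_{J\cap K}$, using the standard fact that supports are invariant under braid moves.

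For the converse $\,{}^Kw\in W_J\Rightarrow w^J\in W_K$, I would invoke the previous implication for $w^{-1}$. Taking inverses in $w={}_Kw\cdot{}^Kw$ and in $w=w^J\cdot w_J$ and using uniqueness of the parabolic factorizations of Proposition~\ref{fattorizzo}, one has $(w^{-1})^K=({}^Kw)^{-1}$ and ${}^J(w^{-1})=(w^J)^{-1}$. Thus $(w^{-1})^K\in W_J$, and the already-proved implication applied to $w^{-1}$ (with $K$ and $J$ interchanged) gives ${}^J(w^{-1})\in W_K$, i.e. $w^J\in W_K$. The factorization statement was already established in the first step, so this finishes the argument.

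I do not expect a genuine obstacle here; the only care needed is in bookkeeping the left versus right parabolic factorizations and in feeding Lemma~\ref{prefix} the prefix $w^J$ rather than $w_J$. The single external input is the identity $W_J\cap W_K=W_{J\cap K}$, which is standard for Coxeter groups.
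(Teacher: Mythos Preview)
Your proof is correct and follows essentially the same approach as the paper: both apply Lemma~\ref{prefix} to the prefix $w^J$ of $w$ to get the forward implication and the factorization, and then obtain the converse by symmetry. The paper simply writes ``symmetrically'' and ``straightforward'' where you spell out the inverse trick $(w^{-1})^K=({}^Kw)^{-1}$, ${}^J(w^{-1})=(w^J)^{-1}$ and the computation showing $v\in W_J\cap W_K=W_{J\cap K}$.
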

\begin{proof}
Assume $w^J\in W_K$. By Lemma~\ref{prefix}, $w^J$ is a prefix of $_Kw$ and $\,^Kw$ is a suffix of $w_J$, and in particular $^Kw\in W_J$. Symmetrically,  Lemma~\ref{prefix} also implies that, if  $^Kw\in W_J$, then $w^J\in W_K$. The second part is now straightforward.
\end{proof}

The reader is invited to recall the definitions in Section \ref{sor}.
Given a system $\mathcal S=(J,H,M)$ for $w$, we always denote $M(e)$ by $s$, the $s$-complement of $J$ by $K$ and, if $|H|=2$, we let $t\in S$ be such that $H=\{s,t\}$.

Our target is to show that special matchings can be described in terms of systems in a very precise way.

The first goal is to identify a class of elements admitting $\mathcal S$-factorizations. This class contains the lower interval $[e,w]$. In order to do so, we separate the proof in two cases. Although this seems unavoidable (see Example \ref{esempio}), the resulting classification is uniform and self-dual.

\begin{defi}
 We say that a system $\mathcal S=(J,H,M)$ for $w$ is of the \emph{first} (respectively, \emph{second}) \emph{kind} if $H\subseteq K$ (respectively, $H\subseteq J$).
\end{defi}
Note that a system $\mathcal S=(J,H,M)$ is of both the first and the second kind if and only if $|H|=1$. 

We now concentrate on systems of the first kind.
Given a system $\mathcal S=(J,H,M)$  of the first kind  for some $w\in W$, we let  
\begin{align*}
 a_\mathcal S(u)&=(u^J)^H \\ b_\mathcal S(u)&=(u^J)_H\cdot\, _H\!(u_J)\\ c_\mathcal S(u)&=\,^H\!(u_J)
\end{align*}
for every $u\in W$, and  
\[
 W_{\mathcal S}=\{u\in W \colon\, u_0(H)\leq w_0(H),\, a_{\mathcal S}(u)\leq a_{\mathcal S}(w)\}.
\]

Observe that $W_\mathcal S$ is an order ideal which contains the lower Bruhat interval $[e,w]$. 

We show next that elements in $W_\mathcal S$ admit $\mathcal S$-factorizations. We need the following result.

\begin{lem}
\label{sseesolose}
 Let $\mathcal S=(J,H,M)$ be a system for $w$ of the first kind. If $u\in W_{S}$, then
 \begin{enumerate}
 \item  $c_{\mathcal S}(u)\geq \;^H(^Ku)$,
 \item $s\leq c_{\mathcal S}(u)$ if and only if $ s\leq \;^H( ^Ku)$.
 \end{enumerate}
\end{lem}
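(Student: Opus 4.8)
The statement to prove is Lemma~\ref{sseesolose}: for a system $\mathcal S=(J,H,M)$ of the first kind for $w$ and $u\in W_S$, we have (1) $c_{\mathcal S}(u)\geq\,^H(^Ku)$ and (2) $s\leq c_{\mathcal S}(u)$ if and only if $s\leq\,^H(^Ku)$. Recall $c_{\mathcal S}(u)=\,^H(u_J)$ and that ``first kind'' means $H\subseteq K$. The guiding idea is that first kind means $H\subseteq K$, so parabolic quotients by $J$ and by $K$ interact well with $H$, and the whole thing should follow from the standard factorization machinery in Proposition~\ref{fattorizzo} together with Proposition~\ref{mantiene} and Lemma~\ref{ciao}.

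\textbf{First part.} The plan is to compare the two parabolic factorizations $u={^Ku}\cdot u_K = u^J\cdot u_J$ with respect to $H$, and push everything through $^H(-)$. Since $H\subseteq K$, a descent $h\in H$ on the left of $u_K$ is a descent on the left of $u$ (because $u={^Ku}\cdot u_K$ with $^Ku\in{^KW}$, so its left descents avoid $K\supseteq H$, hence left descents of $u$ in $H$ come from $u_K$). Actually the cleanest route: I would apply Proposition~\ref{mantiene} to the inequality $^Ku\leq u$ — wait, that is not an inequality of the right shape. Instead: write $u={^Ku}\cdot u_K$; then $u_K$ is a suffix of $u$, and I want to relate $^H(u_J)$ with $^H(^Ku)$. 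Note $^Ku$ is a \emph{prefix} of $u$. Using Lemma~\ref{prefix}-type reasoning (or directly the subword property), $^Ku \leq u$, so by Proposition~\ref{mantiene} $^H(^Ku)\leq {^Hu}$. On the other hand $^Hu$ is a suffix of $u$, and I want $^H(u_J)$; but $u_J$ is a suffix of $u$ too (with $u=u^J\cdot u_J$), and $^H(u_J)$ is a suffix of $u_J$, hence a suffix of $u$, so $^H(u_J)\leq u$. To get the sharp inequality $^H(u_J)\geq {^H(^Ku)}$ I would argue: the element $^H(^Ku)\in{^HW}$, it is $\leq u$, it lies in $W_{S\setminus?}$... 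Here I would invoke $H\subseteq K$ to conclude that $^H(^Ku)\in W_J$ (since $^Ku\in W_J$ by Lemma~\ref{ciao} applied to property [S1], which gives $w^J\in W_K$ hence $^Kw\in W_J$, and more generally $u\in W_S=W_\mathcal S$ behaves the same way because of the order-ideal structure — this is the step needing care). Once $^H(^Ku)\in W_J\cap{^HW}$ and $^H(^Ku)\leq u$, comparing with the unique factorization $u=u^J\cdot u_J$ and then $u_J={_H(u_J)}\cdot{^H(u_J)}$ forces $^H(^Ku)\leq{^H(u_J)}$ by the subword property, since any reduced word for $u$ can be taken through $u^J$, then $_H(u_J)\in W_H$, then $^H(u_J)$, and the $^HW$-part $^H(^Ku)$ must embed into the $^HW$-tail. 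This is the heart of (1).

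\textbf{Second part.} For the ``if and only if'' in (2): the nontrivial direction is $s\leq c_{\mathcal S}(u)=\,^H(u_J)\implies s\leq{^H(^Ku)}$, since the reverse follows immediately from part (1). Here I would use that $s=M(e)$ and, crucially, $s\notin C_s$ while $s\in H$ (indeed $s=M(e)$ and $M$ is a matching of $[e,w_0(H)]$, so $s\in H$). Since $\mathcal S$ is of the first kind, $H\subseteq K$, and $s\in H\subseteq K$; also $s\in J$ is false in general — rather $J\cap K=C_s$ and $s\notin C_s$, so $s\in K\setminus J$. Now $u_J\in W_J$, so $\textrm{Supp}(u_J)\subseteq J$, which does \emph{not} contain $s$. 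Therefore $s\not\leq u_J$, hence $s\not\leq{^H(u_J)}=c_{\mathcal S}(u)$, so the hypothesis ``$s\leq c_{\mathcal S}(u)$'' is \emph{never} satisfied, and the biconditional holds vacuously — both sides are false (the right side fails too since $^H(^Ku)$ is a suffix of $^Ku\in W_J$, again avoiding $s$). So (2) reduces to checking $s\notin J$, which is exactly $s\in K\setminus J$, immediate from $C_s\subseteq J$, $s\notin C_s$, and $J\cap K=C_s$.

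\textbf{Main obstacle.} I expect the delicate point to be making rigorous the claim that for \emph{all} $u\in W_S=W_\mathcal S$ (not just for $w$ itself) one still has the relevant parabolic containments — e.g.\ that $^Ku\in W_J$, or at least that the factorization $u=a_{\mathcal S}(u)\cdot b_{\mathcal S}(u)\cdot c_{\mathcal S}(u)$ has the length-additivity and support properties needed. For $w$ this is property [S1] via Lemma~\ref{ciao}; for general $u\in W_\mathcal S$ one must deduce it from $u_0(H)\leq w_0(H)$ and $a_{\mathcal S}(u)\leq a_{\mathcal S}(w)$ using Proposition~\ref{mantiene} and Lemma~\ref{ciao}, and carefully track how $^H(-)$, $(-)^J$, and $(-)^K$ compose. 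The combinatorial bookkeeping of which parabolic-quotient operations commute, and verifying the needed support inclusions hold for every $u$ in the order ideal and not merely for $w$, is where the real work lies; once that is in place, parts (1) and (2) fall out as above.
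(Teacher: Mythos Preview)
Your argument for part~(2) contains a fatal error: you claim $s\notin C_s$, and hence $s\notin J$, which would make the biconditional vacuous. But $s$ commutes with itself, so $s\in C_s$; since Definition~\ref{sistema} requires $C_{M(e)}\subseteq J$, we have $s\in J$ (and also $s\in K$, since $J\cap K=C_s$). So $u_J$ may very well contain $s$ in its support, $c_{\mathcal S}(u)={^H(u_J)}$ can have $s\leq c_{\mathcal S}(u)$, and the statement is not vacuous at all. The paper's proof of~(2) is substantive: one factors $c_{\mathcal S}(u)={_K(c_{\mathcal S}(u))}\cdot{^Ku}$, observes that $_K(c_{\mathcal S}(u))\in W_J\cap W_K=W_{C_s}$, and then argues that $s\not\leq{_K(c_{\mathcal S}(u))}$ because otherwise $s$ would be a left descent of $c_{\mathcal S}(u)\in{^HW}$, a contradiction. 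This forces $s\leq{^Ku}={^H(^Ku)}$.

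Your sketch for part~(1) is also incomplete in a way that the paper's proof avoids. You propose to show $^H(^Ku)\in W_J\cap{^HW}$ and $^H(^Ku)\leq u$, and then conclude $^H(^Ku)\leq{^H(u_J)}$ ``by the subword property \dots\ the $^HW$-part must embed into the $^HW$-tail.'' But being $\leq u$ and lying in $W_J$ does \emph{not} imply being $\leq u_J$ (for instance in $S_3$ with $J=\{s_1\}$ and $u=s_1s_2$, we have $s_1\leq u$, $s_1\in W_J$, yet $u_J=e$). The paper instead proves something stronger and more directly: since $a_{\mathcal S}(u)\leq a_{\mathcal S}(w)\leq w^J\in W_K$ (this is where $u\in W_{\mathcal S}$ is used) and $b_{\mathcal S}(u)\in W_H\subseteq W_K$ (first kind), the prefix $a_{\mathcal S}(u)b_{\mathcal S}(u)$ of $u$ lies in $W_K$, so Lemma~\ref{prefix} gives that $^Ku$ is an honest \emph{suffix} of $c_{\mathcal S}(u)$, whence $c_{\mathcal S}(u)\geq{^Ku}={^H(^Ku)}$.
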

\begin{proof}
 (1). Since $a_\mathcal S(u)b_{\mathcal S}(u) \in W_K$, Lemma~\ref{prefix} implies that $^Ku$ is a suffix of  $c_{\mathcal S}(u)$. Thus   $c_{\mathcal S}(u)\geq \;^Ku = \;^H(^Ku))$ .\\
 (2). The \lq\lq  if part\rq\rq follows by (1). 
 
 Suppose $s\leq c_{\mathcal S}(u)$.  We already know that $^K c_{\mathcal S}(u)= \; ^Ku $ (by Lemma~\ref{mantiene} together with the fact that $^Ku$ is a suffix of  $c_{\mathcal S}(u)$). Hence $c_{\mathcal S}(u)= \;_K(c_{\mathcal S}(u))  \cdot \,^Ku $  and $\,_K(c_{\mathcal S}(u)) \in W_J\cap W_K=W_{C_s}$ since $c_{\mathcal S}(u) \in W_J$. Thus $s\not \leq \,_K(c_{\mathcal S}(u))$  otherwise $s$ would be a left descent of $\,_K(c_{\mathcal S}(u))$ (since all letters in $_K(c_{\mathcal S}(u))$ commute with $s$), so  also a left descent of $c_{\mathcal S}(u)$, which is impossible since $c_{\mathcal S}(u)\in \:^HW$. Hence $s\leq \;^Ku$ and we get the assertion since $^Ku=\;^H(^Ku)$ (by hypothesis, $\mathcal S$ is of the first kind, so $H\subseteq K$). 
\end{proof}

\begin{lem}
\label{see}
 Let $\mathcal S=(J,H,M)$ be a system for $w$ of the first kind. If $u\in W_{S}$, then
\[
 u=a_\mathcal S(u) \cdot b_\mathcal S(u) \cdot c_\mathcal S(u)
\]
is an $\mathcal S$-factorization for $u$.
\end{lem}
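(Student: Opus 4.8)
The goal is to show that, for $u\in W_{\mathcal S}$, the factorization $u=a_{\mathcal S}(u)\cdot b_{\mathcal S}(u)\cdot c_{\mathcal S}(u)$ is an $\mathcal S$-factorization. The plan is to verify the four defining conditions of an $\mathcal S$-factorization one at a time, exploiting the fact that a system of the first kind has $H\subseteq K$.

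First I would establish the length additivity $\ell(u)=\ell(a_{\mathcal S}(u))+\ell(b_{\mathcal S}(u))+\ell(c_{\mathcal S}(u))$. This follows by chaining two applications of Proposition~\ref{fattorizzo}: write $u=u^J\cdot u_J$ with $\ell$ additive, then decompose $u^J=(u^J)^H\cdot (u^J)_H$ and $u_J={_H(u_J)}\cdot{^H(u_J)}$, again with $\ell$ additive in each; the product $(u^J)_H\cdot {_H(u_J)}$ is the middle term $b_{\mathcal S}(u)$, and I would check its internal additivity as part of the next step. Next, for the membership conditions: $a_{\mathcal S}(u)=(u^J)^H$ lies in $W^H$ by definition of the $H$-quotient, and it lies in $W_K$ because $u^J\in W_K$ (this is Property [S1] of the system, via Lemma~\ref{ciao}), so $(u^J)^H\le u^J\in W_K$ gives $(u^J)^H\in W_K$ by Proposition~\ref{mantiene}. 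Similarly $c_{\mathcal S}(u)={^H(u_J)}\in {^HW}$ by definition, and it lies in $W_J$ since $u_J\in W_J$. For $b_{\mathcal S}(u)\in W_H$: both $(u^J)_H$ and ${_H(u_J)}$ lie in $W_H$ by construction, and I need the product to be length-additive so that $b_{\mathcal S}(u)\in W_H$ with the claimed length; here one uses that $(u^J)_H\in W_H\cap{^HW}$ (a parabolic-of-$W_H$ fact) together with the additivity already bundled from $u=u^J\cdot u_J$ — more carefully, $(u^J)_H=(u^J)_H$ is the $H_J$-part and one invokes the standard fact that for $x\cdot y$ with $x\in W^{H'}$, $y\in W_{H'}$ one has additivity, applied inside $W_H$ after re-examining the factorization of $u$ restricted appropriately.

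The genuinely delicate conditions are the support conditions: $|\mathrm{Supp}_H(a_{\mathcal S}(u))|\le 1$ with the $\lambda_\alpha$-commutation, and $|\mathrm{Supp}_H(c_{\mathcal S}(u))|\le 1$ with the $\rho_\beta$-commutation. For the $a$-side, I would compare $a_{\mathcal S}(u)=(u^J)^H$ with $a_{\mathcal S}(w)=(w^J)^H$: since $u\in W_{\mathcal S}$ we have $a_{\mathcal S}(u)\le a_{\mathcal S}(w)$, hence $\mathrm{Supp}_H(a_{\mathcal S}(u))\subseteq\mathrm{Supp}_H(a_{\mathcal S}(w))=\mathrm{Supp}_H((w^J)^H)$, which has size $\le 1$ by Property [S2], and if $\alpha$ is in this set then [S2] gives that $M$ commutes with $\lambda_\alpha$; so $\mathrm{Supp}_H(a_{\mathcal S}(u))$ is either empty or equals $\{\alpha\}$ for that same $\alpha$, and the commutation is inherited. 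For the $c$-side, the point is that $\mathrm{Supp}_H(c_{\mathcal S}(u))=\mathrm{Supp}_H({^H(u_J)})$ must be controlled by $\mathrm{Supp}_H({^H({^Kw})})$; here Lemma~\ref{sseesolose} is the crucial input — part (1) gives $c_{\mathcal S}(u)\ge{^H({^Ku})}$ and part (2) controls whether $s$ appears. Combined with $u\le w$ (so ${^Ku}\le{^Kw}$ by Proposition~\ref{mantiene}, then ${^H({^Ku})}\le{^H({^Kw})}$), and using that $H\subseteq K$ forces any letter of $H$ in $c_{\mathcal S}(u)$ to already be "visible" in ${^H({^Ku})}$, one concludes $\mathrm{Supp}_H(c_{\mathcal S}(u))\subseteq\mathrm{Supp}_H({^H({^Kw})})$, whose size is $\le 1$ by [S2], with the $\rho_\beta$-commutation again inherited from [S2].

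The main obstacle I anticipate is precisely this last containment $\mathrm{Supp}_H(c_{\mathcal S}(u))\subseteq\mathrm{Supp}_H({^H({^Kw})})$: a priori $c_{\mathcal S}(u)={^H(u_J)}$ is built from $u_J$, not directly from ${^Ku}$, and a letter of $H$ could in principle enter $c_{\mathcal S}(u)$ through $u_J$ without being forced by ${^Ku}$. Resolving this requires the full strength of Lemma~\ref{sseesolose} (which is why it was proved first) together with the first-kind hypothesis $H\subseteq K$ — the latter ensures $^Ku={^H({^Ku})}$ so that the "suffix" information about $c_{\mathcal S}(u)$ coming from ${^Ku}$ already lives in $W_H$ and pins down the $H$-support. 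Everything else is bookkeeping with the standard parabolic factorizations.
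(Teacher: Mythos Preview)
Your outline follows the paper's structure, but there are two concrete gaps.

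First, you repeatedly invoke $u\le w$ (to get $u^J\in W_K$ via Proposition~\ref{mantiene}, and later ${}^Ku\le{}^Kw$), but the hypothesis is only $u\in W_{\mathcal S}$, which does not imply $u\le w$. The definition of $W_{\mathcal S}$ gives you $a_{\mathcal S}(u)\le a_{\mathcal S}(w)$ and $u_0(H)\le w_0(H)$, nothing more. For the $a$-side this is exactly what is needed: from $a_{\mathcal S}(u)\le a_{\mathcal S}(w)=(w^J)^H\le w^J\in W_K$ (the last by~[S1]) you get $a_{\mathcal S}(u)\in W_K$ directly, without the unjustified intermediate claim $u^J\in W_K$. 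The support condition on $a_{\mathcal S}(u)$ then follows from the one on $(w^J)^H$ in~[S2], since $a_{\mathcal S}(u)\le (w^J)^H$.

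Second, on the $c$-side you miss the key simplification that makes the argument short. Since $\mathcal S$ is of the first kind, $H\subseteq K$; combined with $J\cap K=C_s$ this forces $H\cap J=\{s\}$ (when $|H|=2$ the element $t$ lies in $K\setminus C_s$, hence $t\notin J$). But $c_{\mathcal S}(u)={}^H(u_J)\in W_J$, so immediately $\mathrm{Supp}_H(c_{\mathcal S}(u))\subseteq H\cap J=\{s\}$ --- the bound $|\mathrm{Supp}_H(c_{\mathcal S}(u))|\le 1$ is automatic, with no comparison to $w$ required. The only remaining check is: if $s\le c_{\mathcal S}(u)$ then $M$ commutes with $\rho_s$. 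This is precisely the point of Lemma~\ref{sseesolose}(2), which converts $s\le c_{\mathcal S}(u)$ into $s\le{}^H({}^Ku)$, after which one appeals to~[S2]. Your route through the containment $\mathrm{Supp}_H(c_{\mathcal S}(u))\subseteq\mathrm{Supp}_H({}^H({}^Kw))$ and the phrase ``$H\subseteq K$ forces any letter of $H$ in $c_{\mathcal S}(u)$ to already be visible in ${}^H({}^Ku)$'' is really this observation in disguise, but without stating $H\cap J=\{s\}$ you have not actually justified it, and you have needlessly introduced the assumption $u\le w$.

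(Your worry about length-additivity inside $b_{\mathcal S}(u)$ is unnecessary: once $u=(u^J)^H\cdot(u^J)_H\cdot{}_H(u_J)\cdot{}^H(u_J)$ is length-additive as a four-fold product, every adjacent sub-product is automatically length-additive.)
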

\begin{proof}
We show that the 4 properties of the definition of an $\mathcal S$-factorization are satisfied.
\begin{itemize}
\item  The additivity of lengths in the factorization $u=a_\mathcal S(u) \cdot b_\mathcal S(u) \cdot c_\mathcal S(u)$ is clear  by Proposition~\ref{fattorizzo}.
 
  \item Since $a_\mathcal S(u)=(u^J)^H \leq (w^J)^H\leq w^J$ and $w^J \in W_K$ by Property~[S1], we have $a_\mathcal S(u) \in (W_K)^H$, and $a_\mathcal S(u)$ satisfies also the other desired properties since $(w^J)^H$ does by Property~[S2].

  \item Clearly $b_\mathcal S(u) \in W_H$.
  \item Since $c_{\mathcal S}(u)=\,^H\!(u_J)\leq u_J$, we have $c_{\mathcal S}(u)\in \,^H(W_J)$. Since $\mathcal S$ is of the first kind we have $H\cap J=\{s\}$. By Lemma~\ref{sseesolose}, if $s\leq c_{\mathcal S}(u)$ then $s\leq \;^H( \;^Ku)$, and the desired property follows by Property~[S2]. 
 \end{itemize}
\end{proof}

\begin{exa}
\label{esempio}
 We show in this example that the assumption that $\mathcal S$ is of the first kind is necessary for Lemma \ref{see}. We let $J=S=\{r,s,t\}$, $H=\{s,t\}$, $m_{s,r} = 2$, $m_{r,t}\geq 3$,  $m_{s,t}\geq 5$ and $w=rtsts$. We let $M$ be any matching of $[e,tsts]$ which is not a multiplication matching. One can check that  $(J,H,M)$ is a system for $w$ of the second kind. Nevertheless one can also check that $c_{\mathcal S}(w)=rtsts$, so $\textrm{Supp}_H(c_{\mathcal S}(w))=\{s,t\}$. 
\end{exa}

For a system $\mathcal S=(J,H,M)$ of the first kind  and for all $u\in W_{\mathcal S}$, we let 
\[
 M_{\mathcal S}(u)=a_{\mathcal S}(u)M(b_{\mathcal S}(u))c_{\mathcal S}(u).
\]
We observe that the restriction of $M_{\mathcal S}$ to $[e,w_0(H)]$ agrees with $M$.
Our next target is to show that $M_\mathcal S$ is well behaved with respect to the factorization $u=a_\mathcal S(u) \cdot b_\mathcal S(u) \cdot c_\mathcal S(u)$: we need the following preliminary result where, if $A,B\subset W$, we let
\[
A\cdot B=\{ab \colon\, a\in A,\, b\in B\}.
\]

\begin{lem}\label{capraia1}
 Let $\mathcal S=(J,H,M)$ be a system for $w$ of the first kind. If  $u\in W_{\mathcal S}$, then
 \[
  a_\mathcal S (u) \; M(b_\mathcal S (u) )  \in  W^J \cdot W_{s}.
 \]
 where $s=M(e)$.
\end{lem}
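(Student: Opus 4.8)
The plan is to analyze the word $a_\mathcal{S}(u)\,M(b_\mathcal{S}(u))$ by first understanding the structure of $M$ as a matching of $[e,w_0(H)]$. Since $M(e)=s$, the matching $M$ either multiplies on the left/right by $s$ (a multiplication matching, which by [S0] happens exactly when $|H|=1$) or, when $|H|=2$ with $H=\{s,t\}$, it is one of the genuinely ``dihedral'' matchings of $[e,w_0(s,t)]$ sending $e\mapsto s$. In all cases, for $b = b_\mathcal{S}(u)\in W_H$, the element $M(b)$ differs from $b$ by left- or right-multiplication by a reflection in $W_H$, and $M(b)b^{-1}$ (or $b^{-1}M(b)$) lies in $W_H$. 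The goal is to show $a_\mathcal{S}(u)M(b_\mathcal{S}(u))$ can be written as $p\cdot q$ with $p\in W^J$ and $q\in W_{\{s\}}$, i.e. $q\in\{e,s\}$.

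First I would record that $a_\mathcal{S}(u)=(u^J)^H\in W^H\cap W_K$ (using Property~[S1] and $a_\mathcal{S}(u)\le a_\mathcal{S}(w)=(w^J)^H\le w^J$) and that $a_\mathcal{S}(u)$ has at most one support generator in $H$, which by $H\subseteq K$ and $H\cap J=\{s\}$ must be one of the generators in $H\setminus\{s\}$ if $|H|=2$, or nothing at all. Meanwhile $M(b_\mathcal{S}(u))\in W_H$. The product $a_\mathcal{S}(u)M(b_\mathcal{S}(u))$ is thus a product of an element of $W_K\cap W^H$ with an element of $W_H$. The key subtlety is that this product need not have its $W^J$-part automatically lying in the right place — we need to control the $J$-part. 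Here is where I would split into the two genuine cases. If $|H|=1$, so $H=\{s\}$, then $b_\mathcal{S}(u)\in W_{\{s\}}$ and $M(b_\mathcal{S}(u))\in\{e,s\}$ (multiplication matching by [S0]), so $a_\mathcal{S}(u)M(b_\mathcal{S}(u))$ is already of the required form with $p=a_\mathcal{S}(u)\in W^J$ (since $a_\mathcal{S}(u)\le (w^J)^{\{s\}}\le w^J$ has no descent in $J$; note $W^H=W^{\{s\}}$ contains $a_\mathcal{S}(u)$, and any descent in $J\setminus\{s\}$ would contradict $a_\mathcal{S}(u)\in W_K$ with $J\cap K=C_s$ forcing that descent to commute with $s$ — a more careful bookkeeping is needed).

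For the case $|H|=\{s,t\}$: here $b_\mathcal{S}(u)\in W_{\{s,t\}}$ and $M(b_\mathcal{S}(u))\in W_{\{s,t\}}$. Write $M(b_\mathcal{S}(u)) = x\cdot y$ with $x\in W^{\{s\}}\cap W_{\{s,t\}}$ and $y\in W_{\{s\}}=\{e,s\}$ its $\{s\}$-coset factorization. I then want to show $a_\mathcal{S}(u)\cdot x\in W^J$, so that $p=a_\mathcal{S}(u)x$ and $q=y$ works. To do this I would verify that $a_\mathcal{S}(u)x$ has no right descent in $J$. Any right descent of $a_\mathcal{S}(u)x$ lying in $J$: if it is $s$, that is excluded because $x\in W^{\{s\}}$ and $a_\mathcal{S}(u)$ commutes past... — actually this needs the concrete description of the dihedral matching $M$ and of $a_\mathcal{S}(u)$'s single possible $H$-generator, which is $t$, not $s$. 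If the descent is some $r\in J\setminus\{s\}$, then $r\notin H$, so $r$ commutes with all of $x\in W_{\{s,t\}}$ unless... — and here I would invoke that $a_\mathcal{S}(u)\in W_K$ combined with $J\cap K=C_s$ (so $r\in J\setminus C_s$ cannot be in $\mathrm{Supp}(a_\mathcal{S}(u))$ unless forced), using Lemmas~\ref{lemma0cap}--\ref{lemma2cap} to push the descent argument through. The main obstacle will be precisely this descent analysis: showing that multiplying $a_\mathcal{S}(u)$ by the $W^{\{s\}}$-part of $M(b_\mathcal{S}(u))$ does not create a descent in $J$, which requires carefully combining the support constraints of Property~[S2] (controlling $\mathrm{Supp}_H(a_\mathcal{S}(u))$ and the commutation of $M$ with $\lambda_\alpha$), the structure of dihedral special matchings of $[e,w_0(s,t)]$, and the $s$-complementarity $J\cap K=C_s$. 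Once the descent-in-$J$ claim is established, Proposition~\ref{fattorizzo} gives the factorization $a_\mathcal{S}(u)M(b_\mathcal{S}(u)) = (a_\mathcal{S}(u)x)\cdot y \in W^J\cdot W_{\{s\}}$ with additive lengths, completing the proof.
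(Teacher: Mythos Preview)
Your overall strategy matches the paper's: strip a factor in $W_s$ from $M(b_\mathcal{S}(u))$ and show the remaining product lies in $W^J$ via a right-descent analysis using Lemmas~\ref{lemma0cap}--\ref{lemma2cap} together with $J\cap K=C_s$. Two remarks on the setup: first, your claim that the single element of $\textrm{Supp}_H(a_\mathcal{S}(u))$ must lie in $H\setminus\{s\}$ is not correct --- Property~[S2] allows either $s$ or $t$, and the paper treats both possibilities in the case analysis; second, your $|H|=1$ argument is muddled (in particular $v\le w^J$ does \emph{not} imply $v\in W^J$), but the paper bypasses this entirely by observing at the outset that $a_\mathcal{S}(u)\cdot b_\mathcal{S}(u)=u^J\cdot {}_{\{s\}}(u_J)\in W^J\cdot W_s$, so whenever $M(b)=bs$ (in particular whenever $|H|=1$) the lemma is immediate.

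The genuine gap is the endgame. Suppose, as you do, that $j\in J\setminus\{s\}$ were a right descent of $a_\mathcal{S}(u)\cdot x$ (your $x$ coincides with the paper's $M(b)\varepsilon$). Lemma~\ref{lemma0cap} gives $j\in D_R(a_\mathcal{S}(u))$, hence $j\in\textrm{Supp}(a_\mathcal{S}(u))\subseteq K$, hence $j\in C_s$; further case work (this is where Lemmas~\ref{lemma00cap} and~\ref{lemma2cap} and the commutation of $M$ with $\lambda_\alpha$ from [S2] are actually used) shows $j$ commutes with $t$ as well. But at this point you have not derived a contradiction --- nothing so far forbids an element of $C_s$ commuting with $t$ from being a right descent of $a_\mathcal{S}(u)$. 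The paper's key closing move, which your sketch is missing, is to \emph{transfer the descent back to $u^J$}: since $j$ commutes with all of $W_H$, and since we already know $a_\mathcal{S}(u)\cdot b_\mathcal{S}(u)\,\varepsilon'=u^J$ for some $\varepsilon'\in W_s$, one gets $u^J j=(a_\mathcal{S}(u)j)\cdot b_\mathcal{S}(u)\varepsilon'$ with $\ell(a_\mathcal{S}(u)j)=\ell(a_\mathcal{S}(u))-1$, forcing $j\in D_R(u^J)$ and contradicting $u^J\in W^J$. Without this comparison with the already-available factorization of $ab$, the descent analysis does not close.
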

\begin{proof}
For short,  we let $a=a_\mathcal S (u)$, $b=b_\mathcal S (u)$, and $c=c_\mathcal S (u)$. By definition,   $ a \cdot b=u^J \cdot  \,  _{s} (u_{J})\in W^J  \cdot W_{s} $. The result holds if $ M( {b}) =  bs $.  This happens, in particular, if  $|H|=1$ (that is, $H=\{s\}$). 
 
 We can therefore assume $M(b)\neq bs$ and $|H|=2$, $H=\{s,t\}$. Note that, in particular, $t\leq M(b)$. We let $\varepsilon \in \{e,s\}$ be such that $s\notin D_R(a \cdot M(b)\varepsilon)$, and we show that $a \cdot M(b)\varepsilon\in W^J$. Otherwise, there exists $j\in J\setminus\{s\}$ such that $j\in D_R(a\cdot M(b) \varepsilon)$.
 We claim that  
\begin{enumerate}
\item $j\in D_R(a) $,
\item $j$ commutes with $s$ and $t$.
\end{enumerate}

Lemma~\ref{lemma0cap} immediately implies that (1) holds. In particular,  $j\leq a=(u^J)^H\leq (w^J)^H\leq w^J\in W_K$, so $j\in K\cap J=C_s$, and hence  $j$ commutes with $s$. We also need to show that $j$ commutes with $t$.
If $t \not \leq a$, we may conclude using Lemma~\ref{lemma00cap} applied to $ aM(b)\varepsilon$. Assume $t  \leq a$, so $M$ commutes with $\lambda_t$ by the definition of $\mathcal S$-factorization. If $M(b)\varepsilon= t$, then
$$e=\lambda_t ( M (b) \varepsilon)= \lambda_t(M(b)) \varepsilon= M (\lambda_t(b)) \varepsilon =
M(tb) \varepsilon,$$ 
so $\varepsilon=M(tb)$, $s\varepsilon= M(\varepsilon)= tb$ and $b= ts\varepsilon= t \varepsilon s= M(b) s$; thus $M(b)=bs$, which contradicts our assumption. Therefore, $M(b)\varepsilon \neq t$; note also that $M(b)\varepsilon \neq e$ since $M(b)\neq bs$. Since $s\notin D_R(M(b)\varepsilon)$, we have $st\leq M(b)\varepsilon$ and $st\neq ts$. Moreover, since $t\leq a$, we have $s\not\leq a$, since $a\cdot b\cdot c$ is an $\mathcal S$-factorization, and we can conclude using Lemma~\ref{lemma2cap} applied to $ aM(b)\varepsilon$.

The claim is proved: so we have that $j\neq s$ satisfies (1) and (2) above. Since $ab\in W^J  \cdot W_{s} $, we have that there exists $\varepsilon' \in W_s$ such that $x=a \cdot b\varepsilon' \in W^J$ (we have $\ell(x)=\ell(a)+\ell(b\varepsilon')$  since $x^H=a$  and $x_H=b\varepsilon'$). But $\ell(xj)=\ell(ajb\varepsilon')= \ell(aj)+\ell(b\varepsilon')=\ell(a)-1+\ell(b\varepsilon')=\ell(x)-1$. In particular $j\in D_R(x)$, which is a contradiction since $x\in W^J$.
\end{proof}

\begin{pro}
\label{weakdefiniscematching}
Let $\mathcal S=(J,H,M)$ be a system for $w$ of the first kind. If $u\in W_\mathcal S$, then

\begin{itemize}
\item  $a_{\mathcal S}(M_\mathcal S(u))=a_{\mathcal S}(u)$;
\item  $b_{\mathcal S}(M_\mathcal S(u))=M(b_{\mathcal S}(u))$;
\item $c_{\mathcal S}(M_\mathcal S(u))=c_{\mathcal S}(u)$.
\end{itemize}
Furthermore, $M_\mathcal S(u)\in W_\mathcal S$.
\end{pro}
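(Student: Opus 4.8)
The plan is to prove the three identities on $a_{\mathcal S}, b_{\mathcal S}, c_{\mathcal S}$ first, and then deduce that $M_{\mathcal S}(u)\in W_{\mathcal S}$ as an easy consequence. Write $a=a_{\mathcal S}(u)$, $b=b_{\mathcal S}(u)$, $c=c_{\mathcal S}(u)$, and $x=M_{\mathcal S}(u)=a\, M(b)\, c$ (keeping in mind that, by Lemma~\ref{see}, $u=a\cdot b\cdot c$ is an $\mathcal S$-factorization, so in particular $\ell(u)=\ell(a)+\ell(b)+\ell(c)$). The first thing I would check is that $\ell(x)=\ell(a)+\ell(M(b))+\ell(c)$: since $M$ is a matching of the Hasse diagram of $[e,w_0(H)]$, $M(b)$ differs from $b$ in length by exactly one, and the additivity $\ell(x)=\ell(a)+\ell(M(b))+\ell(c)$ should follow from the structural constraints on $a$ and $c$ in the definition of an $\mathcal S$-factorization (here I would use that $a\in W_K\cap W^H$ and $c\in W_J\cap{}^HW$, together with the support restrictions, exactly as such length additivities are handled in \cite{CM}).

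The core of the argument is the computation $x^J=a\cdot M(b)_{\langle\text{the part outside }J\rangle}$. Concretely, by Lemma~\ref{capraia1}, $a\,M(b)\in W^J\cdot W_s$, so there is $\varepsilon\in W_s$ with $a\,M(b)\,\varepsilon\in W^J$ and $\ell(a\,M(b)\,\varepsilon)=\ell(a)+\ell(M(b)\,\varepsilon)$; since $c\in W_J$, this gives $x^J=a\,M(b)\,\varepsilon$ and $x_J=\varepsilon^{-1}\,M(b)^{-1}\cdots$—more usefully, $x^J=a\,M(b)\varepsilon$ lies in $W^H$ already on its $a$-part, and I would verify $(x^J)^H=a$ and $(x^J)_H=M(b)\varepsilon\in W_H$. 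This yields $a_{\mathcal S}(x)=(x^J)^H=a=a_{\mathcal S}(u)$. For the $c$-coordinate, note $x_J=(M(b)\varepsilon)^{-1}\cdot(\text{something})$; the cleanest route is to observe that $c$ is a suffix of $x$ with $c\in W_J\cap{}^HW$, so $c$ is a suffix of $x_J$, and then run the argument of Lemma~\ref{sseesolose}(2) (or Lemma~\ref{prefix}) in reverse to conclude ${}^H(x_J)=c$, hence $c_{\mathcal S}(x)=c=c_{\mathcal S}(u)$. Once $a_{\mathcal S}(x)=a$ and $c_{\mathcal S}(x)=c$ are known, the middle coordinate is forced: $b_{\mathcal S}(x)=(x^J)_H\cdot{}_H(x_J)$, and from $x=a\cdot M(b)\varepsilon\cdot(\varepsilon^{-1}\cdots c)$ with the length additivities in hand, a short bookkeeping gives $b_{\mathcal S}(x)=M(b)$; alternatively, $b_{\mathcal S}(x)$ is recovered from $x=a_{\mathcal S}(x)\cdot b_{\mathcal S}(x)\cdot c_{\mathcal S}(x)=a\cdot b_{\mathcal S}(x)\cdot c$ and $x=a\cdot M(b)\cdot c$ together with cancellation, which immediately yields $b_{\mathcal S}(x)=M(b)$.

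Finally, for $M_{\mathcal S}(u)\in W_{\mathcal S}$: by definition $W_{\mathcal S}=\{v\colon v_0(H)\le w_0(H),\ a_{\mathcal S}(v)\le a_{\mathcal S}(w)\}$. From the identities just proved, $a_{\mathcal S}(x)=a_{\mathcal S}(u)\le a_{\mathcal S}(w)$ since $u\in W_{\mathcal S}$. For the other condition, $x_0(H)$ is the longest element of $[e,x]\cap W_H$; since $a_{\mathcal S}(x)=a_{\mathcal S}(u)$ and $c_{\mathcal S}(x)=c_{\mathcal S}(u)$ and the $H$-part of $x$ is governed by $b_{\mathcal S}(x)=M(b)\in[e,w_0(H)]$, I would argue that $[e,x]\cap W_H$ and $[e,u]\cap W_H$ coincide (both are determined by the $b$-coordinate living in $[e,w_0(H)]$ together with the fixed outer parts), so $x_0(H)=u_0(H)\le w_0(H)$; hence $x\in W_{\mathcal S}$.

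The main obstacle I anticipate is the middle bullet, $b_{\mathcal S}(M_{\mathcal S}(u))=M(b_{\mathcal S}(u))$, because it hinges on controlling how $M(b)$ decomposes under the successive parabolic factorizations $({}\cdot{})^J$, $({}\cdot{})^H$, ${}_H({}\cdot{})$, and this is exactly where the element $\varepsilon\in W_s$ produced by Lemma~\ref{capraia1} must be shown to be ``absorbed correctly''—i.e. that passing from $a\,M(b)$ to its $W^J$-part does not disturb the $a$-coordinate and only redistributes a letter of $W_s\subseteq W_H$ into the middle factor. Handling the degenerate cases cleanly (when $M(b)=bs$, when $|H|=1$, and when $\varepsilon=e$ versus $\varepsilon=s$) will require care but should be routine given Lemma~\ref{capraia1} and the $\mathcal S$-factorization axioms; the bulk of the real work is the length-additivity verifications, which I would carry out using the Subword Property and Proposition~\ref{fattorizzo} in the style of \cite[\S4]{CM}.
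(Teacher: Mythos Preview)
Your approach to the three bullets is essentially the paper's: you invoke Lemma~\ref{capraia1} to produce $\varepsilon\in W_s$ with $aM(b)\varepsilon\in W^J$, and read off the coordinates. The paper does this more directly than you do: once $x^J=aM(b)\varepsilon$ is known, the companion piece is simply $x_J=\varepsilon c$ (since $\varepsilon\in W_s\subseteq W_J$, $c\in W_J$, and the parabolic factorization is unique), and then, because $\varepsilon\in W_H$ and $c\in{}^HW$, one immediately gets ${}_H(x_J)=\varepsilon$ and ${}^H(x_J)=c$; this makes your suffix detour via Lemma~\ref{sseesolose} or Lemma~\ref{prefix} unnecessary, and your separate length-additivity check superfluous (it falls out of uniqueness in Proposition~\ref{fattorizzo}). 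The middle bullet then follows from $b_{\mathcal S}(x)=(x^J)_H\cdot{}_H(x_J)=M(b)\varepsilon\cdot\varepsilon=M(b)$, with no cancellation argument needed.

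There is, however, a genuine gap in your last paragraph. The claim that $[e,x]\cap W_H=[e,u]\cap W_H$, hence $x_0(H)=u_0(H)$, is false. Already when $a=c=e$ one has $u=b$ and $x=M(b)$, so $u_0(H)=b\neq M(b)=x_0(H)$. More generally, the $H$-content of $x$ genuinely changes when $b$ is replaced by $M(b)$. The correct argument (the paper's) is to observe that $u_0(H)=\varepsilon\cdot b\cdot\varepsilon'$ for suitable $\varepsilon,\varepsilon'\in H\cup\{e\}$ coming from the (at most one) $H$-letter in $a$ and in $c$, and that $x_0(H)$ lies in $\{\varepsilon M(b)\varepsilon',\,M(b)\varepsilon',\,\varepsilon M(b),\,M(b)\}$; one then checks that $[e,w_0(H)]$ is stable under $M$, $\lambda_\varepsilon$, and $\rho_{\varepsilon'}$ to conclude $x_0(H)\leq w_0(H)$. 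Your proposal does not supply this step, and the equality you assert cannot substitute for it.
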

\begin{proof}
For short,  we let $a=a_\mathcal S (u)$, $b=b_\mathcal S (u)$, and $c=c_\mathcal S (u)$. 

Let $\varepsilon\in W_{s}$ be such that $a M ( b )\varepsilon  \in W^J$ (see Lemma~\ref{capraia1}). Thus $M_\mathcal S(u)^J=a M(b) \varepsilon$ and $M_\mathcal S(u)_J=\varepsilon c$.  From this parabolic decomposition, it follows that $(M_\mathcal S(u)^J)^{{H}}= a$, $(M_\mathcal S(u)^J)_{{H}}=M ( b )\varepsilon$, $ _{H} (M_\mathcal S(u)_{J})=\varepsilon$ and  $^{H}(M_\mathcal S(u)_{J}) = c$, and the three assertions are proved. 

To show that $M_\mathcal S(u)\in W_\mathcal S$ we only have to verify that $M_{\mathcal S}(u)_0(H)\leq w_0(H)$. This follows from the observation that $u_0(H)=\varepsilon\cdot b_\mathcal S(u)\cdot \varepsilon '$ with $\varepsilon,\varepsilon '\in H\cup \{e\}$.  Indeed we have $M_{\mathcal S}(u)_0(H)\in \{\varepsilon\cdot M(b_\mathcal S(u))\cdot \varepsilon ',M(b_\mathcal S(u))\cdot \varepsilon ',\varepsilon\cdot M(b_\mathcal S(u)), M(b_\mathcal S(u))\}$, so the result follows since in this situation the Bruhat interval $[e,w_0(H)]$ is closed under $M, \lambda_{\varepsilon}$, and $\rho_{\varepsilon'}$.
\end{proof}

\begin{cor}\label{weakdefiniscematching2}
Let $\mathcal S=(J,H,M)$ be a system for $w$ of the first kind. Then $M_\mathcal S$ defines a matching on $W_\mathcal S$. Moreover, for all $u\in W_{\mathcal S}$, we have $u\lhd M_\mathcal S(u)$ if and only if $b_{\mathcal S}(u)\lhd M(b_{\mathcal S}(u))$.
\end{cor}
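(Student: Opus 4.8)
The statement to prove is that, for a system $\mathcal S=(J,H,M)$ of the first kind, the map $M_\mathcal S$ is a matching of the poset $W_\mathcal S$, and that for $u\in W_\mathcal S$ one has $u\lhd M_\mathcal S(u)$ exactly when $b_\mathcal S(u)\lhd M(b_\mathcal S(u))$. The plan is to deduce everything directly from Proposition~\ref{weakdefiniscematching}, which already records that $a_\mathcal S$ and $c_\mathcal S$ are unchanged by $M_\mathcal S$, that $b_\mathcal S$ is transformed by $M$, and that $M_\mathcal S$ preserves $W_\mathcal S$.

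First I would check that $M_\mathcal S$ is an involution. For $u\in W_\mathcal S$ we have by Lemma~\ref{see} the $\mathcal S$-factorization $u = a_\mathcal S(u)\cdot b_\mathcal S(u)\cdot c_\mathcal S(u)$, and by Proposition~\ref{weakdefiniscematching} the triple $\big(a_\mathcal S(M_\mathcal S(u)),\,b_\mathcal S(M_\mathcal S(u)),\,c_\mathcal S(M_\mathcal S(u))\big) = \big(a_\mathcal S(u),\,M(b_\mathcal S(u)),\,c_\mathcal S(u)\big)$; since $M_\mathcal S(u)\in W_\mathcal S$ this is again the canonical $\mathcal S$-factorization of $M_\mathcal S(u)$, so applying $M_\mathcal S$ once more gives $M_\mathcal S(M_\mathcal S(u)) = a_\mathcal S(u)\cdot M(M(b_\mathcal S(u)))\cdot c_\mathcal S(u) = a_\mathcal S(u)\cdot b_\mathcal S(u)\cdot c_\mathcal S(u) = u$, using that $M$ is an involution. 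Hence $M_\mathcal S\circ M_\mathcal S = \mathrm{id}$ on $W_\mathcal S$. In particular, since $b_\mathcal S(u)\neq M(b_\mathcal S(u))$ always (as $M$ is a matching), the factorizations of $u$ and $M_\mathcal S(u)$ differ in their $b$-part, so $M_\mathcal S(u)\neq u$.

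Next I would show $\{u,M_\mathcal S(u)\}$ is an edge of the Hasse diagram of $W_\mathcal S$, i.e. $u$ and $M_\mathcal S(u)$ differ in rank by one and are comparable. Because lengths are additive in the $\mathcal S$-factorization (first bullet of the definition, or Proposition~\ref{fattorizzo} via Lemma~\ref{see}), we get $\ell(M_\mathcal S(u)) = \ell(a_\mathcal S(u)) + \ell(M(b_\mathcal S(u))) + \ell(c_\mathcal S(u))$ and $\ell(u) = \ell(a_\mathcal S(u)) + \ell(b_\mathcal S(u)) + \ell(c_\mathcal S(u))$, so $|\ell(M_\mathcal S(u)) - \ell(u)| = |\ell(M(b_\mathcal S(u))) - \ell(b_\mathcal S(u))| = 1$ since $\{b_\mathcal S(u),M(b_\mathcal S(u))\}$ is an edge in the Hasse diagram of $[e,w_0(H)]$. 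Comparability then follows from the Subword Property (Theorem~\ref{subword}): concatenating reduced expressions for $a_\mathcal S(u)$, for the shorter of $b_\mathcal S(u),M(b_\mathcal S(u))$, and for $c_\mathcal S(u)$ yields a reduced subword of the analogous reduced expression built from the longer of the two, because $b_\mathcal S(u)\lhd M(b_\mathcal S(u))$ or $M(b_\mathcal S(u))\lhd b_\mathcal S(u)$ inside $W_H$ gives such a subword relation at the middle factor while the outer factors are copied verbatim. Thus $u$ and $M_\mathcal S(u)$ are comparable and adjacent in rank, so $\{u,M_\mathcal S(u)\}$ is a Hasse edge; combined with the involution property, $M_\mathcal S$ is a matching of $W_\mathcal S$. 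The last clause is immediate from the rank computation above: $u\lhd M_\mathcal S(u)$ iff $\ell(M_\mathcal S(u)) = \ell(u)+1$ iff $\ell(M(b_\mathcal S(u))) = \ell(b_\mathcal S(u))+1$ iff $b_\mathcal S(u)\lhd M(b_\mathcal S(u))$.

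The main obstacle is the verification that $u$ and $M_\mathcal S(u)$ are actually comparable in Bruhat order (not merely of adjacent rank): one must be a little careful that the concatenation $a_\mathcal S(u)\cdot(\text{smaller of }b,M(b))\cdot c_\mathcal S(u)$ really is a subword, as a word, of $a_\mathcal S(u)\cdot(\text{larger})\cdot c_\mathcal S(u)$, which works because the edge in the Hasse diagram of $[e,w_0(H)]\subseteq W_H$ corresponds at the word level to deleting or inserting a single letter in an appropriately chosen reduced word for the larger of $b_\mathcal S(u),M(b_\mathcal S(u))$, and lengths stay additive across the three blocks. Everything else is a formal consequence of Proposition~\ref{weakdefiniscematching}.
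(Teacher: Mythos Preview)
Your proof is correct and follows exactly the approach intended by the paper, which simply states that the corollary follows from Proposition~\ref{weakdefiniscematching}; you have just spelled out in detail why that proposition yields the involution property, the length-difference-one property, Bruhat comparability via the Subword Property, and the stated equivalence. The only point worth making explicit (though you implicitly use it) is that length additivity in the factorization $M_\mathcal S(u)=a_\mathcal S(u)\cdot M(b_\mathcal S(u))\cdot c_\mathcal S(u)$ follows because Proposition~\ref{weakdefiniscematching} identifies this triple with $(a_\mathcal S,b_\mathcal S,c_\mathcal S)$ of $M_\mathcal S(u)$, and these are always defined via iterated parabolic decompositions and hence satisfy length additivity.
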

\begin{proof}
This follows  by Proposition~\ref{weakdefiniscematching}.
\end{proof}

Some comments for systems of the second kind are in order. The situation here is right-to-left symmetrical with respect to systems of the first kind. If $\mathcal S=(J,H,M)$ is a system for $w$ of the second kind, one can define the decomposition
\begin{align*}
 a'_{\mathcal S}(u)&=(_Ku)^H\\
 b'_{\mathcal S}(u)&= (_Ku)^H\cdot ^H(^Ku)\\
 c'_{\mathcal S}(u)&=^H(^Ku)
\end{align*}
for all $u\in W$.   We let 
\[
 W'_{\mathcal S}=\{u\in W\colon\,  u_0(H)\leq w_0(H),\, c'_{\mathcal S}(u)\leq c'_{\mathcal S}(w)\}.
\]
Left versions of Lemmas~\ref{sseesolose}, \ref{see}, and~\ref{capraia1},  Proposition~\ref{weakdefiniscematching}, and  Corollary~\ref{weakdefiniscematching2} hold. In particular, for all $u\in W'_{\mathcal S}$, the factorization $u=a'_{\mathcal S}(u) \cdot b'_{\mathcal S}(u) \cdot c'_{\mathcal S}(u)$ is an $\mathcal S$-factorization. Moreover,  the map
\[
 M_\mathcal S(u)=a'_{\mathcal S}(u) \cdot M(b'_{\mathcal S}(u)) \cdot c'_{\mathcal S}(u)
\]
defines a matching on $W'_{\mathcal S}$.

The problem of consistency of the two definitions of $M_{\mathcal S}$ if $\mathcal S$ is both of the first and second kind disappears after the following fundamental result.

\begin{thm}
\label{fa22}
Let $\mathcal S=(J,H,M)$ be a  system for $w$, and $u\leq w$ (or, more generally, $u\in W_\mathcal S$ if $\mathcal S$ is of the first kind, $u\in W'_\mathcal S$ if $\mathcal S$ is of the second kind). 
 If $u=a\cdot b \cdot c$ is any $\mathcal S$-factorization for $u$,  
 then  
 \[
  M_\mathcal S (u)=a \cdot M(b) \cdot c,
 \]
 (so $a \cdot M(b) \cdot c$ does not depend on the chosen factorization), 
 and $u \lhd M_\mathcal S(u)$ if and only if $b \lhd M(b)$.

 Consequently,  \(
  M_\mathcal S (u)=a \cdot M(b) \cdot c
 \) is an  $\mathcal S$-factorization for  \(
  M_\mathcal S (u) \).
 \end{thm}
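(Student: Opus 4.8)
The plan is to reduce everything to the case where $\mathcal S$ is of the first kind (the second-kind case being right-to-left symmetric, and the overlap case $|H|=1$ being a special case of both), and then to show that an arbitrary $\mathcal S$-factorization $u=a\cdot b\cdot c$ necessarily coincides with the canonical one $u=a_\mathcal S(u)\cdot b_\mathcal S(u)\cdot c_\mathcal S(u)$ up to the expected ambiguity. First I would record the structural consequences of the hypotheses on an $\mathcal S$-factorization: since $a\in W_K\cap W^H$ and $b\in W_H$, the product $a\cdot b$ lies in $W^H$ with $(ab)^H=a$, $(ab)_H=b$; since moreover $a\in W_K$ and, by Lemma~\ref{ciao} together with Property~[S1], $c\in W_J$ with $abc=u$ and lengths additive, one gets a handle on how the parabolic factorizations $u=u^J\cdot u_J$ and $u={}_K u\cdot{}^K u$ interact with $a,b,c$.

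The key step is to prove $a=a_\mathcal S(u)$, $c=c_\mathcal S(u)$, and $b=b_\mathcal S(u)$. For $a$: since $a\in W_K\subseteq W$, $bc\in W_H\cdot W_J\subseteq W_J$ (using $H\subseteq J$ would be wrong here — rather $b\in W_H\subseteq W_J$ since $s,t$... no: in a first-kind system $H\subseteq K$, so I must instead argue $b\in W_H$ and $c\in W_J\cap{}^HW$, hence $bc$ need not lie in $W_J$). The correct route: $a\in W^H$ and $|\mathrm{Supp}_H(a)|\le 1$ forces $a$ to be essentially in $W_{K\setminus H}$ times at most one reflection-free generator of $H$; combined with $b\in W_H$ one shows $(ab)^H=a$ and hence, comparing with $u^J\cdot{}_s(u_J)$ as in Lemma~\ref{capraia1}, that $a=u^J$ modulo $W_s$, and then $a=(u^J)^H=a_\mathcal S(u)$ because $a\in W^H$. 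Symmetrically $c\in{}^HW$ and $|\mathrm{Supp}_H(c)|\le 1$ pin down $c={}^H(u_J)=c_\mathcal S(u)$, using Lemma~\ref{sseesolose}(2) to control the generator $s$ that $H\cap J=\{s\}$ contributes. Once $a=a_\mathcal S(u)$ and $c=c_\mathcal S(u)$ are known, $b$ is forced by $b=a^{-1}uc^{-1}$ with additive lengths, so $b=b_\mathcal S(u)$.

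With the canonicity of the factorization in hand, $M_\mathcal S(u)=a\cdot M(b)\cdot c$ holds by the very definition of $M_\mathcal S$ on $W_\mathcal S$ (first kind) or $W'_\mathcal S$ (second kind), and the independence of the factorization is then automatic. The covering statement $u\lhd M_\mathcal S(u)\iff b\lhd M(b)$ is exactly Corollary~\ref{weakdefiniscematching2} (and its left version), transported through the identification $b=b_\mathcal S(u)$. Finally, that $M_\mathcal S(u)=a\cdot M(b)\cdot c$ is itself an $\mathcal S$-factorization follows from Proposition~\ref{weakdefiniscematching}: it gives $a_\mathcal S(M_\mathcal S(u))=a$, $b_\mathcal S(M_\mathcal S(u))=M(b)$, $c_\mathcal S(M_\mathcal S(u))=c$, and since $M(b)\in W_H$ and $a,c$ already satisfy the support/commutation conditions required of an $\mathcal S$-factorization, the triple $(a,M(b),c)$ qualifies (the conditions on $a$ and $c$ do not involve $b$).

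The main obstacle I anticipate is the uniqueness argument for $a$ and $c$ when $|H|=2$: the at-most-one-support-in-$H$ conditions, together with the commutation-with-$\lambda_\alpha$ / $\rho_\beta$ hypotheses, are exactly what is needed to rule out alternative placements of the generators $s$ and $t$ between the three factors, and getting this bookkeeping right — especially disentangling which of $s,t$ can appear in $a$ versus absorbed into $b$ — will require careful use of Lemmas~\ref{lemma0cap}, \ref{lemma00cap}, and \ref{lemma2cap}, much as in the proof of Lemma~\ref{capraia1}. I would handle it by first treating the generic sub-case where neither $s$ nor $t$ lies in $\mathrm{Supp}_H(a)\cup\mathrm{Supp}_H(c)$, then the boundary sub-cases one at a time, each time reducing to the already-proved Lemma~\ref{capraia1} or to a direct length computation.
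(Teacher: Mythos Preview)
Your central claim---that every $\mathcal S$-factorization $u=a\cdot b\cdot c$ must coincide with the canonical one $(a_{\mathcal S}(u),b_{\mathcal S}(u),c_{\mathcal S}(u))$---is false, and the whole strategy collapses with it. Here is a minimal counterexample. Take $S=\{s,r\}$ with $m_{s,r}=2$, so $C_s=S$; let $H=\{s\}$, $J=S$ (forced by $C_s\subseteq J$), hence $K=S$; let $M$ be the unique matching of $[e,s]$ and $w=rs$. Then $\mathcal S=(J,H,M)$ is a first-kind system for $w$. The element $u=rs$ admits the two $\mathcal S$-factorizations
\[
u=r\cdot s\cdot e \qquad\text{and}\qquad u=e\cdot s\cdot r,
\]
and only the second is canonical (indeed $u^J=e$, so $a_{\mathcal S}(u)=e$). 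More generally, any generator $r\in J\cap K=C_s$ that commutes with all of $H$ can migrate between $a$ and $c$, so there is no ``up to expected ambiguity'' rescue: the ambiguity is exactly the content of the theorem, not a trivial $\varepsilon\in W_s$ shift.

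The paper therefore does \emph{not} argue by uniqueness. It fixes an arbitrary $\mathcal S$-factorization $a\cdot b\cdot c$ and proves $M_{\mathcal S}(u)=aM(b)c$ by induction on $\ell(a)$: if $ab\varepsilon\in W^J$ (with $\varepsilon\in\{e,s\}$ chosen so that $s\notin D_R(ab\varepsilon)$) then the factorization \emph{is} canonical and one is done; otherwise there is a right descent $r\in J\cap K=C_s$ of $ab\varepsilon$, one shows $r\in D_R(a)$, and then rewrites $abc$ as a new $\mathcal S$-factorization with a strictly shorter first factor (e.g.\ $(ar)\cdot b\cdot(rc)$ when $r$ commutes with $t$, with more delicate rewritings using the $\lambda_t$-commutation hypothesis when it does not). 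Lemmas~\ref{lemma0cap}, \ref{lemma00cap}, \ref{lemma2cap} are used here to control these descents, not to pin down $a$ and $c$ uniquely. Your final two paragraphs (on the covering relation and on $(a,M(b),c)$ being an $\mathcal S$-factorization) are fine once the main identity is established, but the route you propose to that identity does not work.
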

\begin{proof}
The main idea of this proof is already present in the proof of \cite[Theorem 5.7]{CM}, and we show here that it is still valid in this more general context with  weaker hypotheses.
We assume that $\mathcal S$ is a system of the first kind, the argument for a system of the second kind being entirely similar.
Let 
\[
 \varepsilon=\begin{cases}
              e& \textrm{if }bs>b;\\ s&\textrm{if } bs<b.
             \end{cases}
\]
We prove the two statements $
  M_\mathcal S (u)=a \cdot M(b) \cdot c
 $
 and $u \lhd M_\mathcal S(u)$ if and only if $b \lhd M(b)$ by induction on $\ell(a)$. We first observe that if $a=a_\mathcal S(u)$, $b=b_\mathcal S(u)$ and $c=c_\mathcal S(u)$, then the results follow by the definition of $M_{\mathcal S}$ and Proposition \ref{weakdefiniscematching}. 
 If $\ell(a)=0$, then $u^J=b\varepsilon$ and $u_J=\varepsilon c$, and therefore $a_{\mathcal S}(u)=e=a$,  $b_{\mathcal S}(u)=b$ and $c_\mathcal S(u)=c$, and we are done.

Assume $\ell(a)\geq 1$. If $ab\varepsilon\in W^J$, that is, $ab\varepsilon =u^J$, then $a_{\mathcal S}(u)=a$, $b_{\mathcal S}(u)=b$ and $c_{\mathcal S}(u)=c$, and the results again follow.

If $ab\varepsilon\notin W^J$, there exists $r\in J\cap K= C_s$ such that $r \in {D}_R(ab \varepsilon)$, and we first claim that $r\in {D}_R(a)$. If $b\varepsilon=e$ this is trivial, otherwise we have  $|H|=2$, $H=\{s,t\}$  and $t\in{D}_R(ab\varepsilon)$. 
Note that $s\notin {D}_R(ab \varepsilon)$ by Proposition \ref{fattorizzo}.
Hence $r\neq s$. By Lemma~\ref{lemma0cap}, we have $r\in {D}_R(a)$. 

Now, if $r$ commutes with both $s$ and $t$, then $a b c=(ar) b (rc)$; this triplet is an $\mathcal S$-factorization and the result clearly follows by induction. 

Therefore, we can assume that $r$ does not commute with $t$. If $b\varepsilon=e$, that is, $b\in\{e,s\}$, then $a b c=(a{r})(b)({r}c)$; we observe that $s\notin {D}_R(a{r})$ since otherwise $s\in {D}_R(a)$, and similarly $s\notin {D}_L({r}c)$. If $t \notin  {D}_R(a {r})$, this triplet is an $\mathcal S$-factorization and the result follows by induction. If $t \in  {D}_R(a {r})$ then $s\not \leq a$ and $M$ commutes with $\lambda_t$ by definition of an $\mathcal S$-factorization,  and again  the   result follows by induction by considering the $\mathcal S$-factorization $(a{r}t)(tb)({r}c)$ since
$$(a{r}t)M(tb)({r}c)= (a{r}t) tM(b)({r}c)= (a{r}) M(b)({r}c)= a M(b)c. $$
The second statement follows since $M(b)\neq tb$ in this case and so $M(b)\lhd b$ if and only if $M(tb)\lhd tb$.

We are therefore reduced to the case $b\varepsilon \neq e$, so $t$ is a right descent of $b\varepsilon$; hence both $t$ and ${r}$ are right descents  of $a b \varepsilon$. In particular, we have a reduced expression for $a b \varepsilon$ which ends with $t\text{-} {r} \text{-}t$ and so $t{r}\leq ab\varepsilon$. This forces $t\leq a$,  so, by definition of an $\mathcal S$-factorization,  
 $M$ commutes with $\lambda_t$ and $s\not \leq a$: in particular, $M(t)=ts$ otherwise $M \circ \lambda_t(e)$ would not be equal to $ \lambda_t \circ M (e)$. 

Now, if $b\varepsilon=t$ we let $m=t{r}t\cdots$ ($m_{t,{r}}$ factors): so $ab\varepsilon=x\cdot m$ with $\ell(ab\varepsilon)=\ell(x)+\ell(m)$ and therefore, since $b\varepsilon=t$, we have $a=x\cdot mt$ with $\ell(a)=\ell(x)+\ell(m)-1$. Moreover, since $r$ and $t$ are both right descents of $xm=ab\varepsilon$, we have that $\ell(xmrt)=\ell(xm)-2=\ell(a)-1$. Thus 
\[
 u=xm \varepsilon c=xmrt\, tr\varepsilon c=(xmrt)(t\varepsilon) (rc)=(xmrt) b (rc).
\]
This factorization is an $\mathcal S$-factorization of $u$ (recall that $s\not \leq a$, so $s\not \leq xmrt$) and hence we can conclude by induction (as we have already observed,  $\ell(xm{r}t)=\ell(a)-1$) that
\begin{align*}
 M_{\mathcal S}(u)&=xm{r}t M(b) {r} c=xm{r}t \, t\varepsilon s\, {r} c\\& =xm \varepsilon s c=a b s c\\&=a M(b) c,
\end{align*}
as, clearly, since $b\varepsilon=t$ we have either $b=ts$ or $b=t$ and in both cases $M(b)=bs$.

We are left with the case $st\leq b\varepsilon$. We observe that $ab \varepsilon$ has a reduced expression which ends with $t\textrm{-}{r}\textrm{-}t$ and a reduced expression which ends with $s\textrm{-}t$; therefore $ab\varepsilon t$ has a reduced expression which ends with $t\textrm{-}r$ and a reduced expression which ends with $s$. In order to transform one of these two reduced expressions to the other using braid moves, we necessarily have to perform a braid relation between $s$ and $t$. Therefore, $tst\cdots$ ($m_{s,t}$ factors) is smaller or equal than $ ab\varepsilon t$.  As we already know  $s\not \leq a$, we deduce $b\varepsilon t \geq sts\cdots$ ($m_{s,t}-1$ factors). But $b\varepsilon t \in W_{H}$ and $t\notin D_R(b\varepsilon t)$ by construction, so $b\varepsilon t=sts\cdots$ ($m_{s,t}-1$ factors). Therefore, $b\varepsilon=sts\cdots$ ($m_{s,t}$ factors). This is a contradiction since $s\notin {D}_R (b\varepsilon)$.

In order to show that \(
  M_\mathcal S (u)=a \cdot M(b) \cdot c
 \) is an  $\mathcal S$-factorization for  \(
  M_\mathcal S (u) \), observe that all axioms of the definition of an $\mathcal S$-factorization follows immediately by the fact that $a\cdot b\cdot c$ is an $\mathcal S$-factorization, except the first one,   $ \ell(M_\mathcal S (u)) =\ell(a) + \ell(M(b)) + \ell(c) $, which  follows  by the fact that    $$u \lhd M_\mathcal S(u) \iff b \lhd M(b).$$
  The proof is complete.
\end{proof}

\section{Main results}
This section contains the main results of this work. In particular, given a system  $\mathcal S$ for $w$, we present a simple way to compute $M_{\mathcal S}(u)$ for every $u\in[e, w]$, and use this to show that $M_{\mathcal S}$ is a special matching. We finally observe, using known results, that all special matchings are of this form and show the desired classification of special matchings.

The next result shows that the action of a  matching $M_\mathcal S$ on an element $u$ can sometimes be calculated also if we do not have an $\mathcal S$-factorization of $u$. The first part is used in this section to prove that $M_{\mathcal S}$ is a special matching, while the second part is needed only in  Section \ref{finalsection}.
\begin{thm}\label{montecristo}Let $\mathcal S=(J,H,M)$ be a system for $w$ and $w=a\cdot b\cdot c$ be an $\mathcal S$-factorization of $w$. Let $u\leq w$ and $u=a' b'c'$ with $a'\leq a$, $b'\leq b$ and $c'\leq c$. Assume that 
\[
 u=a'\cdot b'\cdot c'\,\,\,\textrm{(that is, $\ell(u)=\ell(a)+\ell(b)+\ell(c)$}),
\]
or $M_\mathcal S$ is a special matching of $w$ and there exists $r\in D_L(a)$ with 
\[
 ru=(ra')\cdot b'\cdot c' \,\,\,\textrm{(that is, $\ell(ru)=\ell(ra')+\ell(b')+\ell(c')$}).
\]
Then
\[
 M_\mathcal S(u)=a'M(b')c'.
\]
\end{thm}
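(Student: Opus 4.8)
The plan is to prove Theorem~\ref{montecristo} by reducing the general situation to the case already understood, namely when the given factorization of $u$ is the canonical one $u=a_\mathcal S(u)\cdot b_\mathcal S(u)\cdot c_\mathcal S(u)$ (treating $\mathcal S$ as of the first kind; the second-kind case is symmetric). I would induct on $\ell(a')$, exactly as in the proof of Theorem~\ref{fa22}. In the first hypothesis, $u=a'\cdot b'\cdot c'$ is an $\mathcal S$-factorization because $a'\leq a\in W_K\cap W^H$ with $|\mathrm{Supp}_H(a')|\leq 1$ (inherited from $a$ via the Subword Property), $b'\leq b\in W_H$, and $c'\leq c\in W_J\cap{}^HW$ with the analogous support condition; moreover the commutation hypotheses on $M$ pass down automatically since they depend only on which generator of $H$ appears, and $\mathrm{Supp}_H(a')\subseteq\mathrm{Supp}_H(a)$, $\mathrm{Supp}_H(c')\subseteq\mathrm{Supp}_H(c)$. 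Hence the first case of the theorem is an immediate consequence of Theorem~\ref{fa22} applied to the $\mathcal S$-factorization $u=a'\cdot b'\cdot c'$.

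The substantive content is the second case, where we only have $ru=(ra')\cdot b'\cdot c'$ for some $r\in D_L(a)$, so $u=a'\cdot b'\cdot c'$ may fail to be length-additive. Here I would use that $M_\mathcal S$ is already known to be a special matching of $w$ together with the Lifting Property (Lemma~\ref{lpfsm}). Since $r\in D_L(a)\subseteq D_L(w)$, left-multiplication $\lambda_r$ is a special matching of $w$; write $v=ru=\lambda_r(u)$, which is an element with $v\leq w$ admitting the genuine $\mathcal S$-factorization $v=(ra')\cdot b'\cdot c'$ (length-additive by hypothesis, and an $\mathcal S$-factorization since $ra'\leq a$ by the Subword Property — $r$ is a left descent of $a$ — and $ra'$ still lies in $W_K\cap W^H$ with the same $H$-support as $a'$ up to the one allowed generator, which in any case does not lie in $H$ because $r\in D_L(a)$ would otherwise contradict $a\in W^H$... more precisely $ra'\le a$ so $\mathrm{Supp}_H(ra')\subseteq\mathrm{Supp}_H(a)$). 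By the first case of the theorem, or directly by Theorem~\ref{fa22}, we get $M_\mathcal S(v)=(ra')M(b')c'=r\cdot a'M(b')c'$.

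It remains to recover $M_\mathcal S(u)$ from $M_\mathcal S(v)=M_\mathcal S(ru)$. The key point is that $\lambda_r$ and $M_\mathcal S$ "commute" in the appropriate sense: I claim $\lambda_r(M_\mathcal S(u))=M_\mathcal S(\lambda_r(u))$ whenever $r\in D_L(a)$. This can be seen from the product description: $M_\mathcal S(u)=a'M(b')c'$ has $r$ as a left descent because $r\in D_L(a')$ (again since $r\in D_L(a)$ and $a'$ is obtained from $a$ in a length-additive way, using that $r$ commutes with nothing dangerous — here one uses $r\notin H$, which holds since $a\in W^H$ and $r\in D_L(a)$ forces $r\in D_L(\text{something in }W^H)$, hence $r\notin H$ as elements of $W^H$ have no... ). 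So $\lambda_r(a'M(b')c')=(ra')M(b')c'=M_\mathcal S(ru)$, giving $M_\mathcal S(u)=r\cdot M_\mathcal S(ru)=a'M(b')c'$, which is the desired conclusion. The cleanest way to justify the commutation $\lambda_r M_\mathcal S=M_\mathcal S\lambda_r$ on the relevant elements is: both $\lambda_r$ and $M_\mathcal S$ are special matchings of $w$, $r\in D_L(a)\subseteq D_L(u)\cap D_L(M_\mathcal S(u))$ (the latter since the $a$-part is unchanged by $M_\mathcal S$ and contributes $r$ as a left descent), so $\lambda_r(u),\lambda_r(M_\mathcal S(u))\leq w$ and one checks directly $M_\mathcal S(\lambda_r(u))=\lambda_r(M_\mathcal S(u))$ from the parabolic/$\mathcal S$-factorization formulas, since removing a left letter of $a$ does not interact with the $b$-coordinate where $M$ acts.

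**Expected main obstacle.** The delicate step is verifying that $\lambda_r$ and $M_\mathcal S$ genuinely commute on the elements in play — equivalently, that $r\in D_L(a)$ implies $r\in D_L(a')$ (so that $ra'$ is length-additive over $a'\cdot M(b')\cdot c'$ as well, not just over $a'\cdot b'\cdot c'$), and that $r$ plays no role in the $b$-coordinate. One must be careful because $a'$ is an arbitrary Bruhat-lower element of $a$, not a suffix, so "$r\in D_L(a)\Rightarrow r\in D_L(a')$" is genuinely false without the additivity hypothesis $\ell(ru)=\ell(ra')+\ell(b')+\ell(c')$; it is precisely this hypothesis, combined with $u=a'b'c'$ having its length drop exactly at the juncture between the $a'$ and $b'$ pieces (so that $ra'\cdot b'\cdot c'$ is additive), that we must exploit. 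I expect the bookkeeping to mirror closely the induction in Theorem~\ref{fa22}, so I would actually fold the second case into that same induction on $\ell(a')$ rather than invoking Theorem~\ref{fa22} as a black box, handling in parallel the possibility that the length-drop in $a'b'c'$ occurs between $a'$ and $b'$.
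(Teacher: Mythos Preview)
Your handling of the first case contains a concrete error: from $a'\leq a\in W_K\cap W^H$ you conclude $a'\in W_K\cap W^H$, but Bruhat-smaller elements of $W^H$ need not lie in $W^H$. If $H=\{s,t\}$ and $t\leq a$, a subword $a'$ of $a$ may well have $t\in D_R(a')$, so $a'\notin W^H$; the same issue arises for $c'$ and ${}^HW$. Thus $u=a'\cdot b'\cdot c'$ is in general \emph{not} an $\mathcal S$-factorization, and you cannot invoke Theorem~\ref{fa22} directly. The paper repairs this by passing to $u=(a'\eta)\cdot(\eta b'\varepsilon)\cdot(\varepsilon c')$, where $\eta\in H\cup\{e\}$ removes the possible $H$-right-descent of $a'$ and $\varepsilon$ does the same on the left of $c'$; \emph{this} is an $\mathcal S$-factorization, and the commutation of $M$ with $\lambda_\eta$ and $\rho_\varepsilon$ (guaranteed because $\eta\in\mathrm{Supp}_H(a)$ and $\varepsilon\in\mathrm{Supp}_H(c)$) then gives $a'\eta\cdot M(\eta b'\varepsilon)\cdot\varepsilon c'=a'M(b')c'$.

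For the second case your sketch is circular: you assert that ``$M_\mathcal S(u)=a'M(b')c'$ has $r$ as a left descent,'' but $M_\mathcal S(u)=a'M(b')c'$ is precisely the statement to be proved. The subsidiary claim $r\in D_L(a')$ is, as you note yourself, not a consequence of $r\in D_L(a)$ and $a'\leq a$, and the hypothesis $\ell(ru)=\ell(ra')+\ell(b')+\ell(c')$ does not supply it either. The paper takes a different route: once $M_\mathcal S(ru)=(ra')M(b')c'$ is known from the first part, one supposes the conclusion fails, so that $M_\mathcal S(ru)\neq rM_\mathcal S(u)$, and chooses a \emph{minimal} $x\leq w$ with $M_\mathcal S(x)\lhd x$, $rx\lhd x$, $M_\mathcal S(rx)\neq rM_\mathcal S(x)$. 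If $x$ has only the two coatoms $M_\mathcal S(x)$ and $rx$ then $x\in W_{\{r,s\}}$, where the commutation is immediate; otherwise a third coatom $y\lhd x$ together with the special-matching axiom and minimality of $x$ produces a covering relation violating the definition of special matching. This argument uses the hypothesis that $M_\mathcal S$ is special in an essential, abstract way and does not reduce to the $\ell(a')$-induction of Theorem~\ref{fa22}.
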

\begin{proof}
We first assume that $u=a'\cdot b'\cdot c'$.
 We recall that by definition of $\mathcal S$-factorization we have $|\textrm Supp_H(a)|\leq 1$ and  $|\textrm Supp_H(c)|\leq 1$. We let

 \[
 \eta=\begin{cases}
    e& \textrm{if }H\cap D_R(a')=\emptyset;\\ \alpha &\textrm{if }H\cap D_R(a')=\{\alpha\}.
   \end{cases}                                                                                                                                                                                                                                                                                                                                                                                                                                                                                                                                                                                                               \]
and we let 
\[
 \varepsilon=\begin{cases}
              e& \textrm{if }H\cap D_L(c')=\emptyset;\\ \beta &  \textrm{if } H\cap D_L(c')=\{\beta\}.
             \end{cases}
\]
Now we observe that the factorization
\[
 u=(a' \eta) \cdot (\eta b'\varepsilon) \cdot (\varepsilon c')
\]
is an $\mathcal S$-factorization and that $M$ commutes with the multiplication on the left by $\eta$ (since $\eta \leq a' \leq a$) and the multiplication on the right by $\varepsilon$ (since $\varepsilon \leq c'\leq c$). Therefore, by Theorem~\ref{fa22}
\[
 M_{\mathcal S}(u)=a' \eta  \cdot M(\eta b'\varepsilon) \cdot \varepsilon c'=a'   \eta \cdot \eta M(b')\varepsilon \cdot \varepsilon c'=a' \cdot M(b') \cdot c'.
\]
Now we assume that $ru=(ra')\cdot b'\cdot c'$ and that $M_\mathcal S$ is a special matching of $w$. By the first part, since $ra'\leq a$ by the Lifting Property, we have $M_{\mathcal S}(ru)=ra'M(b')c'$, so if the result fails we have $M_{\mathcal S}(ru)\neq rM_{\mathcal S}(u)$. Applying if necessary $M_{\mathcal S}$ and $\lambda_r$ repeatedly to $u$ we can find an element $x\leq w$ such that $M_{\mathcal S}(x)\lhd x$, $rx\lhd x$ and $M_{\mathcal S}(rx)\neq rM_{\mathcal S}(x)$. We consider an element $x$ of smallest length satisfying these conditions. If $M_{\mathcal S}(x)$ and $rx$ are the only coatoms of $x$, then $x\in W_{\{r,M(e)\}}$, and this is a contradiction since $r\leq a$ and thus $M_{\mathcal S}$ commutes with $\lambda_r$ on $W_{\{r,M(e)\}}$ (by the definition of an $\mathcal S$-factorization if $r\in H$, by the definition of $M_{\mathcal S}$ otherwise). Otherwise there exists an element $y$ such that $y\lhd x$, with $y\notin \{M_{\mathcal S}(x),rx\}$. By the definition of special matchings we have $M_{\mathcal S}(y)\lhd y$ and $ry\lhd y$. By the minimality of $x$ we also have $M_{\mathcal S}(ry)=rM_{\mathcal S}(y)$. Two cases occur and the reader is invited to draw a picture of the following such cases:
\begin{itemize}
 \item $M_{\mathcal S}(y)=ry$: in this case we have $ry\lhd rx$ with $M_{\mathcal S}(ry)\neq rx$ but $M_{\mathcal S}(ry)\not\leq M_{\mathcal S}(rx)$ contradicting the definition of special matching;
 \item $M_{\mathcal S}(y)\neq ry$: in this case we can similarly show that $M_{\mathcal S}(ry)=rM_{\mathcal S}(y)\lhd rM_{\mathcal S}(x)$ with $M_{\mathcal S}(rM_{\mathcal S}(y))\neq rM_{\mathcal S}(x)$ but $M_{\mathcal S}(rM_{\mathcal S}(y))\not\leq M_{\mathcal S}(rM_{\mathcal S}(x))$ contradicting again the definition of special matching.
\end{itemize}
\end{proof}

A right version of the previous result holds (with a similar proof).

Theorem \ref{montecristo} implies the following result.
\begin{cor}\label{yesamatching}
\label{gorgona}
 Let $\mathcal S=(J,H,M)$ be a system for $w$ such that $M_\mathcal S(w)\lhd w$. Then $M_\mathcal S(u)\leq w$ for all $u\in [e, w]$.
\end{cor}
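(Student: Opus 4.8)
The plan is to deduce Corollary~\ref{yesamatching} from Theorem~\ref{montecristo} by a factorization-and-induction argument. Fix an $\mathcal S$-factorization $w=a\cdot b\cdot c$. Given $u\in[e,w]$, I would like to produce a factorization $u=a'\cdot b'\cdot c'$ with $a'\le a$, $b'\le b$, $c'\le c$ and $\ell(u)=\ell(a')+\ell(b')+\ell(c')$; by Theorem~\ref{montecristo} this gives $M_\mathcal S(u)=a'M(b')c'$, and then I must check $a'M(b')c'\le w$. Since $M$ is a matching of $[e,w_0(H)]$ and $b'\le b\le w_0(H)$, we have $M(b')\le w_0(H)$; more precisely, either $M(b')\lhd b'$, in which case $M(b')\le b'\le b$ and hence $a'M(b')c'\le a'b'c'=u\le w$ trivially, or $M(b')\rhd b'$. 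The substantive case is the latter, and there the claim $a'M(b')c'\le w$ should follow from $M_\mathcal S(w)\lhd w$ together with the Lifting Property for special matchings — but at this point we do not yet know $M_\mathcal S$ is special, so a direct subword/covering argument is needed instead.

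Concretely, I would first establish the existence of the desired factorization of $u$. By the Subword Property, choose a reduced word for $w$ obtained by concatenating reduced words for $a$, $b$, $c$; any reduced word for $u$ appears as a subword, which splits as $a'$ (inside the $a$-part) times $b'$ (inside the $b$-part) times $c'$ (inside the $c$-part), giving $a'\le a$, $b'\le b\le w_0(H)$, $c'\le c$ with lengths adding up. Next, apply Theorem~\ref{montecristo} (first alternative) to conclude $M_\mathcal S(u)=a'\,M(b')\,c'$. It remains to bound this element by $w$.

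For the bound, I would argue on $b'$ versus $M(b')$ inside $W_H$. If $M(b')\le b'$ then as noted $M_\mathcal S(u)\le u\le w$. Otherwise $b'\lhd M(b')$ in $[e,w_0(H)]$, so there is $h\in\{s,t\}$ (an element of $H$) with $M(b')=b'h$ or $M(b')=hb'$, and in fact one can write $M(b')=\eta^{-1}b'\varepsilon$ where $\eta,\varepsilon\in H\cup\{e\}$ with $\eta\le a'$-compatible and $\varepsilon\le c'$-compatible exactly as in the proof of Theorem~\ref{montecristo}; the $\mathcal S$-factorization axioms ($|\mathrm{Supp}_H(a)|\le1$, $M$ commuting with $\lambda_\alpha$, etc.) guarantee that the relevant left/right multiplications by $H$-letters commute with $M$ and that $a'\,M(b')\,c'$ has length $\ell(a')+\ell(M(b'))+\ell(c')=\ell(u)+1$. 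Thus $u\lhd M_\mathcal S(u)$, i.e. $M_\mathcal S(u)$ covers $u$ and $u^{-1}M_\mathcal S(u)\in T$. Now I would run an induction on $\ell(w)-\ell(u)$: for $u=w$ the hypothesis $M_\mathcal S(w)\lhd w$ gives the claim; for $u<w$ pick $v$ with $u\lhd v\le w$, apply the inductive hypothesis to get $M_\mathcal S(v)\le w$, and compare $M_\mathcal S(u)$ with $M_\mathcal S(v)$ using Corollary~\ref{weakdefiniscematching2} / Proposition~\ref{weakdefiniscematching} applied in $W_\mathcal S$ (or $W'_\mathcal S$): since $M_\mathcal S$ is already known to be a \emph{matching} of $W_\mathcal S$ and $W_\mathcal S\supseteq[e,w]$, the edge $\{u,M_\mathcal S(u)\}$ lies in the Hasse diagram of $W_\mathcal S$, and one deduces $M_\mathcal S(u)\le v\le w$ or $M_\mathcal S(u)\le M_\mathcal S(v)\le w$ by the usual lifting-type case analysis on whether $M_\mathcal S$ raises or lowers $u$ and $v$.

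The main obstacle I anticipate is precisely that $M_\mathcal S$ is not yet known to be \emph{special} at this stage of the paper (that is proved afterwards, using this very corollary), so I cannot invoke Lemma~\ref{lpfsm}. The workaround is to lean entirely on what is already available: $M_\mathcal S$ is a matching of the order ideal $W_\mathcal S$ (Corollary~\ref{weakdefiniscematching2}) which contains $[e,w]$, the explicit formula $M_\mathcal S(u)=a'M(b')c'$ from Theorem~\ref{montecristo}, and the covering description $u\lhd M_\mathcal S(u)\iff b'\lhd M(b')$. Combining the factorization of $u$ with the factorization $M_\mathcal S(w)=a\cdot M(b)\cdot c$ from Theorem~\ref{fa22} (valid since $M_\mathcal S(w)\lhd w$ forces $b\lhd M(b)$), one checks $a'\le a$, $M(b')\le M(b)$ (this is where $b'\le b$ and $b'\lhd M(b')$, $b\lhd M(b)$ in the dihedral interval $[e,w_0(H)]$ are used, via the Subword Property inside $W_H$), and $c'\le c$, whence $a'\,M(b')\,c'\le a\cdot M(b)\cdot c=M_\mathcal S(w)\lhd w$, giving $M_\mathcal S(u)\le w$ directly without any induction. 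I would present this last, cleaner route as the main line of argument, using the inductive route only as a fallback if the inequality $M(b')\le M(b)$ in $W_H$ turns out to need care in the non-dihedral edge cases.
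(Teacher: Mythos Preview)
Your overall strategy is the same as the paper's: factor $u=a'\cdot b'\cdot c'$ with $a'\le a$, $b'\le b$, $c'\le c$ via the Subword Property, apply Theorem~\ref{montecristo} to get $M_\mathcal S(u)=a'M(b')c'$, and then bound $M(b')$ inside $W_H$. However, there are two slips in your ``cleaner route'' that need fixing, and one of your worries is unfounded.

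First, the direction is reversed: $M_\mathcal S(w)\lhd w$ forces $M(b)\lhd b$ (by Theorem~\ref{fa22}), not $b\lhd M(b)$. Second, the inequality you state, $M(b')\le M(b)$, is \emph{false} in general (take $b'=M(b)$, so $M(b')=b\not\le M(b)$). What you actually need, and what the paper uses, is $M(b')\le b$: in the nontrivial case $b'\lhd M(b')$, apply Lemma~\ref{lpfsm}(3) to $M$ on $[e,w_0(H)]$ with $u=b'\le v=b$ to get $M(b')\le b$, and then the Subword Property gives $a'M(b')c'\le a\cdot b\cdot c=w$ (not $\le M_\mathcal S(w)$). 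Third, your concern about circularity is misplaced: the Lifting Property is applied to $M$, the given matching of the dihedral (or one-element) interval $[e,w_0(H)]$, where every matching is automatically special; it is not applied to $M_\mathcal S$. With these corrections, your argument becomes exactly the paper's proof, and the inductive fallback (which would itself need $M_\mathcal S$ to be special) is unnecessary.
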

\begin{proof}
Let $w=a\cdot b\cdot c$ and $u=a'\cdot b'\cdot c'$ with $a'\leq a$, $b'\leq b$, and $c'\leq c$. Theorem \ref{montecristo} implies  $M_\mathcal S(u)=a'M(b')c'$. If $M(b') \lhd b'$ then the result is trivial. Otherwise, we have 
\[
 M(b')\leq b
\]
by the Lifting Property for the special matching $M$ (see Lemma~\ref{lpfsm}), since  $b'\leq b$ and $M(b)\lhd b$ (being $M_\mathcal S(w)<w$). We conclude that $a'\cdot M(b') \cdot c'\leq w$ by the subword property.
\end{proof}
The next result tells us that if $H=\{s,t\}$ then we can always assume that $M(t)=ts$ if and only if $t\in K$.

\begin{pro}
\label{elba}
 Let $\mathcal S=(J,H,M)$ be a system for $w$, and $H=\{s,t\}$.  If $t\notin J$ then one of the following applies:
 \begin{itemize}
  \item $M(t)=ts$;
  \item $t\not \leq (w^J)^H$, $\mathcal S'=(J\cup \{t\},H,M)$ is a system for $w$, and $M_{\mathcal S}=M_{\mathcal S'}$.
  \end{itemize}
Symmetrically,  if $t\in J$  then one of the following applies:
 \begin{itemize}
  \item $M(t)=st$;
  \item $t\not \leq \; ^H(^K w)$, $\mathcal S'=(J\setminus \{t\},H,M)$ is a system for $w$, and $M_{\mathcal S}=M_{\mathcal S'}$.
 \end{itemize}
\end{pro}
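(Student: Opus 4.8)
The plan is to treat the two cases symmetrically; I will describe the first case ($t\notin J$, so $s,t\in K$ and $\mathcal S$ is of the first kind) in detail, the second following by the left-right symmetry already used throughout Section 4. First I would dispose of the easy dichotomy: either $t\leq (w^J)^H$ or not. If $t\leq (w^J)^H=a_{\mathcal S}(w)$, then since $|\mathrm{Supp}_H(a_{\mathcal S}(w))|\leq 1$ by Property [S2] we must have $\mathrm{Supp}_H(a_{\mathcal S}(w))=\{t\}$, hence $M$ commutes with $\lambda_t$ by [S2]. This forces $M(t)=ts$: indeed $\lambda_t M(e)=M\lambda_t(e)=M(t)$, and $\lambda_t M(e)=\lambda_t(s)=ts$ (note $ts\neq st$ unless $M$ would be a multiplication matching, but one checks directly that $M(t)\in\{t,ts,st\}$ and $M(t)=t$ is impossible as $\{t,e\}$ already forms the $M$-edge at $e$ would be violated, while $M(t)=st$ would make $M$ restricted to $[e,w_0(s,t)]$ behave like $\lambda_s$, contradicting [S0] when $|H|=2$). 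So in this subcase the first alternative holds and we are done.

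Next I would assume $t\not\leq (w^J)^H$ and verify that $\mathcal S'=(J\cup\{t\},H,M)$ is a system for $w$. Write $J'=J\cup\{t\}$ and let $K'$ be the $s$-complement of $J'$; since $t\in K\setminus C_s$ (as $t\notin C_s$, because $M$ is not a multiplication matching when $|H|=2$ — if $m_{s,t}=2$ the matching $M$ of $[e,w_0(s,t)]=[e,st]$ with $M(e)=s$ would be $\lambda_s=\rho_s$, violating [S0]), we get $K'=K\setminus\{t\}$, and $J'\cap K'=C_s$, $J'\cup K'=S$ as required. The inclusion $C_s\subseteq J'$ is clear. Property [S0] is unchanged. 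For [S1] I need $w^{J'}\in W_{K'}$; the key computation is that $w^{J'}=w^J$ — this uses $t\not\leq (w^J)^H$ together with $w^J\in W_K$: since $t$ is not in the support of $(w^J)^H$ and $w^J=(w^J)^H(w^J)_H$ with $(w^J)_H\in W_H\cap W_K$, one shows $t\not\leq w^J$, hence $w^J\in W^{J'}$ and $w^{J'}=w^J\in W_K\subseteq W_{K'}\cup W_{\{t\}}$; combined with $t\not\leq w^J$ this gives $w^{J'}\in W_{K'}$. Properties [S2] then transfer because $(w^{J'})^H=(w^J)^H$ and, for the second half of [S2], $^{K'}w$ has support in $H$ differing from that of $^Kw$ only possibly by $t$, and a short argument with Lemma~\ref{ciao} shows $^{H}(^{K'}w)$ and $^{H}(^{K}w)$ have the same $H$-support.

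Finally I would prove $M_{\mathcal S}=M_{\mathcal S'}$. Since $\mathcal S$ is of the first kind, so is $\mathcal S'$ (as $H\subseteq K'\cup$... in fact $H=\{s,t\}$ with $s\in K'$ but $t\notin K'$ — wait, here one must note $\mathcal S'$ is of the \emph{second} kind since now $t\in J'$ while $s\in J'\cap K'=C_s$; so $H\subseteq J'$). By Theorem~\ref{fa22} it suffices to exhibit, for each $u\leq w$, a single factorization that is simultaneously an $\mathcal S$-factorization and an $\mathcal S'$-factorization: then both $M_{\mathcal S}(u)$ and $M_{\mathcal S'}(u)$ equal $a\cdot M(b)\cdot c$. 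Take $u=a_{\mathcal S}(u)\cdot b_{\mathcal S}(u)\cdot c_{\mathcal S}(u)$, the canonical $\mathcal S$-factorization from Lemma~\ref{see}. The conditions defining an $\mathcal S'$-factorization differ from those of an $\mathcal S$-factorization only in the parabolic subgroups: $a$ must lie in $W_{K'}\cap W^H$ rather than $W_K\cap W^H$, and $c$ in $W_{J'}\cap{}^HW$ rather than $W_J\cap{}^HW$. The condition on $a$ is automatic since $a_{\mathcal S}(u)\leq a_{\mathcal S}(w)=(w^J)^H$ has no $t$ in its support (as $t\not\leq(w^J)^H$), so $a_{\mathcal S}(u)\in W_{K\setminus\{t\}}=W_{K'}$. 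The condition on $c$ only gets weaker ($W_J\subseteq W_{J'}$), and the support-and-commutation clause on $c$ is about $\mathrm{Supp}_H(c)$ which is unchanged. Hence the same triple works for both, and we conclude $M_{\mathcal S}(u)=M_{\mathcal S'}(u)$ for all $u\leq w$, i.e.\ $M_{\mathcal S}=M_{\mathcal S'}$.

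I expect the main obstacle to be the bookkeeping in showing $w^{J'}=w^J$ and that the support conditions [S2] survive the enlargement of $J$ — concretely, proving $t\not\leq w^J$ from $t\not\leq (w^J)^H$ together with $w^J\in W_K$, and the symmetric statement for $^{K'}w$ via Lemma~\ref{ciao}; everything else is routine verification that the axioms transfer and that a common factorization exists.
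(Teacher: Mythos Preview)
Your overall strategy matches the paper's, and the argument for $M_{\mathcal S}=M_{\mathcal S'}$ via the canonical $\mathcal S$-factorization $a_{\mathcal S}(u)\cdot b_{\mathcal S}(u)\cdot c_{\mathcal S}(u)$ is correct (and in fact slightly cleaner than the paper's choice of common factorization). However, there are two real problems upstream.

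First, the claim $w^{J'}=w^J$ is false. From $t\not\leq (w^J)^H$ you cannot conclude $t\not\leq w^J$: the factor $(w^J)_H\in W_H$ may well contain $t$. For a concrete counterexample take $S=\{s,t\}$, $m_{s,t}\geq 5$, $J=\{s\}$, $w=tsts$; then $w^J=tst$ has $(w^J)^H=e$ but $t\leq w^J$, and indeed $J'=S$ gives $w^{J'}=e\neq w^J$. The correct relation, which the paper uses, is only $w^{J'}\leq (w^J)^H$: from $w=(w^J)^H\cdot\big((w^J)_H\cdot w_J\big)$ with the bracketed factor in $W_{J'}$ one gets that $(w^J)^H$ lies in $wW_{J'}$, hence dominates $w^{J'}$. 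This weaker relation is still enough for [S1] and the first half of [S2] (using $H\subseteq J'$ so $(w^{J'})^H=w^{J'}$), so your error is repairable here, but it is an error.

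Second, and more seriously, the ``short argument with Lemma~\ref{ciao}'' for the second half of [S2] is where the real work lies, and you have not done it. You must show that $t\not\leq\,^H(^{K'}w)$ and that $s\leq\,^H(^{K'}w)$ implies $s\leq\,^H(^Kw)$. Lemma~\ref{ciao} alone does not give this: knowing $^{K'}w\in W_{J'}$ still allows both $s$ and $t$ in its $H$-support a priori. The paper's argument is substantial: it writes $w=(w^J)^H\cdot y$ with $y=(w^J)_H\cdot v\cdot\,^Kw$ and $v\in W_{C_s}$, identifies $\,^Hy$ explicitly, checks $tst\leq\,_Hy$ (using that $M$ is not a multiplication matching), and then runs a maximality argument on factorizations $w=x\cdot a$ with $x\in W_{K'}$ and $\,^Ha\in W_J$, using Lemmas~\ref{lemma0cap} and~\ref{lemma2cap} to push letters from $a$ into $x$ until $x=\,_{K'}w$. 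None of this is routine bookkeeping, and your proposal does not indicate awareness of why the $t$-support of $\,^H(^{K'}w)$ is controlled. This is the genuine gap.
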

\begin{proof}
We only prove the first statement. 

 We recall that $s$ and $t$ do not commute. For short, let $J'=J\cup \{t\}$. Clearly $\mathcal S'$ satisfies Property~[S0]   since $\mathcal S$ does.

 If $t\leq (w^J)^H$ then $M$ commutes with $\lambda_t$ and thus $M(t)=M \circ \lambda_t(e)=\lambda_t \circ M(e)=ts$.
 Hence we can assume that $t\not \leq (w^J)^H$. Since
 \[
  w=(w^J)^H \cdot (w^J)_H  \cdot w_J
 \]
 with $(w^J)_H \cdot w_J\in W_{J'}$, we deduce that $w^{J'}\leq (w^J)^H$, so $t \not \leq w^{J'}=(w^{J'})^H$, and if $s\leq (w^{J'})^H$ then $s\leq (w^{J})^H$; thus $\mathcal S'$ satisfies the first part of Property~[S2].
   Moreover, we have $w^{J'}\in W_{K\setminus \{t\}}=W_{K'}$, where $K'$ is the $s$-complement of $J'$; thus $\mathcal S'$ satisfies Property~[S1].
 
In order  to show that $\mathcal S'$ satisfies the second part of Property~[S2], we prove that $t\not \leq \,^H(^{K'}w)$ and that $s \leq  \,^H(^{K'}w)$ implies $s \leq  \,^H(^{K}w)$ (this is enough since $\mathcal S$ satisfies Property~[S2]).

 Since $\mathcal S$ is a system for $w$, Lemma~\ref{ciao} implies 
 \[
  w=w^J \cdot v \cdot \,^Kw=(w^J)^H \cdot  (w^J)_H \cdot  v \cdot  \,^Kw,
 \]
 with $v\in W_{C_s}$. 
 We set, for simplicity, $y=(w^J)_H \cdot v \cdot \,^Kw$, so  $w=(w^J)^H \cdot \,y$.
 Let 
 \[\varepsilon=\begin{cases}
                s&\textrm{if }s\in D_L(v)\\ e&\textrm{otherwise.}
               \end{cases}
\]
Note $t\not \leq \,^Kw$ since $^Kw\in W_J$. The parabolic components of $y$ are as follows: $_Hy= (w^J)_H \cdot \varepsilon$ and $\,^Hy=(\varepsilon v) \cdot \,^Kw$ since $t\not \leq (\varepsilon v) \cdot \,^Kw$ and $s\notin D_L(\,^Kw)$ being $s\in K$ (notice that $ \varepsilon v \in W_{C_s\setminus\{s\}}$).  Finally,  $tst\leq \,_Hy$ since $tst\leq w$ (because $M$ is not a multiplication matching on $[e,w_0(H)]$) and $t$ does not appear in the other factors of $w=(w^J)^H \cdot \,_Hy \cdot \,^Hy$.

Therefore, we have that the factorization $w=(w^J)^H \cdot y$ is a factorization of the form $w=x\cdot a$ which satisfies the following properties
\begin{itemize}
 \item $\ell(w)=\ell(x)+\ell(a)$,
 \item $x\in W_{K'}$,
 \item $\,^Ha \in W_J$,
 \item $s\leq \,^Ha$ if and only if $s \leq \,^Kw$.
\end{itemize}
We claim that if $w=x\cdot a$ is a factorization satisfying these properties with $\ell(x)$ as big as possible, then $x=\,_{K'}w$ and $a=\,^{K'}w$. From the claim, it follows that $t\not \leq \,^H(\,^{K'}w)=\,^Ha$ and $s \leq \,^H(\,^{K'}w)$ if and only if $s \leq \,^H(^Kw)$, since $\,^H(^Kw)=\,^Kw$ (because $H\subseteq K$).

Let  $x\cdot a$ satisfy the conditions above with $\ell(x)$ as big as possible: we have to show that $a$ does not have left descents in $K'$. By contradiction,  let  $r\in K' \cap D_L(a)$. 

If  $r=s$, then $s \in D_L(\,_Ha)$ and the factorization $w=(xs)\cdot (sa)$ still satisfies the conditions above (since $_H(sa)=s(_Ha)$ and $\,^H(sa)=\,^Ha$) and contradicts the maximality of $\ell(x)$.

If  $r\neq s$ and  $s\notin D_L(a)$, then $_Ha$ has a prefix $t\text{-}s\text{-}t$ and by (the left version of) Lemma~\ref{lemma2cap} we deduce that $r$ commutes with $s$ and $t$. Therefore, we have
\[
 w=(xr) \cdot (_Ha) \cdot (r\,^Ha).
\]
We observe that  $r\in D_L(\,^Ha)$, by  Lemma~\ref{lemma0cap},  and that $r\,^Ha\in \,^HW$, since $r$ commutes with $s$ and $t$. In particular the factorization
\[
 w=(xr)\cdot (ra)
\]
still contradicts the maximality of $\ell(x)$ since $_H(ra)=\,_Ha$ and $^H(ra)=r\,^Ha$.

We need to show that $M_{\mathcal S}=M_{\mathcal S'}$. Let $u\leq w$: since $\mathcal S$ is a system for $w$, Lemma~\ref{ciao} implies 
\[
  u=u^J \cdot \bar{v} \cdot \,^Ku=(u^J)^H \cdot  (u^J)_H \cdot  \bar{v} \cdot  \,^Ku
 \]
with $\bar{v}\in W_{C_s}$. Let 
 \[\bar{\varepsilon}=\begin{cases}
                s&\textrm{if }s\in D_L(\bar{v})\\ e&\textrm{otherwise.}
               \end{cases}
\]
Then  the factorization
\[
 u= (u^J)^H \cdot  \Big((u^J)_H \cdot \bar{\varepsilon} \Big) \cdot (\bar{\varepsilon} \bar{v} \cdot  \,^Ku)
\]
is both an $\mathcal S$-factorization and an $\mathcal S'$-factorization: the unique non immediate property follows by the fact that $t\not\leq \bar{\varepsilon} \bar{v} \cdot  \,^Ku$ and the fact that $s\leq \bar{\varepsilon} \bar{v} \cdot  \,^Ku$ implies $s \leq \, ^Ku= \,^H(^Ku)\leq \,^H(^Kw)$.  
Thus $M_{\mathcal S}(u)=M_{\mathcal S'}(u)$ by Theorem~\ref{fa22}.
\end{proof}

\begin{cor}
\label{giannutri}
 Let $w\in W$. If $M$ is a special matching of $w$, then there exists a system $\mathcal S$ for $w$ such that $M=M_{\mathcal S}$.
 Vice versa, if $\mathcal S$ is a system for $w$ such that $M_\mathcal S(w)\lhd w$, then $M_\mathcal S$ is a special matching of $w$.
\end{cor}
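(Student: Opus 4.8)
The plan is to combine the machinery built up in the previous sections (Theorem~\ref{fa22}, Theorem~\ref{montecristo}, Corollary~\ref{gorgona}, Proposition~\ref{elba}) with the classification of special matchings via right and left systems recorded in Theorem~\ref{caratteri}. The statement has two halves, and I would treat them separately.

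For the second half, assume $\mathcal S=(J,H,M)$ is a system for $w$ with $M_\mathcal S(w)\lhd w$. By Corollary~\ref{gorgona} we already know $M_\mathcal S(u)\le w$ for all $u\le w$, and by (the first-kind case of) Corollary~\ref{weakdefiniscematching2} together with its second-kind analogue, $M_\mathcal S$ is an involution whose value on each $u$ differs from $u$ by a single cover relation; so $M_\mathcal S$ is a matching of $[e,w]$. It remains to check the special property: if $x\lhd y\le w$ and $M_\mathcal S(x)\ne y$, then $M_\mathcal S(x)\le M_\mathcal S(y)$. The natural route is induction on $\ell(y)$, using the Lifting Property for special matchings (Lemma~\ref{lpfsm}) applied to the honest special matching $M$ on $[e,w_0(H)]$. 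Concretely, fix $\mathcal S$-factorizations of $y$ as $y=a\cdot b\cdot c$ with $a\le a_\mathcal S(w)$, etc.; writing $x=a'\cdot b'\cdot c'$ compatibly (which one can always arrange by pushing a covering reflection into one of the three factors), Theorem~\ref{montecristo} gives $M_\mathcal S(x)=a'M(b')c'$ and $M_\mathcal S(y)=aM(b)c$, and the inequality reduces to a comparison of the middle factors governed by the special matching $M$, plus a subword-property bookkeeping argument for the outer factors. The case distinctions mirror the three cases of Lemma~\ref{lpfsm}.

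For the first half, let $M$ be an arbitrary special matching of $w$. By Theorem~\ref{caratteri}(2), $M=M_\mathcal R$ for a right system $\mathcal R=(J,s,t,M_{st})$ or $M=\,_\mathcal L M$ for a left system $\mathcal L$. I would show that a right system gives rise to a system $\mathcal S$ of the first kind (taking $H=\{s,t\}$ when $t$ is genuinely involved, i.e.\ $M_{st}\ne\rho_s$, and $H=\{s\}$, $M=\rho_s$ restricted appropriately, in the multiplication case), and symmetrically that a left system gives a system of the second kind. The bulk of the work is translating Properties R1--R5 (resp.\ L1--L5) into Properties [S0]--[S2]: [S0] corresponds to the dichotomy between $M_{st}=\rho_s$ and $M_{st}$ non-multiplicative; [S1] is exactly the content of Lemma~\ref{ciao} once one checks $w^J\in W_K$ from R3 together with the descent constraints; and [S2] repackages R4(b),(c) and R5 (which control commutation of $M_{st}$ with $\lambda$'s and $\rho$'s). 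One must also verify that the action matches, i.e.\ $M_\mathcal R(u)=M_\mathcal S(u)$ for all $u\le w$: this follows because the explicit formula defining $M_\mathcal R$ is literally of the form $a\cdot M_{st}(b)\cdot c$ for a specific $\mathcal S$-factorization of $u$, so Theorem~\ref{fa22} identifies it with $M_\mathcal S(u)$. Finally, Proposition~\ref{elba} lets me normalize $H$ so that, when $|H|=2$, $M(t)=ts$ iff $t\in K$, which is harmless since $M_\mathcal S$ is unchanged; this ensures the resulting $\mathcal S$ genuinely satisfies all axioms of Definition~\ref{sistema}.

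The main obstacle I anticipate is the first half — specifically, the faithful dictionary between the two axiom systems. The definitions of right/left systems in \cite{CM} are tuned to a particular factorization, whereas Definition~\ref{sistema} is factorization-agnostic, so verifying [S1] and [S2] requires re-deriving the support bounds on $(w^J)^H$ and $^H(^Kw)$ from R3/R4/R5 rather than quoting them; the edge cases where $s$ or $t$ fails to lie below the relevant minimal-coset representative (exactly the situations Proposition~\ref{elba} is designed to absorb) are where the bookkeeping is most delicate. The specialness of $M_\mathcal S$ in the converse direction, by contrast, is comparatively routine once Theorem~\ref{montecristo} is in hand, since that theorem does the heavy lifting of computing $M_\mathcal S$ on elements without a canonical factorization.
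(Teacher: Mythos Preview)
Your first half (special matching $\Rightarrow$ system) matches the paper's approach: invoke Theorem~\ref{caratteri} to obtain a right or left system, then set $H=\{s\}$ or $H=\{s,t\}$ according to whether $M_{st}$ is a multiplication matching, and check that the resulting triple satisfies [S0]--[S2]. The paper treats this translation as essentially immediate (the axioms line up directly), so your worry that this is ``the main obstacle'' is overstated; in particular you do not need Proposition~\ref{elba} here at all.

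Your second half, however, takes a genuinely different route. The paper does \emph{not} prove directly that $M_\mathcal S$ is special. Instead it runs the dictionary in reverse: given a system $\mathcal S=(J,H,M)$ with $M_\mathcal S(w)\lhd w$, it uses Proposition~\ref{elba} (when $|H|=2$) to produce an explicit right or left system $\mathcal R$ with $M_\mathcal S=M_\mathcal R$, and then appeals to Theorem~\ref{caratteri}(1), which already asserts that matchings coming from right/left systems are special. Your plan is to verify the special property by hand: factor $w=A\cdot B\cdot C$, pull back to compatible length-additive factorizations $y=a\cdot b\cdot c$ and $x=a'\cdot b'\cdot c'$ of a covering pair $x\lhd y$ via the subword property, compute $M_\mathcal S$ on both using the first part of Theorem~\ref{montecristo}, and reduce to the specialness of $M$ on the dihedral interval $[e,w_0(H)]$. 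This works (and is arguably more self-contained, since it does not route back through \cite{CM}), but it is longer than the paper's two-line reduction, and you should be careful to use only the \emph{first} part of Theorem~\ref{montecristo}; the second part already assumes $M_\mathcal S$ is special and would be circular here. You should also note explicitly that any matching of a dihedral interval is special, since Definition~\ref{sistema} does not require this of $M$.
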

\begin{proof}
 We use Theorem~\ref{caratteri}. Let $M$ be a special matching of $w$ and suppose that $M$ is associated with a  right system $\mathcal R=(J,s,t,M)$ for $w$.
 If $M$ is a multiplication matching on $[e,w_0(s,t)]$, then $\mathcal S=(J,\{s\},M)$ is a system for $w$ and $M_\mathcal R=M_\mathcal S$.
 If $M$ is not a multiplication matching on $[e,w_0(s,t)]$, then $\mathcal S=(J,\{s,t\},M)$ is a system and $M_\mathcal R=M_\mathcal S$. The situation is similar in the case when $M$ is associated with a left systems.
 
Now let $\mathcal S=(J,H,M)$ be a system for $w$ such that $M_\mathcal S(w)\lhd w$. If $|H|=1$ (that is, $H=\{s\}$), then $M_\mathcal S = M_\mathcal R$ where $\mathcal R=(J,s,t,\rho_s)$ and $t$ is any element in $(S\setminus J)\cup (C_s\setminus \{s\})$.
 If $|H|=2$, $H=\{s,t\}$, then it follows by Proposition~\ref{elba} that $M_\mathcal S = M_\mathcal R$ with 
 $$\mathcal R=\begin{cases}
 \text{ the right system $(J,s,t, M)$}                      & \text{if $t\notin J$ and $M(t)=ts$}\\
\text{ the right system $(J\setminus \{t\},s,t,M)$} & \text{if $t\in J$ and $M(t)=ts$}\\
\text{ the left system $(K\setminus\{t\},s,t,M)$}   & \text{if $t\notin J$ and $M(t)=st$}\\
\text{ the left system $(K,s,t,M)$}                         & \text{if $t\in J$ and $M(t)=st$}
 \end{cases}$$
where $K$ is the $s$-complement of $J$. 
\end{proof}
For $w\in W$, let $SM_w$ be the set of systems $\mathcal S=(J,H,M)$ for $w$ such that $M_\mathcal S(w)\lhd w$ and such that, for all $\alpha \in H$, $M_{\mathcal S}(\alpha)=s\alpha$ if and only if $\alpha \in J$.

\begin{thm}
 The set 
 \[
 \{ M_{\mathcal S}: \mathcal S\in SM_w\}
 \]
is a complete list of all distinct special matchings of $w$.
\end{thm}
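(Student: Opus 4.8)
The plan is to show two things: (a) every special matching of $w$ equals $M_{\mathcal S}$ for some $\mathcal S\in SM_w$, and (b) distinct systems in $SM_w$ give distinct matchings. Part (a) starts from Corollary~\ref{giannutri}, which already produces \emph{some} system $\mathcal S_0=(J_0,H_0,M_0)$ with $M=M_{\mathcal S_0}$ and $M_{\mathcal S_0}(w)\lhd w$. If $|H_0|=1$, then $H_0=\{s\}$ and we only need to arrange the right-descent condition for $r\in S$: replacing $J_0$ by $J_0\cup\{r: M(r)=sr\}$ (and checking, via Proposition~\ref{elba}-type reasoning or directly, that this remains a system with the same associated matching) gives the desired $\mathcal S\in SM_w$. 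If $|H_0|=2$, $H_0=\{s,t\}$, we invoke Proposition~\ref{elba}: if $M(t)=ts$ we want $t\in J$, and if the current system does not have this, Proposition~\ref{elba} lets us move $t$ into $J$ without changing $M_{\mathcal S}$; symmetrically if $M(t)=st$ we want $t\notin J$. After this normalization, $M_{\mathcal S}(\alpha)=s\alpha$ iff $\alpha\in J$ holds for $\alpha\in H$ (the case $\alpha=s$ being automatic, since $M(s)=e$ and $s\in J$ always as $s\in C_{M(e)}\subseteq J$; the case $\alpha=t$ being exactly what we just arranged). So $\mathcal S\in SM_w$ and $M=M_{\mathcal S}$.

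Part (b): I would deduce injectivity from Theorem~\ref{caratteri}. Given $\mathcal S=(J,H,M)\in SM_w$, Corollary~\ref{giannutri} exhibits $M_{\mathcal S}$ as $M_{\mathcal R}$ for an explicit right or left system, and moreover the system parameters $(s,J\cup C_s$ or $K\cup C_s)$, together with whether $M_{st}=\rho_s$, are recoverable from $M_{\mathcal S}$ itself: $s=M_{\mathcal S}(e)$, the set $J$ is recovered as $\{r\in S: M_{\mathcal S}(r)=sr\}$ (this is exactly the defining property of $SM_w$, pushed from $H$ to all of $S$ — note that for $r\notin H$ one has $M_{\mathcal S}(r)=r^J\cdot M(r_J)\cdot\,^Kr$ with $r^J,{}^Kr\in\{e,r\}$, which one checks equals $sr$ iff $r\in J$), and $H$ is recovered as the set on which $M_{\mathcal S}$ is not a multiplication matching together with $\{s\}$. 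Since the whole triple $(J,H,M)$ is thus determined by $M_{\mathcal S}$ (the matching $M$ being $M_{\mathcal S}$ restricted to $[e,w_0(H)]$), the map $\mathcal S\mapsto M_{\mathcal S}$ is injective on $SM_w$.

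Combining (a) and (b), $\{M_{\mathcal S}:\mathcal S\in SM_w\}$ is precisely the set of special matchings of $w$, with no repetitions. I expect the main obstacle to be part (a) in the subtle case $|H|=1$: one must verify that enlarging $J$ by all $r$ with $M(r)=sr$ still yields a \emph{system} (Properties [S0]–[S2]), and that the associated matching is unchanged — the cleanest route is probably to observe that when $|H|=1$ the matching $M_{\mathcal S}$ is the multiplication matching $\rho_s$ (or $\lambda_s$) followed by the appropriate parabolic bookkeeping, so $M_{\mathcal S}(r)=sr$ iff $r\in J$ is forced and the enlargement is automatic; alternatively, appeal to the classification of right systems in Theorem~\ref{caratteri}(iii) with $M_{st}(u)=us$, where $J$ only matters through $J\cup C_s$, and choose the representative $J=\{r:M_{\mathcal S}(r)=sr\}$. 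Either way this is the one place where the argument is not a direct citation but requires checking the axioms by hand; everything else follows formally from Proposition~\ref{elba}, Corollary~\ref{giannutri}, and Theorem~\ref{caratteri}.
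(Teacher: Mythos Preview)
Your approach is essentially the paper's: surjectivity via Corollary~\ref{giannutri} (plus Proposition~\ref{elba} to normalize), injectivity via Theorem~\ref{caratteri}. However, there are two issues worth flagging.

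First, in part (a) with $|H_0|=2$ you have the normalization backwards. The $SM_w$ condition reads $M_{\mathcal S}(t)=st \iff t\in J$; since $M_{\mathcal S}(t)=M(t)$ and $st\neq ts$ (as $m_{s,t}\geq 3$), this says $M(t)=ts$ should force $t\notin J$, not $t\in J$. Proposition~\ref{elba} is tailored to exactly this (correct) direction: if $t\notin J$ then either $M(t)=ts$ (already good) or one may replace $J$ by $J\cup\{t\}$; if $t\in J$ then either $M(t)=st$ (already good) or one may replace $J$ by $J\setminus\{t\}$. Your proposed invocation (``if $M(t)=ts$ and $t\notin J$, move $t$ into $J$'') is not what the proposition provides, and in fact would land you outside $SM_w$. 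Once you flip the direction, the argument goes through.

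Second, your worry about the $|H_0|=1$ case is unnecessary. For any system one has $C_s\subseteq J$ by definition, and for $r\in S\setminus H$ the $\mathcal S$-factorization $r=e\cdot e\cdot r$ (if $r\in J$) or $r=r\cdot e\cdot e$ (if $r\in K$) immediately gives $M_{\mathcal S}(r)=sr$ if $r\in J$ and $M_{\mathcal S}(r)=rs\neq sr$ if $r\in K\setminus C_s$. So the condition ``$M_{\mathcal S}(r)=sr\iff r\in J$'' holds automatically for all $r\notin H$, and when $H=\{s\}$ the remaining case $r=s$ is trivial. No enlargement of $J_0$ is needed; any system with $|H|=1$ and $M_{\mathcal S}(w)\lhd w$ already lies in $SM_w$. (This same observation is what makes your recovery of $J$ in part (b) work, as you note there.) With these two corrections your argument is complete and matches the paper's.
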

\begin{proof}
Straightforward by Theorem~\ref{caratteri} and Corollary~\ref{giannutri}.
\end{proof}
\begin{rem}We observe that one of the main achievements in Definition \ref{sistema} is the elimination of Properties R2 and L2 that we have in Definitions \ref{destro} and \ref{sinistro}: we no longer need to know in advance that the associated map is a matching on the lower Bruhat interval (which was also the most difficult and technical part to verify in the definition of right and left systems): it is now a consequence of the other axioms (Corollary \ref{yesamatching}).  
 
\end{rem}

\section{The combinatorial invariance}\label{finalsection}

In this section we show how the main result of \cite{BCM1} (that is, the combinatorial invariance of Kazhdan-Lusztig polynomials of lower Bruhat intervals  in any arbitrary Coxeter group) can be easily deduced from results in the previous sections.

We fix a system $\mathcal S=(J,H,M)$ for $w\in W$ and we simply denote $M_{\mathcal S}$ by $M$.
\begin{pro}\label{comm}
Suppose that $[e,w]$ is not a dihedral interval. There exists a multiplication matching $N$ of $w$ such that $M(w)\neq N(w)$ and $MN(u)=NM(u)$ for all $u\leq w$.
\end{pro}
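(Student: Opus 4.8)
The plan is to produce the matching $N$ by exhibiting a suitable generator $r\in S$ and setting $N=\lambda_r$ or $N=\rho_r$, and then to show that $M$ and $N$ commute on $[e,w]$ by working through the $\mathcal S$-factorizations. First I would dispose of the degenerate case. If $M$ is itself a multiplication matching (i.e. $|H|=1$, so $M=\rho_s$ or $M=\lambda_s$ after the reductions of Corollary~\ref{giannutri}), then since $[e,w]$ is not dihedral, $w$ has at least two distinct right descents or the support of $w$ has size at least $2$; a standard argument gives a second multiplication matching commuting with $M$ — two commuting generators act by commuting $\lambda$'s/$\rho$'s, and a left and right multiplication always commute. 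So I would assume $|H|=2$, $H=\{s,t\}$, and $M$ acts on the middle factor $b$ of an $\mathcal S$-factorization $w=a\cdot b\cdot c$.

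The key idea is that since $[e,w]$ is not dihedral, the element $w$ is "bigger than its $H$-part": concretely, at least one of $a$ or $c$ is nontrivial, or $w$ has a descent outside $H$, or $\mathrm{Supp}(w)\supsetneq H$. I would look for a generator $r\in D_R(c)$ (a right descent of the rightmost factor $c$), and set $N=\rho_r$. If no such $r$ exists because $c=e$, then symmetrically look for $r\in D_L(a)$ and set $N=\lambda_r$. If both $a=c=e$ then $w\in W_H$ itself, contradicting the non-dihedral hypothesis (since then $[e,w]\subseteq W_H$ is a dihedral interval, $w\leq w_0(H)$ — here one must also check that $\mathrm{Supp}(w)\not\subseteq H$ is impossible in this case, handling the stray generators via the support). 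Having fixed $N=\rho_r$ with $r\in D_R(c)$, first note $M(w)=a\cdot M(b)\cdot c\neq a\cdot b\cdot c\cdot r=N(w)$ since these differ already in the middle/right factors (and $M(b)\neq b$). For commutativity: given $u\leq w$ with $\mathcal S$-factorization $u=a'\cdot b'\cdot c'$, I would show $N(u)=a'\cdot b'\cdot(c'r^{\pm 1})$ admits a compatible $\mathcal S$-factorization — using Theorem~\ref{montecristo} (the right version) to justify computing $M(N(u))$ from the perturbed factorization — and that both $MN(u)$ and $NM(u)$ equal $a'\cdot M(b')\cdot(\text{the }c'\text{-factor with }r\text{ toggled})$, because $M$ only touches the middle factor $b'$ and $N$ only touches $c'$. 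The subtlety is that $r$ interacts with the $H$-part only through the conditions $|\mathrm{Supp}_H(c)|\leq 1$ and "$M$ commutes with $\rho_\beta$" from the definition of an $\mathcal S$-factorization; these are exactly what guarantees the two operations do not interfere.

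The main obstacle I anticipate is the bookkeeping when $r\in D_R(c')$ might fail even though $r\in D_R(c)$ — i.e. $c'$ is a proper Bruhat-smaller element of $c$ that may not have $r$ as a descent. In that situation $\rho_r(u)=ur$ need not have $u$ as a prefix of $ur$ with the clean factorization, and one must instead appeal to the second alternative hypothesis of Theorem~\ref{montecristo} (where $M_\mathcal S$ is already known to be special, via Corollary~\ref{giannutri}/\ref{yesamatching}) to still extract $M(\rho_r(u))$ from $a'$, $b'$, and the relevant perturbed right factor. Reconciling the two cases $ur>u$ and $ur<u$ uniformly, and checking in the latter that the "shortened" right factor still satisfies the $\mathcal S$-factorization axioms (in particular that toggling $r$ does not create a second element of $\mathrm{Supp}_H$ in the right factor, which is where $r\notin H$ and $r$'s commutation with $\beta$ — via Lemma~\ref{lemma2cap} — must be invoked), is where the real work lies. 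Once that is in place, the equality $MN(u)=NM(u)=a'\cdot M(b')\cdot(c'\text{ with }r\text{ toggled})$ is immediate from Theorem~\ref{fa22} applied to both orders of composition.
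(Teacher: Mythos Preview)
Your approach is essentially the paper's: from an $\mathcal S$-factorization $w=a\cdot b\cdot c$ pick $r\in D_L(a)$ (or $D_R(c)$), set $N=\lambda_r$ (or $\rho_r$), and invoke both parts of Theorem~\ref{montecristo} to get $M(u)=a'M(b')c'$ and $M(ru)=(ra')M(b')c'$ for any $u=a'\cdot b'\cdot c'$ with $a'\leq a$, $b'\leq b$, $c'\leq c$. Your case split on $|H|$ and your anticipated bookkeeping about preserving the $\mathcal S$-factorization axioms on the perturbed factor are unnecessary, since Theorem~\ref{montecristo} only asks for $a'\leq a$, $b'\leq b$, $c'\leq c$ together with one of the two length conditions (and $ra'\leq a$ is immediate from the Lifting Property); with this observation the proof collapses to a few lines.
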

\begin{proof}
 Let $w=a\cdot b\cdot c$ be an $\mathcal S$-factorization. If $a=c=e$ then $[e,w]$ is a dihedral interval contradicting our hypothesis. We may therefore assume $a\neq e$, the case $c\neq e$ being entirely similar. Let $r\in D_L(a)$ and we prove the result with $N=\lambda_r$. It is clear that $\lambda_r(w)\neq M(w)$ since they are obtained from a reduced expression of $w$ (namely a concatenation of reduced expressions of $a$, $b$ and $c$) by deleting two distinct letters.

 Now let $u\leq w$ and  $u= a' \cdot b' \cdot c'$ with $a'\leq a$, $b'\leq b$ and $c'\leq c$.

 The first part of Theorem \ref{montecristo} implies $M(u)=a'M(b')c'$. Moreover, the second part of Theorem \ref{montecristo} implies $M(ru)=ra'M(b')c'$ since $ra' \leq a$ (by the Lifting Property) and $r(ru)=(r(ra'))\cdot b'\cdot c'$.  The result follows.
\end{proof}

Let $M$ be a special matching of $w$ and $N$ be a multiplication matching of $w$ such that  $M(w)\neq N(w)$ and $MN(u)=NM(u)$ for all $u\leq w$ ($N$ exists by  Proposition~\ref{comm}). Fix $u\in [e,w]$. We let $\mathcal O_u$ be the orbit of $u$ under the action of the group generated by the  special matchings $M$ and $N$, so
$$\mathcal O_u= \begin{cases} 
\{u,M(u)\}  & \text{ if $M(u)=N(u)$} \\ 
\{u,M(u), N(u),MN(u)\} & \text{ if $M(u)\neq N(u)$.}
\end{cases}
$$
We let $\mathcal U$ be the free $\mathbb Z[q]$-module with basis $\mathcal O_u$.
\begin{lem}
\label{Fcomm} Suppose $|\mathcal O_u|=4$ and let, for all $x\in \mathcal O_u$, the polynomials $f_x,\tilde{f}_x,g_x,\tilde{g}_x\in \mathbb Z[q]$ be such that $f_x=f_{N(x)}$, $\tilde{f}_x=\tilde{f}_{N(x)}$, $g_x=g_{M(x)}$ and $\tilde{g}_x=\tilde{g}_{N(x)}$. Consider
 the endomorphisms $F,G$ of $\mathcal U$ uniquely determined by
\[
 F(x)=f_xx+\tilde{f}_xM(x)\qquad G(x)=g_xx+\tilde{g}_xN(x)
\]
for all $x\in \mathcal O_u$. Then $F \circ G=G \circ F$.

\end{lem}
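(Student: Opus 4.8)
The plan is to check $F\circ G=G\circ F$ directly on the basis $\mathcal O_u$ of $\mathcal U$. Since $|\mathcal O_u|=4$, the elements $u,M(u),N(u),MN(u)$ are distinct, and, $N$ having been chosen so that $MN=NM$ on $[e,w]$ (Proposition~\ref{comm}), the matchings $M$ and $N$ act on $\mathcal O_u$ as two commuting involutions: $M$ interchanges $u\leftrightarrow M(u)$ and $N(u)\leftrightarrow MN(u)$, while $N$ interchanges $u\leftrightarrow N(u)$ and $M(u)\leftrightarrow MN(u)$. In particular, for every $x\in\mathcal O_u$ one has $M(N(x))=N(M(x))$ and $\{x,\,M(x),\,N(x),\,M(N(x))\}=\mathcal O_u$. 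So it is enough to prove $F(G(x))=G(F(x))$ for an arbitrary $x\in\mathcal O_u$, keeping track of which of these four basis vectors each resulting term is a scalar multiple of. (No well-definedness issue arises: $F$ and $G$ are prescribed on a basis, and they map $\mathcal U$ into itself since $\mathcal O_u$ is closed under $M$ and $N$.)

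Then I would simply expand both sides. Using $G(x)=g_xx+\tilde{g}_xN(x)$, linearity, and $F(N(x))=f_{N(x)}N(x)+\tilde{f}_{N(x)}M(N(x))$, one obtains
\[
F(G(x))=f_xg_x\,x+g_x\tilde{f}_x\,M(x)+\tilde{g}_xf_{N(x)}\,N(x)+\tilde{g}_x\tilde{f}_{N(x)}\,M(N(x)),
\]
and, expanding $G(F(x))=f_xG(x)+\tilde{f}_xG(M(x))$ and using $N(M(x))=M(N(x))$,
\[
G(F(x))=f_xg_x\,x+\tilde{f}_xg_{M(x)}\,M(x)+f_x\tilde{g}_x\,N(x)+\tilde{f}_x\tilde{g}_{M(x)}\,M(N(x)).
\]
The hypotheses that $f,\tilde{f}$ are constant on the $N$-orbits of $\mathcal O_u$ give $f_{N(x)}=f_x$ and $\tilde{f}_{N(x)}=\tilde{f}_x$, and the hypotheses that $g,\tilde{g}$ are constant on the $M$-orbits give $g_{M(x)}=g_x$ and $\tilde{g}_{M(x)}=\tilde{g}_x$; after these substitutions the coefficients of $x$, $M(x)$, $N(x)$ and $M(N(x))$ agree on the two sides, scalars in $\mathbb Z[q]$ commuting. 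Hence $F(G(x))=G(F(x))$ for all $x\in\mathcal O_u$, and therefore $F\circ G=G\circ F$.

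Conceptually, the computation expresses the fact that $F$ commutes with the permutation operator of $\mathcal U$ induced by $N$ (its coefficients being $N$-invariant) and that $G$ commutes with the permutation operator induced by $M$, and that these two permutation operators commute with one another, which is exactly $MN=NM$ on $\mathcal O_u$; writing $F$ and $G$ as sums of diagonal operators composed with these permutations makes $FG=GF$ automatic. I do not expect a genuine obstacle here: the only care needed is the bookkeeping of the Klein-four action on $\mathcal O_u$, the key point being that the two ``mixed'' contributions both land on the single basis vector $M(N(x))=N(M(x))$, and that the stated invariance properties of the four families of coefficients are precisely what makes the remaining three coefficients match.
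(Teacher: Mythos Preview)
Your proof is correct and is essentially identical to the paper's: both simply expand $F(G(x))$ and $G(F(x))$ on a basis element $x\in\mathcal O_u$ and compare the four coefficients using the stated invariance hypotheses and $MN=NM$. One remark: you invoke $\tilde g_{M(x)}=\tilde g_x$, which is exactly what the paper's proof uses as well; the printed hypothesis $\tilde g_x=\tilde g_{N(x)}$ appears to be a typo for $\tilde g_x=\tilde g_{M(x)}$, so your reading matches the intended statement.
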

\begin{proof}
Given  $x\in \mathcal O_u$, we have
 \begin{align*}
  F\circ G(x)&=F((g_xx+\tilde{g}_xN(x))\\
  &= f_xg_x x+f_{N(x)}\tilde{g}_xN(x)+\tilde{f}_xg_xM(x)+\tilde{f}_{N(x)}\tilde{g}_xMN(x).
 \end{align*}
and  similarly 
 \[
  G\circ F(x)= g_xf_x x+g_{M(x)}\tilde{f}_xM(x)+\tilde{g}_xf_xN(x)+\tilde{g}_{M(x)}\tilde{f}_xNM(x).
 \]

The result follows. 
\end{proof}
The combinatorial invariance of Kazhdan-Lusztig polynomials is equivalent to the analogous result for $R$-polynomials (see \cite[Chapter 5]{BB}). $R$-polynomials are polynomials in $\mathbb Z[q]$ indexed by pairs of elements of $W$ and it is well known that they can be computed with a recursive algorithm based on the following facts. If $N$ is a multiplication matching of $w$, then
\[
 R_{u,w}=(q^{c(N,u)}-⁠1)R_{u,N(w)}+q^{c(N,u)}R_{N(u),N(w)},
\]
where \[
 c(N,u)=\begin{cases}
         1&\textrm{if }N(u)\rhd u\\0& \textrm{if }N(u)\lhd u,
        \end{cases}
\]
for all $u<w$. Moreover, $R_{u,u}=1$ for all $u\in W$, and $R_{u,v}=0$ if $u\not\leq v$.

The next result, which was originally proved in \cite{BCM1},  shows that one can substitute $N$ in the previous formula with any special matching $M$, and that in particular $R$-polynomials can be computed from the knowledge of the lower Bruhat interval as an abstract poset.
\begin{thm}
Let  $M$ be a special matching of $w$. If $u\in [e, w]$, then
\[
 R_{u,w}= (q^{c(M,u)}-⁠1)R_{u,M(w)}+q^{c(M,u)}R_{M(u),M(w)},
\]
where
\[
 c(M,u)=\begin{cases}
         1&\textrm{if }M(u)\rhd u\\0& \textrm{if }M(u)\lhd u.
        \end{cases}
\]
\end{thm}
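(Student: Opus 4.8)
The plan is to reduce the statement to the case of a multiplication matching, for which the recursion is already known, by exploiting the commuting multiplication matching $N$ produced by Proposition~\ref{comm} together with the standard recursion for $R$-polynomials that $N$ satisfies. First I would dispose of the trivial cases: if $[e,w]$ is a dihedral interval, then every special matching of $w$ is itself a multiplication matching (a routine check on dihedral groups), and the formula holds by the classical recursion; so from now on we may assume $[e,w]$ is not dihedral and invoke Proposition~\ref{comm} to fix a multiplication matching $N$ with $M(w)\neq N(w)$ and $MN(u)=NM(u)$ for all $u\leq w$.

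Next I would set up the induction on $\ell(w)$, and within that, argue orbit by orbit on the orbits $\mathcal O_u$ of the group $\langle M,N\rangle$ acting on $[e,w]$. The key object is the ``$R$-polynomial operator'': define, for a special or multiplication matching $P$ of $w$, the map $R^{(P)}_w$ on $[e,w]$ by $R^{(P)}_w(u) = (q^{c(P,u)}-1)R_{u,P(w)} + q^{c(P,u)}R_{P(u),P(w)}$, and we must show $R^{(M)}_w(u) = R_{u,w}$ for all $u\leq w$. I would organize this by writing, on the free $\mathbb Z[q]$-module $\mathcal U$ with basis an orbit $\mathcal O_u$ of size $4$, the endomorphisms $F$ and $G$ encoding one step of the $M$-recursion and of the $N$-recursion respectively, each with coefficients of the form $q^{c(P,\cdot)}-1$ and $q^{c(P,\cdot)}$; these coefficients satisfy exactly the compatibility $f_x=f_{N(x)}$, $g_x=g_{M(x)}$, etc., demanded in Lemma~\ref{Fcomm} because $c(M,x)$ depends only on whether $M(x)\lhd x$, and $M$-arrows and $N$-arrows respect each other's up/down status inside a commuting orbit. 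Hence $F\circ G = G\circ F$ by Lemma~\ref{Fcomm}.

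The heart of the argument is then: apply the known $N$-recursion to $R_{u,w}$, obtaining $R_{u,w}$ in terms of $R_{\cdot,N(w)}$-values; since $\ell(N(w)) = \ell(w)-1$, the inductive hypothesis says the $M$-recursion is valid on $[e,N(w)]$ (using the induced special matching on $N(w)$, namely $M$ restricted, which is special because $M$ and $N$ commute and $MN(w)\lhd N(w)$); rewriting those $R_{\cdot,N(w)}$-values via the $M$-recursion and then using the commutativity $F\circ G = G\circ F$ to reorder the two recursions, we land on the $M$-recursion applied to $R_{u,w}$. In operator terms: $R_{u,w}$ is computed by $G$ followed by (inductively) $F$ at level $N(w)$, which equals $F$ followed by $G$ at level $M(w)$, which is precisely the $M$-recursion for $R_{u,w}$. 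One must treat separately the orbits of size $2$ (where $M(u)=N(u)$, so $MN(u)=u$ and the two recursions visibly interlock more simply) and the degenerate positions where $u=w$ or one of $M(w),N(w),MN(w)$ coincides with a boundary of the interval, but these are bookkeeping.

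The main obstacle I expect is making the double recursion genuinely rigorous rather than merely formally suggestive: one must verify that when the $N$-recursion at level $w$ is followed by the $M$-recursion at level $N(w)$, the polynomials that appear are exactly the basis-coefficient pattern of Lemma~\ref{Fcomm} with the stated symmetry constraints, and that the ``$F$ at level $M(w)$'' half of the commuted composite is literally the $M$-recursion for $R_{u,w}$ — which in turn requires knowing $M$ descends to a special matching of $M(w)$ and that $N$ descends to a multiplication matching of $M(w)$ commuting with it, so that the inductive hypothesis applies on the other branch too. Equivalently, the subtlety is a symmetry/well-definedness check: the final value must not depend on whether we peel off $M$ or $N$ first, and Lemma~\ref{Fcomm} is exactly the lemma that guarantees this, provided we have correctly matched the abstract $F,G$ to the concrete $R$-polynomial recursions. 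Once that identification is pinned down, the induction closes.
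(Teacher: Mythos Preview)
Your overall architecture matches the paper's proof almost exactly: induction on $\ell(w)$; invoke Proposition~\ref{comm} to obtain a commuting multiplication matching $N$; encode one step of each recursion as endomorphisms $F,G$ of the free $\mathbb Z[q]$-module on an orbit $\mathcal O_u$; use Lemma~\ref{Fcomm} to get $F\circ G=G\circ F$; then peel off $N$, apply the inductive hypothesis on $N(w)$, swap $F$ and $G$, and peel $N$ back on. The paper packages the last step via the linear functionals $v_z(x)=R_{x,z}$, which is precisely the ``identification'' you flag as the delicate point, but your description of that identification is correct.

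There is, however, one genuine error in your base case. It is \emph{not} true that every special matching of a dihedral interval is a multiplication matching. For instance, take $m_{s,t}\geq 5$ and $w=stst$; the involution pairing $\{e,s\}$, $\{t,ts\}$, $\{st,sts\}$, $\{tst,stst\}$ is a special matching of $[e,w]$, but it equals neither $\lambda_s$ nor $\rho_t$ (the only multiplication matchings available), and $\rho_s,\lambda_t$ are not even matchings of $[e,w]$. So your ``routine check'' fails. The paper does not make this claim; it simply says the dihedral case is easy to verify directly, which it is: in a dihedral interval $R_{u,w}$ depends only on $\ell(w)-\ell(u)$, so the asserted recursion reduces to a one-variable identity among the polynomials $R_k=R_{u,w}$ for $\ell(w)-\ell(u)=k$, and any special matching moves both $u$ and $w$ in ways that preserve this length-difference bookkeeping. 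Fix the base case along these lines and your argument goes through.
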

\begin{proof}
 The result is known if $M$ is a multiplication matching and is easy to prove if $[e,w]$ is a dihedral interval. Otherwise, let $N$ be a multiplication matching of $w$ such that  $M(w)\neq N(w)$ and $MN(u)=NM(u)$ for all $u\leq w$, which exists by  Proposition~\ref{comm}. We consider the endomorphisms $F$ and $G$ of $\mathcal U$ such that, for $x\in \mathcal O_u$,
\[
 F(x)=(q^{c(M,x)}-⁠1)x+q^{c(M,x)}M(x)
\]
and
\[
 G(x)=(q^{c(N,x)}-⁠1)x+q^{c(N,x)}N(x).
\]
If $|\mathcal O_u|=2$ then $F(x)=G(x)$, and if  $|\mathcal O_u|=4$ then $FG=GF$ by Lemma \ref{Fcomm}. In particular, in both cases, we have $F\circ G=G\circ F$.

 For all $z\in W$, we consider the $\mathbb Z[q]$-morphism $v_z \colon\, \mathcal U\rightarrow \mathbb Z[q]$ given by
 \[
  v_z(x)=R_{x,z}
 \]
 for all $x\in \mathcal O_u$ and extended by linearity. The statement is equivalent to
 \begin{equation}\label{main}
  v_w(x)=v_{M(w)}(F(x))
 \end{equation}
 with $x=u$. We prove Eq. \eqref{main} for all $x\in \mathcal U$  by induction on $\ell(w)$. We have
 \begin{align*}
  v_w(x)&=v_{N(w)}(G(x))\\
  &=v_{MN(w)}(FG(x))
 \end{align*}
 where we use in the first line that Eq. \eqref{main} holds for $N$ and $G$ since $N$ is a multiplication matching, and in the second line our induction hypothesis, since $M$ restricts to a special matching of $N(w)$. Now, since $M\circ N=N \circ M$ and $F\circ G=G\circ F$, we can conclude 
 \begin{align*}
  v_w(x)&=v_{NM(w)}(GF(x)\\
  &=v_{M(w)}(F(x))
 \end{align*}
 where we  use again Eq. \eqref{main} for $N$ and $G$.
\end{proof}

\end{document}